\DeclareMathOperator{\C}{\mathcal{C}}
\newtheorem{theorem}{Theorem}[section]
\newtheorem{lemma}[theorem]{Lemma}
\newtheorem{corollary}[theorem]{Corollary}
\newtheorem{definition}[theorem]{Definition}
\newtheorem{proposition}[theorem]{Proposition}
\newtheorem{example}[theorem]{Example}
\newtheorem{remark}[theorem]{Remark}
\newtheorem{notation}[theorem]{Notation}
\newcommand{\fqn}{\mathbb{F}_{q^n}}
\newcommand{\fqk}{\mathbb{F}_{q^k}}
\newcommand{\cL}{{\mathcal L}}
\newcommand{\F}{{\mathbb F}}
\newcommand{\fq}{{\mathbb F}_{q}}
\newcommand{\lmb}{\lambda}
\newcommand{\N}{\mathrm{N}}
\title{Constructions and equivalence of Sidon spaces}
\author{Chiara Castello, Olga Polverino, Paolo Santonastaso and 
Ferdinando Zullo\thanks{The research was supported by the project ``VALERE: VAnviteLli pEr la RicErca" of the University of Campania ``Luigi Vanvitelli'' and was partially supported by the Italian National Group for Algebraic and Geometric Structures and their Applications (GNSAGA - INdAM).}}
\date{}
\begin{document}
\maketitle

\begin{abstract}
Sidon spaces have been introduced by Bachoc, Serra and Z\'emor in 2017 as the $q$-analogue of Sidon sets. The interest on Sidon spaces has increased quickly, especially after the work of Roth, Raviv and Tamo in 2018, in which they highlighted the correspondence between Sidon spaces and cyclic subspace codes.  
Up to now, the known constructions of Sidon Spaces may be divided in three families: the ones contained in the sum of two multiplicative cosets of a fixed subfield of $\mathbb{F}_{q^n}$, the ones contained in the sum of more than two multiplicative cosets of a fixed subfield of $\mathbb{F}_{q^n}$ and finally the ones obtained as the kernel of subspace polynomials. 
In this paper we will mainly focus on the first class of examples, for which we provide characterization results and we will show some new examples, arising also from some well-known combinatorial objects.
Moreover, we will give a quite natural definition of equivalence among Sidon spaces, which relies on the notion of equivalence of cyclic subspace codes and we will discuss about the equivalence of the known examples.
\end{abstract}

{\textbf{Keywords}: Sidon space; cyclic subspace code; linearized polynomial; $q$-analog}\\

{\textbf{MSC2020}: Primary 11T99; 11T06. Secondary   11T71; 94B05.}

\section{Introduction}

Bachoc, Serra and Z\'emor in \cite{bachoc2017analogue} introduced the notion of Sidon space as an important object to study in order to prove the linear analogue of Vosper's Theorem, which characterizes the equality in the linear analogue of Cauchy-Davenport inequality proved in \cite{bachoc2018revisiting,hou2002generalization}.
Sidon spaces can be seen as the $q$-analog of Sidon sets, a well-studied combinatorial object introduced by Simon Szidon, see \cite{erdHos1981solved}.
The definition of Sidon space is the following.
An $\fq$-subspace $V$ of $\fqn$ is called a \textbf{Sidon space} if for all nonzero $a,b,c,d \in V$ such that $ab=cd$ then 
\[\{a \F_q,b \F_q\}=\{c \F_q,d \F_q\},\]
where $e\fq =\{e\lambda \colon \lambda \in \fq\}$.

Sidon spaces gained a lot of attention especially because of their connection with cyclic subspace codes, pointed out by Roth, Raviv and Tamo in \cite{roth2017construction}. Indeed, this connection allowed them to prove, for most of the cases, a conjecture on the existence of cyclic subspace codes by Horlemann-Trautmann, Manganiello, Braun and Rosenthal in \cite{trautmann2013cyclic}.

Let $n$ and $k$ be positive integers such that $k \leq n$. We denote by $\mathcal{G}_q(n,k)$ the set of $k$-dimensional $\F_q$-subspaces of $\F_{q^n}$. For $U \in \mathcal{G}_q(n,k)$ and $\alpha \in \fqn$, denote by $\mathrm{Orb}(U)=\{\alpha U \colon \alpha \in \fqn^*\}$ the \textbf{orbit} of $U$, where $\alpha U=\{\alpha u \colon u \in U\}$. 
We can equip $\mathcal{G}_q(n,k)$ with the \textbf{subspace metric}, that is 
\[d(U,V)=\dim_{\F_q}(U)+\dim_{\F_q}(V)-2\dim_{\F_q}(U \cap V),\]
where $U,V \in \mathcal{G}_{q}(n,k)$.
A \textbf{constant dimension subspace code} as a subset $\mathcal{C}$ of $\mathcal{G}_{q}(n,k)$ endowed with the subspace metric.
In particular, its minimum distance $d(\C)$ is the minimum of distances between two distinct elements in $\C$.
Codes in the subspace metric have been investigated especially after the well-celebrated paper of Koetter and Kschischang \cite{koetter2008coding}, in which they showed how to use such codes in random network coding.
Because of their algebraic structure, the most investigated class is those of \textbf{cyclic subspace codes}, originally introduced by Etzion and Vardy in \cite{etzion2011error}.
A constant dimension subspace code $\mathcal{C} \subseteq \mathcal{G}_q(n,k)$ is said to be \textbf{cyclic} if for every $\alpha \in \F_{q^n}^*$ and every $V \in C$ then $\alpha V \in C$, that is $\mathcal{C}$ is the union of orbit of subspaces in $\mathcal{G}_q(n,k)$.

The following result establishes a correspondence between Sidon spaces and cyclic subspace codes.

\begin{theorem}\cite[Lemma 34]{roth2017construction}\label{lem:charSidon}
Let $U \in \mathcal{G}_q(n,k)$.
Then $U$ is a Sidon space if and only if  $|\mathrm{Orb}(U)|=\frac{q^n-1}{q-1}$ and $d(\mathrm{Orb}(U))=2k-2$, i.e. for all $\alpha \in \mathbb{F}_{q^n}\setminus \fq$
\[\dim_{\fq}(U\cap \alpha U)\leq 1.\]
\end{theorem}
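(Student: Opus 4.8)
The plan is to route everything through the single geometric condition that $\dim_{\mathbb{F}_q}(U\cap\alpha U)\le 1$ for every $\alpha\in\mathbb{F}_{q^n}\setminus\mathbb{F}_q$, showing separately that this condition is equivalent to the pair of code-theoretic conditions on $\mathrm{Orb}(U)$ (which is the content of the reformulation in the statement) and that it is equivalent to $U$ being a Sidon space. Throughout one may assume $k\ge 2$, the case $k=1$ being degenerate since every $1$-dimensional $\mathbb{F}_q$-subspace is trivially a Sidon space.

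First I would translate to the orbit data. The set $\{\alpha\in\mathbb{F}_{q^n}^*:\alpha U=U\}\cup\{0\}$ is a subfield $\mathbb{F}_{q^t}$ of $\mathbb{F}_{q^n}$ over which $U$ is a vector space, so $t\mid\gcd(n,k)$ and $|\mathrm{Orb}(U)|=(q^n-1)/(q^t-1)$; hence $|\mathrm{Orb}(U)|=(q^n-1)/(q-1)$ precisely when $t=1$, that is, when $\alpha U\ne U$ for all $\alpha\notin\mathbb{F}_q$. Since $\dim_{\mathbb{F}_q}\alpha U=k$ we have $d(U,\alpha U)=2k-2\dim_{\mathbb{F}_q}(U\cap\alpha U)$, and because $d(\alpha U,\beta U)=d(U,\beta\alpha^{-1}U)$ the minimum distance of $\mathrm{Orb}(U)$ equals $\min\{2k-2\dim_{\mathbb{F}_q}(U\cap\gamma U):\gamma U\ne U\}$. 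Putting these together, the conjunction $|\mathrm{Orb}(U)|=(q^n-1)/(q-1)$ and $d(\mathrm{Orb}(U))=2k-2$ is equivalent to the condition $\dim_{\mathbb{F}_q}(U\cap\alpha U)\le 1$ for all $\alpha\notin\mathbb{F}_q$; for the forward direction one also records that these intersections cannot all be trivial, since $U\cap\alpha U=\{0\}$ for every $\alpha\notin\mathbb{F}_q$ would force all nonzero vectors of $U$ to be $\mathbb{F}_q$-proportional, i.e.\ $k=1$.

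The substantive part is the equivalence between the Sidon property and the condition $\dim_{\mathbb{F}_q}(U\cap\alpha U)\le 1$ for all $\alpha\notin\mathbb{F}_q$. From the intersection bound to the Sidon property: given nonzero $a,b,c,d\in U$ with $ab=cd$, set $\alpha:=ac^{-1}$, which also equals $db^{-1}$, so that $a=\alpha c$ and $d=\alpha b$ both lie in $U\cap\alpha U$. If $\alpha\in\mathbb{F}_q$ then $a\mathbb{F}_q=c\mathbb{F}_q$ and $b\mathbb{F}_q=d\mathbb{F}_q$ at once; if $\alpha\notin\mathbb{F}_q$ then $U\cap\alpha U$ has dimension exactly $1$ and contains the nonzero vectors $a$ and $d$, so $a=\mu d$ for some $\mu\in\mathbb{F}_q^*$, and substituting $a=\alpha c$, $d=\alpha b$ gives $c=\mu b$; in either case $\{a\mathbb{F}_q,b\mathbb{F}_q\}=\{c\mathbb{F}_q,d\mathbb{F}_q\}$, so $U$ is a Sidon space. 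Conversely, by contraposition, if $\dim_{\mathbb{F}_q}(U\cap\alpha U)\ge 2$ for some $\alpha\notin\mathbb{F}_q$, pick $\mathbb{F}_q$-independent $x,y\in U\cap\alpha U$ and write $x=\alpha a$, $y=\alpha b$ with $a,b\in U$; then $a,b$ are also nonzero and $\mathbb{F}_q$-independent, and $xb=\alpha ab=ay$ is a product relation among nonzero elements of $U$. The Sidon property would give $\{x\mathbb{F}_q,b\mathbb{F}_q\}=\{a\mathbb{F}_q,y\mathbb{F}_q\}$; but $x\mathbb{F}_q=y\mathbb{F}_q$ contradicts the independence of $x,y$, whereas $x\mathbb{F}_q=a\mathbb{F}_q$ forces $\alpha\in\mathbb{F}_q$, so no such $\alpha$ exists.

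I do not expect a genuine obstacle: the whole argument is a careful unfolding of the definitions, with the product relation $ab=cd$ rewritten as the membership $a,d\in U\cap\alpha U$ for $\alpha=ac^{-1}$. The points needing care are the case analysis at the end (checking that each of the two ways the set equality $\{x\mathbb{F}_q,b\mathbb{F}_q\}=\{a\mathbb{F}_q,y\mathbb{F}_q\}$ can hold yields a contradiction), the exclusion of the degenerate case $k=1$ in the orbit/distance reformulation, and the verification that the minimum distance of $\mathrm{Orb}(U)$ equals exactly $2k-2$ rather than merely being bounded below by it.
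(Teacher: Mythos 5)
The paper does not prove this statement at all: it is quoted verbatim from Roth--Raviv--Tamo \cite[Lemma 34]{roth2017construction}, so there is no in-paper argument to compare against. Your proof is correct and self-contained: the orbit-stabilizer/subfield argument for $|\mathrm{Orb}(U)|$, the reduction of the minimum distance to $\min_\gamma\bigl(2k-2\dim_{\fq}(U\cap\gamma U)\bigr)$, and the two-way translation between $ab=cd$ and membership of $a,d$ in $U\cap\alpha U$ with $\alpha=ac^{-1}$ are exactly the standard route, and you handle the genuinely delicate points (attainment of the value $2k-2$, the degenerate case $k=1$, and the case split on $\{x\fq,b\fq\}=\{a\fq,y\fq\}$) correctly.
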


Note that $|\mathrm{Orb}(U)|=\frac{q^n-1}{q-1}$ is equivalent to require that $U$ is strictly $\fq$-linear; see \cite[Theorem 1]{otal2017cyclic}.

Therefore, equivalently we can talk about $k$-dimensional Sidon spaces or optimal one-orbit cyclic subspace codes with minimum distance $2k-2$.

In this paper we will use the terminology of Sidon spaces and our aim is to show new constructions of such spaces and to give some insight on the equivalence of these objects. All of our results may be read in terms of cyclic subspace codes via the correspondence given in Theorem \ref{lem:charSidon}.

Known examples of Sidon spaces can be divided in three families, based on their descriptions:
\begin{enumerate}
    \item as subspace of the sum of two multiplicative cosets\footnote{In this paper, a multiplicative coset of a subfield $\fqk$ of $\fqn$ is $\alpha\fqk$, for some $\alpha \in \fqn$.} of a fixed subfield of $\mathbb{F}_{q^n}$;
    \item as subspace of the sum of more than two multiplicative cosets of a fixed subfield of $\mathbb{F}_{q^n}$;
    \item as kernel of subspace polynomials.
\end{enumerate}

In the first family we find the examples in \cite[Constructions 11 and 15]{roth2017construction}, \cite[Lemma 3.1]{zhang2023constructions}, \cite[Lemmas 4.1 and 4.2]{li2021cyclic} and \cite[Lemma 2.4]{zhang2023further}, in the second one we find examples in \cite[Theorems 3.2 and 3.6]{zhang2022new}, \cite[Lemma 3.3, Theorems 3.5 and 3.13]{zhang2023constructions}, \cite[Theorems 4.5, 4.7, 4.9, 4.13-18 and 4.20]{li2021cyclic} and \cite[Lemma 5 and Theorem 6]{zhang2022constructions}, and for the last family we can find examples in \cite[Corollary 4 and Section IIIA]{ben2016subspace} and \cite[Section 7.3]{santonastaso2022linearized}.

We will deal with Sidon spaces which belong to the first family, that is $k$-dimensional Sidon spaces of $\mathbb{F}_{q^{k\ell}}$ contained in $\mathbb{F}_{q^k}+\gamma\mathbb{F}_{q^k}$, for some $\gamma \in \mathbb{F}_{q^{k\ell}}\setminus\fqk$.
We will first give a characterization result and we will also obtain a description with the aid of linearized polynomials. As a matter of fact, we will show how to construct examples from \emph{scattered polynomials}. We will also show examples of linearized polynomials defining Sidon spaces, which are not scattered. 
Moreover, we will also investigate the direct sum of Sidon spaces, already exploited in \cite{li2021cyclic}, in order to construct Sidon spaces in higher degree field extensions with less assumptions. This will give new examples of Sidon spaces.
Finally, we will deal with the notion of equivalence of Sidon spaces, following the approach used in \cite{zullo2023multi}.
Indeed, since many more examples of Sidon spaces are emerging, it is important to give a definition of equivalence for them, which can naturally arise from the classification of isometries for cyclic subspace codes established by Gluesing-Luerssen and Lehmann in \cite{gluesing2021automorphism}. 
Thanks to the above equivalence, we can see how all the known examples of the first family arise from a particular scattered polynomial. As a consequence, we will show that the family of examples we found is very large and contains many non equivalent examples.
We will then prove that some of our examples cannot be obtained as the kernel of a subspace trinomial and hence they cannot be obtained from the known examples of the third family.

\section{Linearized polynomials and scattered polynomials}

A \textbf{linearized polynomial} (or $q$-\textbf{polynomial}) is a polynomial of the shape
$$ f(x)=\sum_{i=0}^{t}f_i x^{q^i}, \quad f_i \in \F_{q^k}.$$
If $f$ is nonzero, the $q$-\textbf{degree} of $f$ will be the maximum $i$ such that $f_i \neq 0$.

The set of linearized polynomials forms an $\F_q$-algebra with the usual addition between polynomials and the composition, defined by
$$ (f_ix^{q^i})\circ (g_jx^{q^j})=f_ig_j^{q^i}x^{q^{i+j}}, $$
on $q$-monomials, and then extended by associativity and distributivity and the multiplication by elements of $\F_q$. We denote this $\F_q$-algebra by $\cL_{k,q}$.
The elements of the quotient algebra $\tilde{\mathcal{L}}_{k,q}$ obtained from $\mathcal{L}_{k,q}$  modulo the two-sided ideal $(x^{q^k}-x)$ are represented by linearized polynomials of $q$-degree less than $k$, i.e.
$$ \tilde{\mathcal{L}}_{k,q} = \left\{\sum_{i=0}^{k-1}f_i x^{q^i}, \quad f_i \in \F_{q^k} \right\}.$$
It is well-known that the following isomorphism as $\F_q$-algebra holds
\[ (\tilde{\cL}_{k,q},+,\circ)\cong(\mathrm{End}_{\fq}(\F_{q^k}),+,\circ),
\] 
where the linearized polynomial $f$ is identified with the endomorphism of $\F_{q^k}$ defined by its evaluation map.

Thanks to the above isomorphism, we can identify a linearized polynomial with the endomorphism it defines and so we will speak of \emph{kernel} and \emph{rank} of a $q$-polynomial $f$ meaning by this the kernel and rank of the corresponding endomorphism, denoted by $\ker(f)$ and $\mathrm{rk}(f)$, respectively. 

Consider
\[f(x)=a_0x+a_1x^{q^s}+\cdots+a_{k-1}x^{q^{s(k-1)}}+a_kx^{q^{sk}}\in \mathcal{L}_{n,q},\]
with $k\leq n-1$, $\gcd(s,k)=1$ and let $a_k\ne 0$. Then 
\begin{equation}\label{eq:boundnumroots} 
\dim_{\F_q}(\ker (f))\leq k. 
\end{equation}
When the equality holds, we say that $f$ is a \textbf{subspace polynomial}.
It is clear that every $k$-dimensional $\fq$-subspace of $\fqn$ is the kernel of a unique monic subspace polynomial of $q$-degree $k$.

We refer to \cite{lidl1997finite} for more details on this class of polynomials.

In connection with optimal codes in the rank metric, Sheekey in \cite{sheekey2016new} introduced the notion of scattered polynomials.

\begin{definition}
A polynomial $f \in {\mathcal{L}}_{k,q}$ is said to be \textbf{scattered} if for any $a,b \in \mathbb{F}_{q^k}^*$ such that $f(a)/a=f(b)/b$ implies that $a$ and $b$ are $\fq$-proportional.
\end{definition}

If we denote by $U_f=\{(x,f(x))\colon x \in \F_{q^k}\}\subseteq \mathbb{F}_{q^k}^2$ the $k$-dimensional $\fq$-subspace defined by the graph of $f$, the polynomial $f$ turns out to be scattered if for any $a \in \mathbb{F}_{q^k}^*$ we have
\begin{equation}\label{eq:scattcondSa,f} 
\dim_{\fq}(S_{a,f})= 1, 
\end{equation}
where $S_{a,f}=\{ \rho\in \F_{q^k}\colon \rho(a,f(a)) \in U_f \}$, i.e. $S_{a,f}=\fq$.

\section{Characterization of Sidon spaces}

Let $\gamma\in\fqn$ be a root of an irreducible polynomial of degree $\frac{n}{k}\geqslant 2$ over $\fqk$, i.e. $\fqn=\fqk(\gamma)$ and $[\F_{q^n}:\F_{q^k}]=\frac{n}{k}$.\\
Let $U$ be an $\fq$-subspace of $\fqk^2$ of dimension $m$ over $\fq$ and let define 
\[V_{U,\gamma}=\lbrace u+v\gamma : (u,v)\in U\rbrace \in\mathcal{G}_q(n,m).\]
For any $(u,v)\in U$, define
\[S_{(u,v)}^\gamma=\lbrace \lmb\in\fqn | \lmb(u+v\gamma)\in V_{U,\gamma}\rbrace\subseteq \fqn.\]
It is easy to see that $S_{(u,v)}^\gamma$ is an $\fq$-subspace of $\fqn$ containing $\fq$.

\begin{definition}
Let $U$ be an $\fq$-subspace of $\fqk^2$ of dimension $m$ over $\fq$ and let $\gamma\in\fqn$ be a root of an irreducible polynomial of degree $\frac{n}{k}\geqslant 2$ over $\fqk$. We say that $U$ has the \textbf{Sidon space property with respect to $\gamma$} if
for every $\fq$-linearly independent $(u,v),(u',v')\in U$ we have that \[S_{(u,v)}^\gamma\cap S_{(u',v')}^\gamma=\fq.\]
\end{definition}

Our aim is to characterize the Sidon space property of a subspace of the form $V_{U,\gamma}$ via the property above defined on $U$. In the next examples we see that this property is strongly related also to $\gamma$ and not only to $U$.

\begin{example}\label{ex:q=3noSidon}
Consider the extension $\mathbb{F}_{3^8}/\mathbb{F}_{3^4}$. Let $\mathbb{F}_{3^4}=\mathbb{F}_{3}(g_1)$ and $\mathbb{F}_{3^8}=\mathbb{F}_{3^4}(\gamma)$, where $g_1\in\mathbb{F}_{3^4}$ has $x^4-x^3-1$ as minimal polynomial over $\mathbb{F}_{3}$ and $\gamma \in \mathbb{F}_{3^8}$ has $x^2+g_1^{35}x+g_1$ as minimal polynomial over $\mathbb{F}_{3^4}$. Note that $\mathbb{F}_{3^8}=\mathbb{F}_{3^4}(\gamma)=\mathbb{F}_{3^4}(\gamma^2)$.\\
Denoting by
\[U=\{(x,x^3) \colon x \in \mathbb{F}_{3^4}\},\]
then $V_{U,\gamma}=\lbrace x+x^3\gamma\colon x\in\mathbb{F}_{3^4}\rbrace$ is a Sidon space in $\mathbb{F}_{3^8}$, whereas $V_{U,\gamma^2}=\lbrace x+x^3\gamma^2\colon x\in\mathbb{F}_{3^4}\rbrace$ is not a Sidon space.
\end{example}

We are going to give a characterization of Sidon spaces which are contained in the sum of two multiplicative cosets of $\mathbb{F}_{q^k}$ via the Sidon space property.

\begin{theorem}
\label{caratterizzazioneSidonCoppie}
Let $\gamma\in\fqn$ be a root of an irreducible polynomial of degree $\frac{n}{k}\geqslant 2$ over $\fqk$. Let $U$ be an $\fq$-subspace of $\fqk^2$ of dimension $m$ over $\fq$ 
and let consider $V=V_{U,\gamma}\in\mathcal{G}_q(n,m)$.  
Then $V$ is a Sidon space if and only if $U$ has the Sidon space property with respect to $\gamma$.
\end{theorem}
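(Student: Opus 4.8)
The strategy is to unwind both conditions into statements about the $\F_q$-subspaces $S^\gamma_{(u,v)}$ and then match them up. Recall from Theorem~\ref{lem:charSidon} that $V = V_{U,\gamma}$ is a Sidon space precisely when $\dim_{\F_q}(V \cap \alpha V) \le 1$ for every $\alpha \in \fqn \setminus \fq$ (together with strict $\F_q$-linearity, which I will address separately). The first step is to translate the intersection condition into the language of the sets $S^\gamma_{(u,v)}$. An element $w \in V \cap \alpha V$ nonzero means $w = u + v\gamma$ for some $(u,v) \in U \setminus \{0\}$ and $w = \alpha(u' + v'\gamma)$ for some $(u',v') \in U \setminus \{0\}$; equivalently, $\alpha \in S^\gamma_{(u',v')}$ in a suitable sense — more precisely, $\alpha^{-1}$ scales $w \in V$ back into $V$. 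So I would set up the bookkeeping carefully: for fixed nonzero $(u',v') \in U$, write $w = u' + v'\gamma$ and note that the set of $\alpha$ with $\alpha w \in V$ is exactly $S^\gamma_{(u',v')}$, and for such $\alpha$, the preimage $\alpha w$ lies in $V$, hence equals $u + v\gamma$ for a unique $(u,v) \in U$.

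The second step is the heart of the argument: show that $\dim_{\F_q}(V \cap \alpha V) \ge 2$ for some $\alpha \notin \fq$ if and only if there exist $\F_q$-linearly independent $(u,v), (u',v') \in U$ with $S^\gamma_{(u,v)} \cap S^\gamma_{(u',v')} \supsetneq \fq$. For the forward direction, if $\dim_{\F_q}(V \cap \alpha V) \ge 2$, pick two $\F_q$-independent elements $w_1 = u_1 + v_1\gamma$, $w_2 = u_2 + v_2\gamma$ in $V \cap \alpha V$; then $(u_1,v_1),(u_2,v_2)$ are $\F_q$-independent in $U$ (independence is preserved because the map $(u,v) \mapsto u + v\gamma$ is an $\F_q$-isomorphism $\fqk^2 \supseteq U \to V$, using that $1,\gamma$ are $\fqk$-independent hence $\F_q$-independent modulo the decomposition), and the common scalar $\alpha^{-1}$ (or $\alpha$, depending on orientation) witnesses that $\alpha \in S^\gamma_{(u_i,v_i)}$ for $i=1,2$ — so $\alpha$ is a common element of the two $S^\gamma$'s not in $\fq$. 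Conversely, if $\mu \in S^\gamma_{(u,v)} \cap S^\gamma_{(u',v')} \setminus \fq$ with $(u,v),(u',v')$ independent, then $w := u+v\gamma$ and $w' := u'+v'\gamma$ are $\F_q$-independent elements of $V$ with $\mu w, \mu w' \in V$, i.e. $w, w' \in \mu^{-1} V \cap V =: \alpha V \cap V$, giving a $2$-dimensional intersection for $\alpha = \mu^{-1} \notin \fq$.

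The third step is to handle strict $\F_q$-linearity. Here I would argue that the Sidon space property forces $V$ to be strictly $\F_q$-linear: if $V$ had an $\fq$-linear structure over a larger field, there would be an element $\alpha \in \fqn \setminus \fq$ with $\alpha V = V$, so $S^\gamma_{(u,v)} \supseteq \F_q(\alpha) \supsetneq \fq$ for every $(u,v)\in U$; picking any two independent $(u,v),(u',v')$ (possible as soon as $m \ge 2$; the case $m \le 1$ is trivial) contradicts the Sidon space property. Combining the three steps with Theorem~\ref{lem:charSidon} gives the equivalence.

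The main obstacle I anticipate is the careful handling of the direction of scaling and the uniqueness of the decomposition $w = u + v\gamma$: one must check that an element of $V$ determines $(u,v) \in U$ uniquely, which uses that $\{1,\gamma\}$ is linearly independent over $\fqk$ (given, since $\gamma$ has degree $\ge 2$ over $\fqk$) and hence that $\fqk + \fqk\gamma$ is a direct sum inside $\fqn$ — so $U \hookrightarrow \fqk \oplus \fqk\gamma \to \fqn$ is injective and preserves $\F_q$-independence. The rest is essentially definitional unwinding, but the equivalence "$\dim(V\cap\alpha V)\le 1$ for all $\alpha\notin\fq$" $\iff$ "no two independent pairs share a common non-$\F_q$ scalar" must be stated with care about quantifier order.
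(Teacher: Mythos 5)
Your proof is correct, but it takes a genuinely different route from the paper's. You run everything through Theorem~\ref{lem:charSidon}: you translate the condition $\dim_{\F_q}(V\cap\alpha V)\le 1$ for all $\alpha\in\fqn\setminus\fq$ into the statement that no two $\F_q$-independent pairs of $U$ share a scalar outside $\fq$ (the orientation issue you flag resolves as $\mu\leftrightarrow\alpha^{-1}$, and the injectivity of $(u,v)\mapsto u+v\gamma$ is exactly where the hypothesis $[\F_{q^k}(\gamma):\F_{q^k}]\ge 2$ enters), and you separately dispose of the orbit-size condition by noting that a stabilizer larger than $\fq^*$ would sit inside every $S_{(u,v)}^\gamma$ and hence violate the Sidon space property as soon as $m\ge 2$. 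The paper instead argues directly from the multiplicative definition: in one direction it manufactures a quadruple $a=wc$, $b$, $c$, $d=wb$ with $ab=cd$ from a common scalar $w$; in the other it extracts $\lambda=a/c=d/b\notin\fq$ from a violating quadruple, observes that $\lambda$ lies in $S_{(u',v')}^\gamma\cap S_{(t,z)}^\gamma$ for the pairs underlying $b$ and $c$, and disposes of the case where those two pairs are proportional by showing $\mu=c/b$ would then lie in $\fq$. Your version buys a cleaner quantifier structure and makes the strict-$\F_q$-linearity point explicit, which the paper never needs because it works with the definition rather than the characterization; the paper's version is self-contained and avoids leaning on the cited lemma. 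Both arguments are sound.
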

\begin{proof}
Suppose that $V$ is a Sidon space and, by contradiction, suppose that there exist $(u,v),(u',v')\in U \setminus\{(0,0)\}$ which are $\fq$-linearly independent
and there exists $w\in (S_{(u,v)}^\gamma\cap S_{(u',v')}^\gamma)\setminus\fq$. Since $w\in S_{(u,v)}^\gamma\cap S_{(u',v')}^\gamma$, we have that
\begin{center}
$w (u+v\gamma)=wu+wv\gamma\in V$\\
$w (u'+v'\gamma)=wu'+wv'\gamma\in V$.
\end{center}
Now, let $a=wu+wv\gamma,b=u'+v'\gamma,c=u+v\gamma,d=w u'+w v'\gamma$. Then $a,b,c,d$ are nonzero elements of $V$ and it results
\begin{equation}
\label{sidondefinition}
ab=cd.
\end{equation}
Since $w \notin \fq$ and $(u,v)$ and $(u',v')$ are not $\fq$-proportional, we get $\lbrace a\fq, b\fq\rbrace\neq\lbrace c\fq, d\fq\rbrace$, a contradiction.
Conversely, suppose that for every $(u,v),(u',v')\in U$ such that $(u,v),(u',v')$ are not $\fq$-proportional we have that $S_{(u,v)}^\gamma\cap S_{(u',v')}^\gamma=\fq$ and by contradiction suppose that $V$ is not a Sidon space, i.e. there exist $a=u+v\gamma, b=u'+v'\gamma, c=t+z\gamma, d=t'+z'\gamma \in V$ distinct nonzero elements of $V$ such that
 \[ab=cd\]
and $\lbrace a\fq, b\fq\rbrace\neq\lbrace c\fq, d\fq\rbrace$. As a consequence, we have that
\begin{equation}
\label{lambda}
\frac{a}{c}=\frac{u+v\gamma}{t+z\gamma}=\frac{t'+z'\gamma}{u'+v'\gamma}=\frac{d}{b}=\lmb\in\fqn\setminus\fq
\end{equation}
and
\begin{equation}
\label{mu}
   \frac{a}{d}=\frac{u+v\gamma}{t'+z'\gamma}=\frac{t+z\gamma}{u'+v'\gamma}=\frac{c}{b}=\mu\in\fqn\setminus\fq. 
\end{equation}
Since $u+v\gamma=\lmb(t+z\gamma)\in V$ and $t'+z'\gamma=\lmb (u'+v'\gamma)\in V$, then $\lmb\in (S_{(u',v')}^\gamma\cap S_{(t,z)}^\gamma)\setminus\fq$. 
Note that this is possible, by hypothesis, if and only if $(u',v')$ and $(t,z)$ are $\fq$-proportional, that is there exists $\eta \in \fq^*$ such that $(u',v')=\eta (t,z)$.
Then we have $\frac{c}{b}=\frac{t+z\gamma}{u'+v'\gamma}=\mu=\frac{1}{\eta}\in\fq$ and this is not possible. 
\end{proof}

If the degree $[\F_{q^k}(\gamma):\F_{q^k}]$ is greater than two, then the Sidon space property of $U$ does not depend on $\gamma$ and we can replace the subspaces $S_{(u,v)}^{\gamma}$ of $\fqn$ by $S_{(u,v)}$, which are subspaces of $\fqk$ and are independent from $\gamma$.

\begin{theorem}\label{thm:charwithNOgamma}
Let $U$ be an $\fq$-subspace of $\fqk^2$ 
and let $\gamma$ be a root of an irreducible polynomial over $\fqk$ of degree greater than two.
Then $U$ has the Sidon space property with respect to $\gamma$ if and only if
\begin{equation}\label{eq:withNOgamma}
    S_{(u,v)}\cap S_{(u',v')}=\fq,
\end{equation}
for any $\fq$-linearly independent $(u,v),(u',v')\in U$, where
\[S_{(u,v)}=\{\lambda \in \mathbb{F}_{q^k} \colon \lambda (u,v) \in U\}.\]
\end{theorem}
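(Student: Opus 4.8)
The plan is to relate, for a fixed pair $(u,v)\in U\setminus\{(0,0)\}$, the $\fqn$-subspace $S_{(u,v)}^\gamma$ to the $\fqk$-subspace $S_{(u,v)}$, using crucially that $[\F_{q^k}(\gamma):\F_{q^k}]\ge 3$. First I would unwind the definition: $\lambda\in S_{(u,v)}^\gamma$ means $\lambda(u+v\gamma)=u_1+v_1\gamma$ for some $(u_1,v_1)\in U$. Writing $\lambda = \lambda_0+\lambda_1\gamma+\dots+\lambda_{d-1}\gamma^{d-1}$ in the $\fqk$-basis $1,\gamma,\dots,\gamma^{d-1}$ of $\fqn$ (with $d=n/k\ge 3$) and expanding $\lambda(u+v\gamma)$, the product lies in $\fqk+\fqk\gamma$ only if the coefficients of $\gamma^2,\dots,\gamma^{d}$ all vanish; since $d\ge 3$, the coefficient of $\gamma^2$ already forces $\lambda_2 u + (\text{lower terms})$ conditions, and more importantly the top coefficient (of $\gamma^{d}$, after reducing $\gamma^{d}$ via the minimal polynomial, or of $\gamma^{d-1}\cdot\gamma$ before reduction) forces $\lambda_{d-1}v=0$, etc. Carrying this out carefully I expect to obtain that $\lambda\in S_{(u,v)}^\gamma$ forces $\lambda\in\fqk$, and then the condition $\lambda(u+v\gamma)\in V_{U,\gamma}$ with $\lambda\in\fqk$ says exactly $(\lambda u,\lambda v)\in U$, i.e. $\lambda\in S_{(u,v)}$. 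Thus $S_{(u,v)}^\gamma = S_{(u,v)}$ as subsets of $\fqn$, for every $(u,v)\in U\setminus\{(0,0)\}$.

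Granting this identification, the theorem is immediate: $U$ has the Sidon space property with respect to $\gamma$ means $S_{(u,v)}^\gamma\cap S_{(u',v')}^\gamma=\fq$ for all $\fq$-independent $(u,v),(u',v')\in U$, which by the identification is literally the condition $S_{(u,v)}\cap S_{(u',v')}=\fq$ of \eqref{eq:withNOgamma}. So the content is entirely in the claim $S_{(u,v)}^\gamma=S_{(u,v)}$.

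The main obstacle is the bookkeeping in showing $\lambda\in S_{(u,v)}^\gamma\Rightarrow\lambda\in\fqk$ when $d=n/k$ may be larger than $3$. The cleanest way is probably not to expand in the full basis but to argue degree-theoretically: if $\lambda(u+v\gamma)=u_1+v_1\gamma$ with $u_1,v_1\in\fqk$, then $\lambda = (u_1+v_1\gamma)/(u+v\gamma)$. If $v\ne 0$ and $v_1\ne 0$, then $\lambda\notin\fqk$ would put $\fqk(\lambda)\subseteq\fqk(\gamma)$ with $\fqk(\gamma)=\fqk(\lambda)$ only when $[\fqk(\lambda):\fqk]=d$; but $\lambda\in\fqk(\gamma)$ and $(u+v\gamma)\lambda=u_1+v_1\gamma$ shows $\gamma$ satisfies a polynomial of degree at most $2$ over $\fqk(\lambda)$, namely $v\gamma^2 + \dots$ — wait, more directly: $\lambda(u+v\gamma)-u_1-v_1\gamma=0$ is an $\fqk(\lambda)$-linear relation among $1,\gamma$, so either it is trivial (giving $\lambda v = v_1$ and $\lambda u = u_1$, hence $\lambda = v_1/v\in\fqk$ when $v\ne 0$) or $\gamma\in\fqk(\lambda)$ with $[\fqk(\lambda):\fqk]\le 2 < d$, contradicting $\fqk(\gamma)\supseteq\fqk(\lambda)$ having degree dividing $d$ — this still needs care since $[\fqk(\lambda):\fqk]$ only divides $d$, it need not be $\le 2$. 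The correct finish: from $\lambda(u+v\gamma)=u_1+v_1\gamma$ we get that $\{1,\gamma\}$ is $\fqk(\lambda)$-linearly dependent, so $\gamma\in\fqk(\lambda)$, whence $\fqk(\gamma)\subseteq\fqk(\lambda)\subseteq\fqk(\gamma)$, forcing $\fqk(\lambda)=\fqk(\gamma)$, so $\lambda\in\fqk(\gamma)$ and $\gamma=(\text{something})\in\fqk(\lambda)$ — and now the relation $\lambda v\gamma = u_1+v_1\gamma-\lambda u$ expresses $\gamma$ as a degree-$1$ element over $\fqk(\lambda)$, consistent; the real contradiction comes instead from handling the cases $v=0$ or $v_1=0$ separately (then $\lambda\in\fqk$ trivially when $v=0,u\ne0$; and if $v\ne0,v_1=0$ then $\lambda v\gamma=u_1-\lambda u$ gives $\gamma\in\fqk(\lambda)$, and then $\lambda=u_1/(u+v\gamma)$ combined with $\gamma\in\fqk(\lambda)$... ). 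I would handle this by picking a basis and comparing coefficients of $\gamma^j$ for $j\ge 2$, which is completely mechanical and uses $d\ge 3$ transparently: the coefficient of $\gamma^{d}$ (before reduction) is $\lambda_{d-1}v$, and of $\gamma^{d-1}$ is $\lambda_{d-2}v+\lambda_{d-1}u$, etc., forcing $\lambda_{d-1}=\dots=\lambda_2=0$ (using $v\ne0$, and the $v=0$ case being trivial), i.e. $\lambda\in\fqk+\fqk\gamma$, and then the $\gamma^2$-coefficient of $\lambda(u+v\gamma)$ with $\lambda=\lambda_0+\lambda_1\gamma$ is $\lambda_1 v$, forcing $\lambda_1=0$. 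That is the step I'd write out in full; everything else is formal.
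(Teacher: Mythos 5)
There is a genuine gap: the central claim of your proposal, namely that $S_{(u,v)}^\gamma=S_{(u,v)}$ for every nonzero $(u,v)\in U$, is false. Directly from the definition, $S_{(u,v)}^\gamma=\{\lambda\in\fqn : \lambda(u+v\gamma)\in V_{U,\gamma}\}=(u+v\gamma)^{-1}V_{U,\gamma}$, so it is an $\fq$-subspace of $\fqn$ of dimension exactly $m=\dim_{\fq}(U)$; on the other hand $S_{(u,v)}=S_{(u,v)}^\gamma\cap\fqk$ has dimension $\dim_{\fq}(U\cap\langle(u,v)\rangle_{\fqk})$ (Remark \ref{rem:weightpoint}), which equals $1$ in the typical (e.g.\ scattered) case. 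So for $m\geq 2$ the two sets almost never coincide, and the ``immediate'' reduction collapses. The mechanical coefficient argument you propose to fall back on cannot rescue this: writing $\lambda=\lambda_0+\lambda_1\gamma+\dots+\lambda_{d-1}\gamma^{d-1}$ with $d=n/k$, the term $\lambda_{d-1}v\gamma^{d}$ must be reduced via the minimal polynomial of $\gamma$, redistributing it into the coefficients of $1,\gamma,\dots,\gamma^{d-1}$; one is left with only $d-2$ linear conditions (vanishing of the coefficients of $\gamma^2,\dots,\gamma^{d-1}$ after reduction) on the $d$ unknowns $\lambda_0,\dots,\lambda_{d-1}$, which does not force $\lambda_2=\dots=\lambda_{d-1}=0$. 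The field-theoretic variant fails for the reason you yourself begin to notice: from $\lambda(u+v\gamma)=u_1+v_1\gamma$ one only gets $\gamma\in\fqk(\lambda)$, i.e.\ $\lambda$ generates $\fqn$ over $\fqk$, which is perfectly consistent and yields no contradiction.

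The real content of the theorem is precisely that the pairwise intersections of the large ($m$-dimensional) subspaces $S_{(u,v)}^\gamma$ are controlled by the pairwise intersections of the small subspaces $S_{(u,v)}\subseteq\fqk$, and this is not formal. The paper's proof takes $\mu\in (S_{(u,v)}^\gamma\cap S_{(u',v')}^\gamma)\setminus\fq$, writes $\mu(u+\gamma v)=w+\gamma z$ and $\mu(u'+\gamma v')=w'+\gamma z'$ with $(w,z),(w',z')\in U$, derives $(u+\gamma v)(w'+\gamma z')=(u'+\gamma v')(w+\gamma z)$, and uses the $\fqk$-independence of $1,\gamma,\gamma^2$ (this is where $d>2$ enters) to obtain the system \eqref{eq:systconduu'vv'}; the contradiction is then reached through a case analysis that applies the hypothesis \eqref{eq:withNOgamma} not only to the pair $(u,v),(u',v')$ but to several auxiliary pairs such as $(u',v')$ and $(0,z')$, or $(u,v)$ and $(w',0)$. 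None of this is present in your proposal, so the backward implication remains unproved. (The forward implication is indeed trivial, since $S_{(u,v)}=S_{(u,v)}^\gamma\cap\fqk$.)
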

\begin{proof}
If $U$ has the Sidon space property with respect to $\gamma$, then \eqref{eq:withNOgamma} trivially holds. 
Now, suppose that \eqref{eq:withNOgamma} holds for every $\fq$-linearly independent $(u,v),(u',v')\in U$.
Fix any $(u,v),(u',v')\in U$ in such a way that are $\fq$-linearly independent and suppose by contradiction that there exists  
\[ \mu \in (S_{(u,v)}^\gamma\cap S_{(u',v')}^\gamma)\setminus\mathbb{F}_q, \]
and $\mu \notin\fqk$ because of the assumption of $S_{(u,v)}\cap S_{(u',v')}$.
Then 
\[ \mu (u+\gamma v), \mu (u'+\gamma v') \in V_{U,\gamma}, \]
i.e. 
\begin{equation}\label{eq:muinSuvu'v'} 
\mu (u+\gamma v)=w+\gamma z\,\,\, \text{and}\,\,\, \mu (u'+\gamma v')=w'+\gamma z', 
\end{equation}
for some $(w,z),(w',z') \in U$.
Therefore,
\begin{equation}\label{eq:u+vgammau'+gammav'} 
(u+\gamma v)(w'+\gamma z')=(u'+\gamma v')(w+\gamma z), 
\end{equation}
and hence since $[\F_{q^k}(\gamma):\F_{q^k}]>2$ it follows
\begin{equation}\label{eq:systconduu'vv'}
\left\{
\begin{array}{lll}
uw'-u'w=0,\\
vz'-v'z=0,\\
uz'+vw'=u'z+v'w.
\end{array}
\right.
\end{equation}
Note that since $\mu \notin \fqk$ the pairs $(u,w),(v,z),(u',w')$ and $(v',z')$ are different from $(0,0)$.
We start by proving that $w$ and $w'$ cannot be zero.\\
\textbf{Case 1}: $w=0$.\\
By System \eqref{eq:systconduu'vv'} it follows that $w'=0$ and $u\ne 0$.\\
\textbf{Case 1.1:} $v=0$.\\ 
In this case, also $v'=0$ and so we obtain
\[ \frac{\mu}{\gamma}=\frac{z}u=\frac{z'}{u'} \in \fqk, \]
hence $\frac{\mu}{\gamma} \in S_{(u,0)}\cap S_{(u',0)}$, implying that $\mu/\gamma=\alpha \in \fq$. Also, $(u,v)=(u,0)$, $(u',v')=(u',0)$ $(0,z/\alpha)=(0,u)$ and $(0,z'/\alpha)=(0,u')$ are in $U$. 
Let $\bar{\alpha}=u'/u$, then $\bar{\alpha}\in S_{(u,0)}\cap S_{(0,u)}=\fq$, that is
$(u,v)$ and $(u',v')$ are $\fq$-proportional, a contradiction.\\ \textbf{Case 1.2:} $v \ne 0$.\\
Therefore, by the second and third equations of System \eqref{eq:systconduu'vv'} we get $\beta=u/u'=v/v'=z/z' \in S_{(u',v')}$.
Since $(0,z)$ and $(0,z')$ are both in $U$, then we also obtain that $\beta \in S_{(0,z')}$.
Since $u'\ne 0$ then $\beta \in S_{(u',v')} \cap S_{(0,z')}=\fq$, which implies that $(u,v)$ and $(u',v')$ are $\fq$-proportional.
\newline
\textbf{Case 2:} $w'=0$.\\
Argue as for Case 1.\\
\textbf{Case 3:} $w,w' \ne 0$.\\
We have two subcases that need to be analyzed.\\
\textbf{Case 3.1:} $z,z'\ne 0$.\\ 
Let
\[ \lambda=\frac{u}{w}=\frac{u'}{w'}\in \F_{q^k}\,\,\,\text{and}\,\,\,\rho=\frac{v}{z}=\frac{v'}{z'}\in \F_{q^k}. \]
From the third equation of System \eqref{eq:systconduu'vv'}, we have
\[ wz'\lambda+zw'\rho=w'z\lambda+z' w\rho, \]
that is
\[ (\rho-\lambda) (zw'-z'w)=0. \]
If $zw'-z'w \ne 0$, we have $\rho=\lambda$ and $\lambda=\mu^{-1}$, that is $\mu \in \F_{q^k}$. This immediately implies that 
\[\mu \in S_{(u,v)}\cap S_{(u',v')}=\fq, \]
a contradiction.
Hence, $zw'=z'w$, then there exists $\xi \in \F_{q^k}$ such that $(w,z)=\xi (w',z')$, then by \eqref{eq:u+vgammau'+gammav'} it follows that 
\[ u+\gamma v=\xi (u'+\gamma v'), \]
and so $\xi \in S_{(u',v')}\cap S_{(w',z')}=\fq$ since  $(u',v')$ and $(w',z')$ are not $\fq$-proportional by \eqref{eq:muinSuvu'v'}. Hence, $(u,v)$ and $(u',v')$ are $\fq$-linearly dependent, again a contradiction. \\
\textbf{Case 3.2:} either $z=0$ or $z'=0$.\\
Without loss of generality, let $z=0$.
From the second equation of System \eqref{eq:systconduu'vv'}, we get $z'=0$.
Note that $(w,0),(w',0) \in U$. From \eqref{eq:muinSuvu'v'}  it follows that
\[ \mu (u+\gamma v)=w\,\,\,\text{and}\,\,\,\mu (u'+\gamma v')=w', \]
from which we get that
\[ (u',v')=\frac{w}{w'} (u,v). \]
Moreover, $w/w' \in S_{(w',0)}$ and so
\[ \frac{w}{w'}\in S_{(u,v)}\cap S_{(w',0)}=\fq, \]
since $v\ne 0$.
We have again a contradiction since $(u,v)$ and $(u',v')$ turn out to be $\fq$-proportional.
\end{proof}

\begin{remark}
The above theorem cannot be extended to the case of extension fields of degree two, see for instance Example \ref{ex:q=3noSidon}.
\end{remark}

\begin{remark}\label{rem:weightpoint}
By definition, it follows that
\[ \dim_{\fq}(S_{(u,v)})=\dim_{\fq}(U \cap \langle (u,v) \rangle_{\fqk}). \]
\end{remark}

The above result allows us to give the following definition.

\begin{definition}
Let $U$ be an $\fq$-subspace of $\fqk^2$. We say that $U$ has the \textbf{Sidon space property} if
for every $\fq$-linearly independent $(u,v),(u',v')\in U$ we have that 
\[S_{(u,v)}\cap S_{(u',v')}=\fq.\]
\end{definition}

As a corollary of Theorem \ref{thm:charwithNOgamma} we have the following.

\begin{corollary}\label{cor:sidonspacegamma>2}
Let $U$ be an $\fq$-subspace of $\fqk^2$. If $U$ has the Sidon space property then it defines a Sidon space in every extension of $\F_{q^k}$ of degree greater than two, that is 
\[ V_{U,\gamma}=\lbrace u+v\gamma : (u,v)\in U\rbrace \]
is a Sidon space in $\F_{q^k}(\gamma)$ for every $\gamma$ such that $[\F_{q^k}(\gamma):\mathbb{F}_{q^k}]>2$.
\end{corollary}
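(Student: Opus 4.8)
The plan is to obtain this statement as a direct consequence of the two characterization results already proved, by chaining them together. First I would fix an arbitrary $\gamma$ with $d:=[\F_{q^k}(\gamma):\F_{q^k}]>2$ and set $n=kd$, so that $\F_{q^k}(\gamma)=\F_{q^n}$ and $\gamma$ is a root of an irreducible polynomial over $\F_{q^k}$ of degree $n/k=d\geq 2$; this is precisely the ambient set-up under which both Theorem \ref{caratterizzazioneSidonCoppie} and Theorem \ref{thm:charwithNOgamma} are stated. (Recall that every extension of $\F_{q^k}$ of degree $d$ is of the form $\F_{q^k}(\gamma)$ for some such $\gamma$, since finite extensions of finite fields are simple.)

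Next, since by hypothesis $U$ has the Sidon space property, condition \eqref{eq:withNOgamma} holds for every pair of $\fq$-linearly independent $(u,v),(u',v')\in U$, namely $S_{(u,v)}\cap S_{(u',v')}=\fq$. Because $[\F_{q^k}(\gamma):\F_{q^k}]=d>2$, Theorem \ref{thm:charwithNOgamma} applies and yields that $U$ has the Sidon space property with respect to this particular $\gamma$, i.e. $S_{(u,v)}^\gamma\cap S_{(u',v')}^\gamma=\fq$ for all $\fq$-linearly independent $(u,v),(u',v')\in U$.

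Finally I would invoke Theorem \ref{caratterizzazioneSidonCoppie}: since $U$ has the Sidon space property with respect to $\gamma$, the subspace $V=V_{U,\gamma}\in\mathcal{G}_q(n,m)$ is a Sidon space of $\F_{q^n}=\F_{q^k}(\gamma)$. As $\gamma$ was an arbitrary element with $[\F_{q^k}(\gamma):\F_{q^k}]>2$, this proves the claim.

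I do not expect a genuine obstacle here; the only point deserving a moment's attention is the bookkeeping check that the hypotheses of the two theorems are met for every admissible $\gamma$ — that $U$ is an $\fq$-subspace of $\fqk^2$ of some dimension $m$ and that $\gamma$ is a root of an irreducible polynomial of degree $n/k\geq 2$ over $\F_{q^k}$ — which is automatic once we record $d>2\geq 2$. It is worth remarking, as the following Remark and Example \ref{ex:q=3noSidon} indicate, that the restriction to degree greater than two is essential: in a degree-two extension the Sidon space property of $U$ alone no longer suffices, since the condition genuinely depends on $\gamma$ there.
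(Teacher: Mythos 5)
Your proof is correct and follows exactly the route the paper intends: the corollary is stated as an immediate consequence of Theorem \ref{thm:charwithNOgamma} (which upgrades the $\gamma$-free Sidon space property to the Sidon space property with respect to any $\gamma$ of degree greater than two) combined with Theorem \ref{caratterizzazioneSidonCoppie}. The bookkeeping you flag is indeed the only thing to check, and your remark about the necessity of the degree restriction matches the paper's own observation via Example \ref{ex:q=3noSidon}.
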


When $V_{U,\gamma}$ is a Sidon space, then also the related subspaces $S_{(u,v)}^\gamma$ and $S_{(u,v)}$ are Sidon spaces.

\begin{proposition}\label{prop:SuvSidon}
Let $\gamma\in\fqn$ be a root of an irreducible polynomial of degree $\frac{n}{k}\geqslant 2$ over $\fqk$. Let $U$ be an $\fq$-subspace of $\fqk^2$ of dimension $m$ over $\fq$ 
and suppose that $V=V_{U,\gamma}\in\mathcal{G}_q(n,m)$ is a Sidon space. Then
\begin{itemize}
    \item $S_{(u,v)}^\gamma$ is a Sidon space in $\fqn$, for any nonzero vector $(u,v)\in U$;
    \item if $[\F_{q^k}(\gamma):\mathbb{F}_{q^k}]>2$, then $S_{(u,v)}$ is a Sidon space in $\fqk$, for any nonzero vector $(u,v)\in U$.
\end{itemize}
\end{proposition}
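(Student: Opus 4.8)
The plan is to reduce the Sidon space property of $S_{(u,v)}^\gamma$ (resp.\ $S_{(u,v)}$) to the already-established Sidon space property of $V=V_{U,\gamma}$ by exhibiting a linear map that carries the multiplicative relation $ab=cd$ inside $S_{(u,v)}^\gamma$ to such a relation inside $V$. Fix a nonzero $(u,v)\in U$ and write $e:=u+v\gamma\in V$, so that by definition $S_{(u,v)}^\gamma=\{\lambda\in\fqn \colon \lambda e\in V\}$, which is an $\fq$-subspace of $\fqn$ containing $\fq$. The key observation is that multiplication by $e$ is an $\fq$-linear bijection of $\fqn$ (since $e\neq 0$) sending $S_{(u,v)}^\gamma$ bijectively onto the $\fq$-subspace $eS_{(u,v)}^\gamma = \{x\in V \colon x/e\in V\}$ of $V$; equivalently $eS_{(u,v)}^\gamma = V\cap e^{-1}V$. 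So I would first record that $S_{(u,v)}^\gamma$ is strictly $\fq$-linear (it contains $\fq$ and is a subspace of $\fqn$, and if it were $\fqt$-linear for some intermediate field then so would $eS_{(u,v)}^\gamma\subseteq V$ be, contradicting strict $\fq$-linearity of the Sidon space $V$ guaranteed by Theorem~\ref{lem:charSidon}).

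The main step is the multiplicative part. Suppose $\alpha,\beta,\gamma',\delta\in S_{(u,v)}^\gamma$ are nonzero with $\alpha\beta=\gamma'\delta$. Set $a:=\alpha e$, $b:=\beta e$, $c:=\gamma' e$, $d:=\delta e$; these are nonzero elements of $V$, and multiplying the relation $\alpha\beta=\gamma'\delta$ by $e^2$ gives $ab=cd$. Since $V$ is a Sidon space, $\{a\fq,b\fq\}=\{c\fq,d\fq\}$. Because multiplication by $e$ is $\fq$-linear, $a\fq=\alpha e\fq$ corresponds to $\alpha\fq$ under the bijection $x\mapsto x/e$, and similarly for the others; hence $\{\alpha\fq,\beta\fq\}=\{\gamma'\fq,\delta\fq\}$, which is exactly the Sidon condition for $S_{(u,v)}^\gamma$. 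Combined with strict $\fq$-linearity, this proves the first bullet. For the second bullet, when $[\F_{q^k}(\gamma):\F_{q^k}]>2$, Theorem~\ref{thm:charwithNOgamma} together with Remark~\ref{rem:weightpoint} gives that $S_{(u,v)}=S_{(u,v)}^\gamma\cap\fqk$ is an $\fq$-subspace of $\fqk$; I would check it is strictly $\fq$-linear (it contains $\fq$, and any intermediate-field linearity would contradict that of the Sidon space $V_{U,\gamma'}$ for a degree-$>2$ extension via Corollary~\ref{cor:sidonspacegamma>2} applied appropriately — or more directly, since $S_{(u,v)}\subseteq S_{(u,v)}^\gamma$ and the latter is strictly $\fq$-linear). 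Then repeat the multiplicative argument verbatim with $e=u+v\gamma$, noting that if $\alpha,\beta,\gamma',\delta\in S_{(u,v)}\subseteq\fqk$ satisfy $\alpha\beta=\gamma'\delta$, the same rescaling by $e$ lands in $V=V_{U,\gamma}$ and yields the conclusion.

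The only subtle point — and the place I expect to have to argue carefully — is the strict $\fq$-linearity claim, i.e.\ showing $S_{(u,v)}^\gamma$ (and $S_{(u,v)}$) is not contained in any larger subfield's worth of scalars, since the Sidon space definition in this paper is the "combinatorial" one ($ab=cd\Rightarrow\{a\fq,b\fq\}=\{c\fq,d\fq\}$) rather than the equivalent code-theoretic one, and the two are equivalent only modulo Theorem~\ref{lem:charSidon} which assumes strict $\fq$-linearity. If one adopts the purely combinatorial definition as the target, this issue disappears entirely and the multiplicative argument above is the whole proof; if one wants the full equivalence with optimal one-orbit cyclic codes, I would insert the short argument that $eS_{(u,v)}^\gamma=V\cap e^{-1}V$ inherits strict $\fq$-linearity from $V$ and transport it back through multiplication by $e^{-1}$. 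Everything else is routine linear algebra.
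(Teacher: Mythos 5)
Your argument is correct and is essentially the paper's own proof: both rest on the observation that $(u+v\gamma)S_{(u,v)}^\gamma\subseteq V_{U,\gamma}$, so that the combinatorial Sidon condition ($ab=cd$ forcing $\{a\fq,b\fq\}=\{c\fq,d\fq\}$) transfers back through the $\fq$-linear bijection given by multiplication by $e=u+v\gamma$ --- the paper phrases this as ``subspaces and nonzero multiples of Sidon spaces are again Sidon spaces''. Two harmless slips in your write-up: the identity $eS_{(u,v)}^\gamma=V\cap e^{-1}V$ should in fact read $eS_{(u,v)}^\gamma=V_{U,\gamma}$ (only the inclusion $eS_{(u,v)}^\gamma\subseteq V_{U,\gamma}$ is used anyway), and the strict $\fq$-linearity discussion is unnecessary, since the paper's definition of Sidon space is the purely combinatorial one and strictness is a consequence via Theorem~\ref{lem:charSidon} rather than a hypothesis to be verified.
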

\begin{proof}
Let's fix a nonzero vector $(u,v)\in U$.
Because of its definition, we have that 
\[ \alpha (u+v\gamma)\in V_{U,\gamma}, \]
for any $\alpha \in S_{(u,v)}^\gamma$, that is
\[ (u+v\gamma )S_{(u,v)}^\gamma \subseteq V_{U,\gamma}. \]
Since $V_{U,\gamma}$ is a Sidon space and every of its subspaces is a Sidon space as well, we have that $(u+v\gamma )S_{(u,v)}^\gamma$ is a Sidon space and hence $S_{(u,v)}^\gamma$ is a Sidon space.
Similar arguments can be performed for $S_{(u,v)}$.
\end{proof}

When the dimension of $V=V_{U,\gamma}$ is $k$, then it can be represented by means of a linearized polynomial.

\begin{lemma}\label{lem:expaspol}
Let $\gamma\in\fqn$ be a root of an irreducible polynomial of degree $\frac{n}{k}\geqslant 2$ over $\fqk$.
Let $U\subseteq\fqk^2$ be an $\fq$-subspace such that $\dim_{\fq}(U)=k$ and $U\cap\langle (0,1)\rangle_{\F_{q^k}}=\lbrace (0,0)\rbrace$. Let consider $V_{U,\gamma}=\lbrace a+b\gamma: (a,b)\in U\rbrace\in\mathcal{G}_q(n,k)$. Then there exists $f\in\mathcal{L}_{k,q}$ such that 
\[V_{U,\gamma}=\lbrace u+f(u)\gamma:u\in\fqk\rbrace.\]
\end{lemma}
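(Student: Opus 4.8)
The plan is to exploit the hypothesis $U\cap\langle(0,1)\rangle_{\F_{q^k}}=\{(0,0)\}$ to show that the projection $\pi\colon U\to\F_{q^k}$ onto the first coordinate, $(a,b)\mapsto a$, is an injective $\F_q$-linear map; since $\dim_{\F_q}(U)=k=\dim_{\F_q}(\F_{q^k})$, it is in fact an $\F_q$-linear bijection. First I would verify injectivity: if $(a,b),(a,b')\in U$ then $(0,b-b')\in U$, hence $(0,b-b')\in U\cap\langle(0,1)\rangle_{\F_{q^k}}=\{(0,0)\}$, so $b=b'$. Surjectivity is then a dimension count.

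Next I would define the second-coordinate map in terms of the first. For each $u\in\F_{q^k}$ there is a unique $(u,b_u)\in U$ by the bijectivity of $\pi$; set $f(u):=b_u$. I would then check that $f\colon\F_{q^k}\to\F_{q^k}$ is $\F_q$-linear: given $u,u'\in\F_{q^k}$ and $c\in\F_q$, the vectors $(u,f(u))$ and $(u',f(u'))$ lie in $U$, so $(u,f(u))+c(u',f(u'))=(u+cu',\,f(u)+cf(u'))$ lies in $U$; by uniqueness of the second coordinate over $u+cu'$ this forces $f(u+cu')=f(u)+cf(u')$. Hence $f\in\mathrm{End}_{\F_q}(\F_{q^k})$, and via the isomorphism $\tilde{\mathcal{L}}_{k,q}\cong\mathrm{End}_{\F_q}(\F_{q^k})$ recalled in Section~2 we may represent $f$ by a linearized polynomial in $\mathcal{L}_{k,q}$ (indeed of $q$-degree less than $k$).

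Finally I would identify $U$ with the graph $\{(u,f(u)):u\in\F_{q^k}\}$: by construction every element of $U$ has the form $(u,f(u))$, and conversely every such pair lies in $U$; since both sets have $\F_q$-dimension $k$ this is an equality. Applying the map $(a,b)\mapsto a+b\gamma$ then gives
\[
V_{U,\gamma}=\{a+b\gamma:(a,b)\in U\}=\{u+f(u)\gamma:u\in\F_{q^k}\},
\]
which is the claimed description. There is no serious obstacle here; the only point requiring a moment's care is checking that $\pi$ is a bijection rather than merely injective — this is where the dimension assumption $\dim_{\F_q}(U)=k$ and the transversality assumption $U\cap\langle(0,1)\rangle_{\F_{q^k}}=\{(0,0)\}$ are both used — and observing that $f$ being $\F_q$-linear (not merely a function) is exactly what lets us invoke the linearized-polynomial representation.
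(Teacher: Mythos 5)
Your proposal is correct and follows essentially the same route as the paper: the hypotheses $\dim_{\F_q}(U)=k$ and $U\cap\langle(0,1)\rangle_{\F_{q^k}}=\{(0,0)\}$ guarantee that for each $a\in\F_{q^k}$ there is a unique $b$ with $(a,b)\in U$, the resulting map $a\mapsto b$ is $\F_q$-linear because $U$ is an $\F_q$-subspace, and the isomorphism $\tilde{\mathcal{L}}_{k,q}\cong\mathrm{End}_{\F_q}(\F_{q^k})$ yields the linearized polynomial. Your version merely spells out the injectivity-plus-dimension-count argument that the paper leaves implicit.
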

\begin{proof}
Since $\dim_{\fq}(U)=k$ and $U\cap\langle (0,1)\rangle_{\F_{q^k}}=\lbrace (0,0)\rbrace$, for any $a \in \F_{q^k}$ there exists a unique $b \in \F_{q^k}$ such that $(a,b)\in U$.
This implies that the map 
    \[
\begin{aligned}
f\colon \fqk&\to \fqk\\
a&\mapsto b: (a,b)\in U
\end{aligned}
    \]  
is well defined and, since $V_{U,\gamma}$ is an $\fq$-subspace, it results that $f$ is an $\fq$-linear map, that is $f \in \mathcal{L}_{k,q}$. 
\end{proof}

\begin{notation}
Let $f\in\mathcal{L}_{k,q}$ be a linearized polynomial over $\fqk$ and let $U_{f}=\lbrace (u,f(u)):u\in\fqk\rbrace \subseteq \fqk^2$ be the $\fq$-subspace of $\fqk^2$ defined by $f$ which has dimension $k$ over $\fq$. Then, let denote by
\[V_{f,\gamma}=\lbrace u+f(u)\gamma:u\in\fqk\rbrace\in\mathcal{G}_q(n,k),\]
\[S_{(u,f(u))}^\gamma =S_{u,f}^\gamma\,\,\, \text{for every} \,\,\,u\in\fqk,\]
and 
\[S_{(u,f(u))} =S_{u,f}\,\,\, \text{for every} \,\,\,u\in\fqk.\]
\end{notation}

Therefore, we give the following definition on a linearized polynomial $f$, which will correspond to say that $V_{f,\gamma}$ is a Sidon space.

\begin{definition}\label{def:sidonpol}
Let $f\in\mathcal{L}_{k,q}$ be a linearized polynomial over $\fqk$. We say that $f$ is a \textbf{Sidon space polynomial with respect to $\gamma$} if for every $x_0,x_1\in\fqk$ such that $x_0,x_1$ are not $\fq$-proportional we have that $S_{x_0,f}^\gamma\cap S_{x_1,f}^\gamma=\fq$. 
Moreover, we say that $f$ is a \textbf{Sidon space polynomial} if for every $x_0,x_1\in\fqk$ such that $x_0,x_1$ are not $\fq$-proportional we have that $S_{x_0,f}\cap S_{x_1,f}=\fq$.
\end{definition}

Theorem \ref{caratterizzazioneSidonCoppie} and Corollary \ref{cor:sidonspacegamma>2} allow us to characterize Sidon spaces of form $V_{f,\gamma}$.

\begin{corollary}
\label{caratterizzazioneSidonPolinomi}
Let $f\in\mathcal{L}_{k,q}$ be a linearized polynomial over $\fqk$. Then 
\begin{enumerate}
    \item $V_{f,\gamma}$ is a Sidon space if and only if $f$ is a Sidon space polynomial with respect to $\gamma$, where $\gamma$ is a root of an irreducible polynomial of degree at least two over $\fqk$;
    \item Also, $V_{f,\gamma}$ is a Sidon space for every $\gamma$ such that $[\F_{q^k}(\gamma)\colon \F_{q^k}]>2$ if and only $f$ is a Sidon space polynomial.
\end{enumerate}
\end{corollary}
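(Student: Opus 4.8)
The plan is to derive Corollary~\ref{caratterizzazioneSidonPolinomi} as a direct translation of the already-proven results into the language of linearized polynomials, using only the dictionary set up by Lemma~\ref{lem:expaspol} and the notation that precedes the statement. The key observation is that $U_f = \{(u,f(u)) \colon u \in \fqk\}$ is an $\fq$-subspace of $\fqk^2$ of dimension $k$, so both Theorem~\ref{caratterizzazioneSidonCoppie} (with $m = k$ and $U = U_f$) and Corollary~\ref{cor:sidonspacegamma>2} apply verbatim to it, and moreover $V_{f,\gamma}$ is by definition exactly $V_{U_f,\gamma}$.

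For part~(1), I would argue as follows. Apply Theorem~\ref{caratterizzazioneSidonCoppie} with $U = U_f$: the space $V_{f,\gamma} = V_{U_f,\gamma}$ is a Sidon space if and only if $U_f$ has the Sidon space property with respect to $\gamma$, i.e.\ $S_{(u,v)}^\gamma \cap S_{(u',v')}^\gamma = \fq$ for every $\fq$-linearly independent pair $(u,v),(u',v') \in U_f$. The only point needing a line of justification is that this last condition matches Definition~\ref{def:sidonpol}: every nonzero element of $U_f$ has the form $(x_0, f(x_0))$ for a unique $x_0 \in \fqk$, and two such vectors $(x_0,f(x_0))$ and $(x_1,f(x_1))$ are $\fq$-linearly independent precisely when $x_0, x_1$ are not $\fq$-proportional (the linear dependence $\mu(x_0,f(x_0)) = (x_1,f(x_1))$ with $\mu \in \fq$ forces, and is forced by, $x_1 = \mu x_0$ using $\fq$-linearity of $f$). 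Under the notational identification $S_{(x_i,f(x_i))}^\gamma = S_{x_i,f}^\gamma$, the Sidon space property of $U_f$ with respect to $\gamma$ is literally the assertion that $f$ is a Sidon space polynomial with respect to $\gamma$, which completes part~(1).

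For part~(2), I would combine Corollary~\ref{cor:sidonspacegamma>2} with Theorem~\ref{thm:charwithNOgamma} applied to $U = U_f$. The forward direction: if $V_{f,\gamma}$ is a Sidon space for every $\gamma$ with $[\F_{q^k}(\gamma):\F_{q^k}] > 2$, then picking one such $\gamma$, part~(1) says $f$ is a Sidon space polynomial with respect to that $\gamma$, i.e.\ $U_f$ has the Sidon space property with respect to $\gamma$; Theorem~\ref{thm:charwithNOgamma} then converts this into $S_{(u,v)} \cap S_{(u',v')} = \fq$ for all $\fq$-linearly independent $(u,v),(u',v') \in U_f$, which under the notational identification $S_{(x_i,f(x_i))} = S_{x_i,f}$ says exactly that $f$ is a Sidon space polynomial. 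Conversely, if $f$ is a Sidon space polynomial, then $U_f$ has the Sidon space property in the sense of the last definition, and Corollary~\ref{cor:sidonspacegamma>2} yields that $V_{U_f,\gamma} = V_{f,\gamma}$ is a Sidon space in $\F_{q^k}(\gamma)$ for every $\gamma$ of degree $> 2$ over $\fqk$. This closes the equivalence.

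I do not expect a genuine obstacle here — the proof is essentially bookkeeping — but the one place to be careful is the translation between "$\fq$-linear independence of pairs in $U_f$" and "non-$\fq$-proportionality of the arguments $x_0, x_1$", since the definitions of Sidon space polynomial are phrased in terms of the arguments rather than the graph vectors; I would state this equivalence explicitly as the first step so that the rest reduces to quoting Theorem~\ref{caratterizzazioneSidonCoppie}, Theorem~\ref{thm:charwithNOgamma} and Corollary~\ref{cor:sidonspacegamma>2}.
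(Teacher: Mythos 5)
Your proof is correct and follows exactly the route the paper intends: the corollary is stated there as an immediate consequence of Theorem~\ref{caratterizzazioneSidonCoppie}, Theorem~\ref{thm:charwithNOgamma} and Corollary~\ref{cor:sidonspacegamma>2} applied to $U = U_f$, with no further argument given. Your explicit check that $\fq$-linear independence of $(x_0,f(x_0)),(x_1,f(x_1))$ in $U_f$ corresponds to non-$\fq$-proportionality of $x_0,x_1$ is the only nontrivial translation step, and you handle it correctly.
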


\begin{remark} \label{rk:kernelsidon}
It is interesting to observe that, because of Proposition \ref{prop:SuvSidon}, $f$ is a Sidon space polynomial then $\ker(f)$ is a Sidon space of $\fqk$.
\end{remark}

Let observe that if $f\in\mathcal{L}_{k,q}$ has the property that $\dim_{\fq}(S_{x,f}^\gamma)\leqslant 2$ for every $x\in\fqk$ then the Sidon space property is easier to check.

\begin{corollary}
\label{CorollarioCaratterizzazioneDim2}
Let $\gamma\in\fqn$ be a root of an irreducible polynomial of degree $\frac{n}{k}\geqslant 2$ over $\fqk$ and let $f\in\mathcal{L}_{k,q}$ be such that $\dim_{\fq} (S_{x,f}^\gamma)\leqslant 2$ for every $x\in\fqk$. Then $V_{f,\gamma}$ is a Sidon space if and only if $S_{x_0,f}^\gamma\neq S_{x_1,f}^\gamma$ for every $x_0,x_1\in\fqk$ such that $x_0$ and $x_1$ are not $\fq$-proportional and $\dim_{\fq}(S_{x_0,f}^\gamma)=\dim_{\fq}(S_{x_1,f}^\gamma)=2$.
\end{corollary}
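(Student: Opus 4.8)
The statement is Corollary \ref{CorollarioCaratterizzazioneDim2}, which says: under the hypothesis $\dim_{\fq}(S_{x,f}^\gamma)\le 2$ for all $x\in\fqk$, the space $V_{f,\gamma}$ is a Sidon space if and only if the subspaces $S_{x_0,f}^\gamma$ and $S_{x_1,f}^\gamma$ are distinct whenever $x_0,x_1$ are non-$\fq$-proportional \emph{and} both of these subspaces have dimension $2$. The plan is to deduce this from Corollary \ref{caratterizzazioneSidonPolinomi}, which reduces ``$V_{f,\gamma}$ is a Sidon space'' to ``$f$ is a Sidon space polynomial with respect to $\gamma$'', i.e.\ $S_{x_0,f}^\gamma\cap S_{x_1,f}^\gamma=\fq$ for every non-$\fq$-proportional pair $x_0,x_1$.

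\medskip

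\textbf{Forward direction.} If $V_{f,\gamma}$ is a Sidon space, then $f$ is a Sidon space polynomial with respect to $\gamma$, so $S_{x_0,f}^\gamma\cap S_{x_1,f}^\gamma=\fq$ for all non-$\fq$-proportional $x_0,x_1$. In particular, if both subspaces have dimension $2$ over $\fq$ and they were equal, their intersection would have dimension $2$, contradicting that it equals $\fq$ (dimension $1$). Hence $S_{x_0,f}^\gamma\neq S_{x_1,f}^\gamma$.

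\medskip

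\textbf{Converse direction.} Suppose $S_{x_0,f}^\gamma\neq S_{x_1,f}^\gamma$ whenever $x_0,x_1$ are non-$\fq$-proportional and both have dimension $2$. Fix any non-$\fq$-proportional $x_0,x_1\in\fqk$; I must show $S_{x_0,f}^\gamma\cap S_{x_1,f}^\gamma=\fq$. Since each $S_{x_i,f}^\gamma$ is an $\fq$-subspace containing $\fq$, its dimension is $1$ or $2$ by hypothesis. If either $S_{x_0,f}^\gamma$ or $S_{x_1,f}^\gamma$ equals $\fq$ (dimension $1$), then the intersection is contained in $\fq$, hence equals $\fq$, and we are done. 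So assume both have dimension exactly $2$. Their intersection is an $\fq$-subspace containing $\fq$, so it has dimension $1$ or $2$; if it had dimension $2$ it would coincide with both $S_{x_0,f}^\gamma$ and $S_{x_1,f}^\gamma$, forcing $S_{x_0,f}^\gamma=S_{x_1,f}^\gamma$, which contradicts the hypothesis. Therefore the intersection has dimension $1$, i.e.\ equals $\fq$. This shows $f$ is a Sidon space polynomial with respect to $\gamma$, and by Corollary \ref{caratterizzazioneSidonPolinomi} the space $V_{f,\gamma}$ is a Sidon space.

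\medskip

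\textbf{Main obstacle.} There is essentially no obstacle here: the whole content is the elementary dimension bookkeeping for $\fq$-subspaces of dimension at most $2$, each containing the line $\fq$, combined with the reduction already provided by Corollary \ref{caratterizzazioneSidonPolinomi}. The only point worth stating carefully is that two distinct $2$-dimensional $\fq$-subspaces both containing $\fq$ must meet exactly in $\fq$ — which is immediate since their intersection strictly contains neither of them yet contains $\fq$, so it has dimension exactly $1$.
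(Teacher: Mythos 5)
Your proof is correct and follows essentially the same route as the paper: reduce to Corollary \ref{caratterizzazioneSidonPolinomi} and then do the elementary dimension count on $\fq$-subspaces of dimension at most $2$ containing $\fq$. You actually spell out the forward direction and the cases where one of the $S_{x_i,f}^\gamma$ is $1$-dimensional, which the paper leaves implicit, but the underlying argument is identical.
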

\begin{proof}
Suppose that $x_0$ and $x_1$ are not $\fq$-proportional and $\dim_{\fq}( S_{x_0,f}^\gamma)=\dim_{\fq}( S_{x_1,f}^\gamma)=2$, then $1\leqslant\dim_{\fq} (S_{x_0,f}^\gamma\cap S_{x_1,f}^\gamma)\leqslant 2$. If $\dim_{\fq}(S_{x_0,f}^\gamma\cap S_{x_1,f}^\gamma)=2$ then $S_{x_0,f}^\gamma\cap S_{x_1,f}^\gamma=S_{x_0,f}^\gamma=S_{x_1,f}^\gamma$ and this is not possible because of the assumptions. Therefore $\dim_{\fq} (S_{x_0,f}^\gamma\cap S_{x_1,f}^\gamma)=1$ and by Corollary \ref{caratterizzazioneSidonPolinomi} we have that $V_{f,\gamma}$ is a Sidon space.
\end{proof}

\begin{remark}\label{rk:corwithnogamma}
From the above corollary, we also have that when $n/k>2$ we can replace $S_{x,f}^\gamma$ by $S_{x,f}$ and we can remove the dependence on $\gamma$.
\end{remark}

\begin{remark}\label{rem:Sx0,fdim}
When using linearized polynomials, we can explicitly find $S_{x_0,f}$ as the kernel of a certain linearized polynomial related to $f$. Indeed, we have
\[ S_{x_0,f}=\{ \lambda \in \fqk \colon \lambda (x_0,f(x_0))=(y_0,f(y_0)),\,\,\text{for some } y_0 \in \fqk \} \]
\[ = \{ \lambda \in \fqk \colon f(\lambda x_0)-\lambda f(x_0)=0 \}.  \]
Therefore,
\[ \dim_{\fq}(S_{x_0,f})=\dim_{\fq}(\ker(f(\lambda x_0)-\lambda f(x_0))), \]
where $f(\lambda x_0)-\lambda f(x_0)$ is seen as a linearized polynomial in $\lambda$.
Moreover, by Remark \ref{rem:weightpoint} we can compute this dimension also as follows
\[ \dim_{\fq}(S_{x_0,f})=\dim_{\fq}(\ker(f(x)-(f(x_0)/x_0) x)). \]
\end{remark}

The polynomial version of our characterization allows us to provide new constructions of Sidon spaces that we will explore in the next section.

\section{Constructions of Sidon spaces}\label{sec:constructions}

We start this section by proving that if  $f\in\mathcal{L}_{k,q}$ is scattered, then the Property 2 of Corollary \ref{caratterizzazioneSidonPolinomi} holds and $f$ is a Sidon space polynomial.

\begin{theorem}
\label{scattered implica caratterizzazione}
Let $f\in\mathcal{L}_{k,q}$ be a scattered polynomial. Then $f$ is a Sidon space polynomial.
\end{theorem}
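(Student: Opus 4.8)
The plan is to show directly that a scattered $f \in \mathcal{L}_{k,q}$ satisfies the Sidon space polynomial condition of Definition \ref{def:sidonpol}: for every $x_0, x_1 \in \fqk$ that are not $\fq$-proportional, $S_{x_0,f} \cap S_{x_1,f} = \fq$. The key observation is the reformulation in Remark \ref{rem:Sx0,fdim}: a scalar $\lambda \in \fqk$ lies in $S_{x_0,f}$ precisely when $f(\lambda x_0) - \lambda f(x_0) = 0$, i.e. when $f(\lambda x_0)/(\lambda x_0) = f(x_0)/x_0$ (for $\lambda \neq 0$, $x_0 \neq 0$). So scatteredness tells us immediately that $\lambda x_0$ is $\fq$-proportional to $x_0$, hence $\lambda \in \fq$. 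This already shows $S_{x_0,f} \subseteq \fq$ for every nonzero $x_0$; since $\fq \subseteq S_{x_0,f}$ always, in fact $S_{x_0,f} = \fq$ for all nonzero $x_0$.

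First I would spell out the case $x_0 = 0$ separately (then $S_{0,f} = \fqk$ by definition, but this case is vacuous in Definition \ref{def:sidonpol} since $0$ is $\fq$-proportional to everything, so it never enters the pairwise condition). For nonzero $x_0$, I would invoke Remark \ref{rem:Sx0,fdim} to write $S_{x_0,f} = \{\lambda \in \fqk : f(\lambda x_0) - \lambda f(x_0) = 0\}$. Take any nonzero $\lambda \in S_{x_0,f}$; then $f(\lambda x_0)(\lambda x_0)^{-1} = f(x_0) x_0^{-1}$, so by the scattered hypothesis applied to the pair $a = \lambda x_0$, $b = x_0$, the elements $\lambda x_0$ and $x_0$ are $\fq$-proportional, forcing $\lambda \in \fq^*$. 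Hence $S_{x_0,f} = \fq$.

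With $S_{x_0,f} = S_{x_1,f} = \fq$ for all nonzero $x_0, x_1$, the intersection $S_{x_0,f} \cap S_{x_1,f} = \fq$ is trivially satisfied for every pair of non-$\fq$-proportional elements, so $f$ is a Sidon space polynomial. (One could alternatively phrase the conclusion via Remark \ref{rk:corwithnogamma} / Corollary \ref{caratterizzazioneSidonPolinomi}, observing that $S_{x,f} = \fq$ means $\dim_{\fq}(S_{x,f}) = 1 \leq 2$, so Corollary \ref{CorollarioCaratterizzazioneDim2} applies with no pair ever reaching dimension $2$, but the direct argument is cleaner.)

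There is essentially no obstacle here: the statement is really just an unwinding of definitions once Remark \ref{rem:Sx0,fdim} is in hand, translating the scattered condition $f(a)/a = f(b)/b \Rightarrow a \sim_{\fq} b$ into the statement $S_{x_0,f} = \fq$. The only point requiring a line of care is making sure the correspondence between "$\lambda \in S_{x_0,f}$" and "$\lambda x_0, x_0$ proportional" is stated for nonzero $\lambda$ and nonzero $x_0$, and that the zero cases are handled (or noted as vacuous). I would keep the proof to a few lines.
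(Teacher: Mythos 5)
Your argument is correct and is essentially the paper's own proof: the paper simply invokes the reformulation \eqref{eq:scattcondSa,f} of scatteredness, which says exactly that $\dim_{\fq}(S_{x_0,f})=1$ (i.e.\ $S_{x_0,f}=\fq$) for every nonzero $x_0$, and concludes that the Sidon space property holds trivially. Your more explicit unwinding of why $\lambda\in S_{x_0,f}$ forces $\lambda\in\fq$, together with the remark on the vacuous zero cases, is just a spelled-out version of the same one-line argument.
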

\begin{proof}
By \eqref{eq:scattcondSa,f}, we know that for any $x_0 \in \fqk^*$ 
\[\dim_{\fq}(S_{x_0,f})=1.\]
Hence, $f$ trivially satisfies the Sidon space property.
\end{proof}

As a consequence of Corollary \ref{caratterizzazioneSidonPolinomi} we have examples of Sidon spaces.

\begin{corollary}
\label{fscatteredimplicaSidon}
Let $\gamma\in\fqn$ be a root of an irreducible polynomial of degree $\frac{n}{k}> 2$ over $\fqk$ and $f\in\mathcal{L}_{k,q}$ scattered, then 
\[ V_{f,\gamma}=\lbrace u+f(u)\gamma:u\in\fqk\rbrace \] 
is a Sidon space in $\fqn$.
\end{corollary}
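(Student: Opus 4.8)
The plan is to deduce this immediately from the two results already established. By Theorem~\ref{scattered implica caratterizzazione}, since $f$ is scattered, it is a Sidon space polynomial, i.e.\ for every $x_0,x_1\in\fqk$ which are not $\fq$-proportional we have $S_{x_0,f}\cap S_{x_1,f}=\fq$. Equivalently, in the language of Theorem~\ref{thm:charwithNOgamma}, the $k$-dimensional $\fq$-subspace $U_f=\{(u,f(u))\colon u\in\fqk\}\subseteq\fqk^2$ has the Sidon space property: for any $\fq$-linearly independent $(u,v),(u',v')\in U_f$, $S_{(u,v)}\cap S_{(u',v')}=\fq$.

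Now I would simply invoke Corollary~\ref{cor:sidonspacegamma>2} (or, in polynomial form, part~2 of Corollary~\ref{caratterizzazioneSidonPolinomi}): a subspace $U$ with the Sidon space property defines a Sidon space $V_{U,\gamma}$ in every extension $\fqk(\gamma)$ of degree strictly greater than two. Here $\gamma$ is a root of an irreducible polynomial of degree $\tfrac{n}{k}>2$ over $\fqk$, so $[\fqk(\gamma):\fqk]=\tfrac{n}{k}>2$ and the hypothesis of the corollary is met. Applying it to $U=U_f$ yields that
\[
V_{f,\gamma}=V_{U_f,\gamma}=\{u+f(u)\gamma\colon u\in\fqk\}
\]
is a Sidon space in $\fqk(\gamma)=\fqn$, which is exactly the assertion.

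There is essentially no obstacle here: the statement is a corollary in the literal sense, chaining Theorem~\ref{scattered implica caratterizzazione} (scattered $\Rightarrow$ Sidon space polynomial) with part~2 of Corollary~\ref{caratterizzazioneSidonPolinomi} (Sidon space polynomial $\Rightarrow$ $V_{f,\gamma}$ Sidon whenever $n/k>2$). The only point worth a word of care is the degree hypothesis: the degree-two case is genuinely excluded, as Example~\ref{ex:q=3noSidon} shows, so the strict inequality $\tfrac{n}{k}>2$ is needed precisely to pass from the $\gamma$-free condition on $U_f$ to the Sidon property of $V_{f,\gamma}$; with that in hand the proof is a one-line composition of the cited results.
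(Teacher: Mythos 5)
Your proof is correct and matches the paper's own derivation: the corollary is stated there precisely as a consequence of Theorem~\ref{scattered implica caratterizzazione} combined with part~2 of Corollary~\ref{caratterizzazioneSidonPolinomi}. Your remark on why the strict inequality $n/k>2$ is needed is accurate and consistent with the paper's discussion of Example~\ref{ex:q=3noSidon}.
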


\begin{remark}
In the case of $n=2k$ we cannot get the same conclusion. This seems not a limit of this argument, indeed $f(x)=x^q$ is a scattered polynomial but Example \ref{ex:q=3noSidon} provides examples of $2$-dimensional extension of $\fqk$ in which this polynomial does not define a Sidon space.
\end{remark}

We now show a characterization for monomials defining a Sidon space polynomial when $n=2k$, which clearly involves also $\gamma$.
To this aim, we recall that if $U$ and $V$ are two $\fq$-subspaces of $\fqn$, then
\[ UV=\langle uv \colon u\in U, v \in V\rangle_{\fq}\,\,\,\text{and}\,\,\, U^2=UU.  \]
Then we recall also the following theorem by Bachoc, Serra and Z\'emor in \cite{bachoc2017analogue} which gives a lower bound on the dimension of $V^2$ of a Sidon space $V$.
\begin{theorem}\cite[Theorem 18]{bachoc2017analogue} \label{lowerboundSidon}
Let $V\in\mathcal{G}_q(n,k)$ be a Sidon space of dimension $k\geqslant 3$, then
\[ 
\dim_{\fq}(V^2)\geqslant 2k.
\]
\end{theorem}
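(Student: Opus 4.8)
The plan is to first establish the general bound $\dim_{\fq}(V^2)\geqslant 2k-1$, valid for every Sidon space, and then to squeeze out one extra dimension using the hypothesis $k\geqslant 3$. I may assume $1\in V$: replacing $V$ by $v_0^{-1}V$ for a fixed $v_0\in V\setminus\{0\}$ changes neither the Sidon property nor $\dim_{\fq}(V^2)$, and once $1\in V$ one has $V=1\cdot V\subseteq V^2$.

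The first step is the observation that for any two $\fq$-linearly independent $v,w\in V$ one has $vV\cap wV=\langle vw\rangle_{\fq}$, hence $\dim_{\fq}(vV\cap wV)=1$. Indeed, if $x$ is a nonzero element of $vV\cap wV$, write $x=vy=wz$ with $y,z\in V\setminus\{0\}$; then $vy=wz$ is a relation $ab=cd$ with $a,b,c,d\in V\setminus\{0\}$, so the Sidon property yields $\{\langle v\rangle_{\fq},\langle y\rangle_{\fq}\}=\{\langle w\rangle_{\fq},\langle z\rangle_{\fq}\}$, and since $\langle v\rangle_{\fq}\neq\langle w\rangle_{\fq}$ this forces $\langle y\rangle_{\fq}=\langle w\rangle_{\fq}$ and $\langle z\rangle_{\fq}=\langle v\rangle_{\fq}$, so $x\in\langle vw\rangle_{\fq}$. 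Therefore $\dim_{\fq}(vV+wV)=k+k-1=2k-1$; since $vV+wV\subseteq V^2$ this already gives $\dim_{\fq}(V^2)\geqslant 2k-1$, and the same computation with $w=1$ shows $\dim_{\fq}(V+vV)=2k-1$ for every $v\in V\setminus\fq$.

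Now suppose, for a contradiction, that $\dim_{\fq}(V^2)=2k-1$. Fixing any two $\fq$-linearly independent $v_1,v_2\in V$, every subspace $wV$ ($w\in V\setminus\{0\}$) is a $k$-dimensional subspace of the $(2k-1)$-dimensional space $v_1V+v_2V$, hence lies inside it; summing over $w$ gives $V^2=v_1V+v_2V$, and, the pair being arbitrary, $V^2=vV+wV$ for every $\fq$-linearly independent $v,w\in V$, in particular $V^2=V+vV$ for every $v\in V\setminus\fq$ (take $v_1=1$). These rigidity relations become strongly over-determined when $k\geqslant 3$. Using that $V/\fq$ has dimension $k-1\geqslant 2$, pick $1,v,w\in V$ that are $\fq$-linearly independent with $v,w\notin\fq$, and expand the products $v\cdot x$ and $w\cdot x$ ($x\in V$) in the two decompositions $V^2=V+vV=V+wV$; after rearrangement each such expansion turns into a product equation with all four factors in $V$ (e.g. $vx=p+vq$ with $p,q\in V$ gives $v\cdot(x-q)=p\cdot 1$), so it can be fed back into the Sidon property exactly as in the first step, pinning down the corresponding $\fq$-linear coefficient maps almost completely. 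Combining this with associativity of multiplication in $\fqn$ and the independence of $1,v,w$, the aim is to show that these relations force $V^2$ to be multiplicatively closed — hence a subfield $\F_{q^{2k-1}}$ containing $V$ with $V\cdot V=\F_{q^{2k-1}}$ — and then to derive a contradiction from that. I expect this last passage to be the main obstacle. Some evidence that $k\geqslant 3$ is essential and that the endgame is delicate: for $k=2$ the value $2k-1$ genuinely occurs, e.g. $V=\langle 1,a\rangle_{\fq}\subseteq\F_{q^3}$ with $1,a,a^2$ $\fq$-independent is a Sidon space with $\dim_{\fq}(V^2)=3$; so the argument must genuinely use the third independent direction provided by $k\geqslant 3$, and the final elimination — ruling out a $k$-dimensional Sidon space $V$ with $V\cdot V=\F_{q^{2k-1}}$ when $k\geqslant 3$ — will require a counting or degree argument rather than a one-line manipulation.
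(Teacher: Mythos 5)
This statement is not proved in the paper at all: it is quoted verbatim from Bachoc--Serra--Z\'emor (\cite[Theorem 18]{bachoc2017analogue}), so the only comparison available is with the proof in that reference. Your first step is correct and is indeed the standard argument: for $\fq$-independent $v,w\in V$ the Sidon property forces $vV\cap wV=\langle vw\rangle_{\fq}$, hence $\dim_{\fq}(vV+wV)=2k-1$ and $\dim_{\fq}(V^2)\geqslant 2k-1$; your $k=2$ example showing that $2k-1$ is attained is also right.

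The second half, however, is a genuine gap, and you say so yourself. Everything after ``suppose $\dim_{\fq}(V^2)=2k-1$'' is a programme, not a proof: the rigidity relations $V^2=vV+wV$ are correctly derived, but the passage from them to ``$V^2$ is multiplicatively closed'' is never carried out, and even if it were, a $k$-dimensional Sidon space sitting inside a degree-$(2k-1)$ subfield is not an immediate contradiction --- you would still need a separate counting or packing argument to exclude it. The missing idea is the linear analogue of Vosper's theorem, which is the main result of the cited paper: under the appropriate hypotheses, a subspace $V$ with $\dim_{\fq}(V^2)=2\dim_{\fq}(V)-1$ must be of the form $g\langle 1,a,a^2,\dots,a^{k-1}\rangle_{\fq}$, i.e.\ it has a basis in geometric progression. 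Once that structure is known, $k\geqslant 3$ finishes the proof in one line, since $g,ga,ga^2\in V$ and $(ga)(ga)=g\cdot(ga^2)$ with $\{\fq ga,\fq ga\}\neq\{\fq g,\fq ga^2\}$ violates the Sidon condition. That structural theorem is a deep result (a $q$-analogue of Vosper's characterization of critical pairs, proved via a Kneser-type stabilizer analysis), and it is not recoverable from the elementary product manipulations you sketch; so the proposal, as it stands, only establishes the weaker bound $2k-1$.
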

Thus we are able to prove the following characterization result. Note that the \textbf{norm} over $\fq$ of an element $a \in \mathbb{F}_{q^k}$ is $\mathrm{N}_{q^k/q}(a)=a^{\frac{q^k-1}{q-1}}$.

\begin{theorem}
Let $\gamma\in\mathbb{F}_{q^{2k}}\setminus\fqk$ and let $s\in\mathbb{N}$ such that $\gcd(s,k)=1$. Then $V_{x^{q^s},\gamma}$ is a Sidon space if and only if $\mathrm{N}_{q^{2k}/q}(\gamma)\neq 1$. 
\end{theorem}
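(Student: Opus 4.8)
The plan is to work with the polynomial $f(x) = x^{q^s}$ and translate the Sidon space property into a concrete condition on $\gamma$ using the machinery developed earlier. First I would compute the relevant subspaces. By Remark~\ref{rem:Sx0,fdim} (or directly), for $x_0 \in \fqk^*$ the space $S_{x_0,f}$ is the kernel of the $q$-polynomial $\lambda \mapsto (\lambda x_0)^{q^s} - \lambda x_0^{q^s} = x_0^{q^s}(\lambda^{q^s} - \lambda)$, which is just $\fq$ since $\gcd(s,k)=1$. So the ``$\gamma$-free'' obstruction vanishes and everything happens at the level of the $S_{x,f}^\gamma \subseteq \F_{q^{2k}}$. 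I would then analyze $S_{x_0,f}^\gamma = \{\mu \in \F_{q^{2k}} : \mu(x_0 + x_0^{q^s}\gamma) \in V_{f,\gamma}\}$: writing $\mu = a + b\gamma$ with $a,b \in \fqk$ and using a minimal polynomial $\gamma^2 = \alpha\gamma + \beta$ of $\gamma$ over $\fqk$, the membership condition $\mu(x_0 + x_0^{q^s}\gamma) = u + u^{q^s}\gamma$ becomes a system in $a, b$ whose solution space I would show is always $2$-dimensional over $\fq$ (it always contains $\fq$, and the monomial structure forces one extra dimension governed by $\gamma$, $\beta$ and the Frobenius $q^s$). Concretely I expect $S_{x_0,f}^\gamma = \langle 1, \xi(x_0)\rangle_{\fq}$ for an explicit $\xi(x_0)$ involving $\beta$, $\gamma$ and $x_0^{q^s - 1}$.

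With $\dim_{\fq} S_{x_0,f}^\gamma = 2$ for all $x_0$, Corollary~\ref{CorollarioCaratterizzazioneDim2} applies: $V_{f,\gamma}$ is a Sidon space if and only if $S_{x_0,f}^\gamma \neq S_{x_1,f}^\gamma$ whenever $x_0, x_1$ are not $\fq$-proportional. Since both spaces contain $\fq$ and have dimension $2$, this is equivalent to $\xi(x_0)$ and $\xi(x_1)$ being $\fq$-independent modulo $\fq$ for non-proportional $x_0, x_1$ — i.e. the map $x_0\fq \mapsto \fq\text{-coset of }\xi(x_0)$ is injective on the projective line. I would reduce the computation of $\xi(x_0)$: the extra solution $\mu$ of the membership system, after eliminating $u$, should satisfy a relation forcing $\mu/\gamma$ (or a similar ratio) into a coset determined by $x_0^{q^s-1}$ and the norm data of $\gamma$. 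The coincidence $S_{x_0,f}^\gamma = S_{x_1,f}^\gamma$ for non-proportional $x_0, x_1$ will then be seen to be equivalent to an equation of the form $\mathrm{N}_{q^k/q}\!\big((x_0/x_1)^{q^s-1}\big) \cdot (\text{something in }\gamma) = 1$, and running over all pairs this collapses to $\mathrm{N}_{q^{2k}/q}(\gamma) = 1$ (using $\mathrm{N}_{q^{2k}/q}(\gamma) = \mathrm{N}_{q^k/q}(\mathrm{N}_{q^{2k}/q^k}(\gamma)) = \mathrm{N}_{q^k/q}(-\beta)$ when $\gamma^2 = \alpha\gamma + \beta$, up to sign conventions).

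For one direction I would also invoke Theorem~\ref{lowerboundSidon} as a clean shortcut in the ``only if'' part, at least when $k \geq 3$: if $\mathrm{N}_{q^{2k}/q}(\gamma) = 1$, I expect $V_{f,\gamma}^2 = (\fqk + \fqk\gamma)(\fqk + \fqk\gamma) = \fqk + \fqk\gamma + \fqk\gamma^2 = \fqk + \fqk\gamma$ has dimension only $2k$... actually that still meets the bound, so instead I would exhibit an explicit equal pair $S_{x_0,f}^\gamma = S_{x_1,f}^\gamma$: the norm condition is exactly what makes the coset map above fail to be injective, producing non-proportional $x_0, x_1$ with $x_0^{q^s-1}/x_1^{q^s-1}$ hitting the bad value. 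The ``if'' direction is the contrapositive of this injectivity statement. Small cases ($k \leq 2$, or $q$ small) I would check separately or note that $\gcd(s,k)=1$ handles them.

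The main obstacle I anticipate is the bookkeeping in computing $S_{x_0,f}^\gamma$ explicitly — keeping track of the minimal polynomial coefficients $\alpha, \beta$ of $\gamma$ and correctly extracting the Frobenius twist $q^s$ when solving the $2\times 2$ system over $\fqk$, and then recognizing that the resulting obstruction is precisely $\mathrm{N}_{q^{2k}/q}(\gamma) \ne 1$ rather than some coset condition that only looks weaker. In particular I need the $q^s$-power to interact with the norm so that the $x_0$-dependence genuinely cancels out over the projective line; verifying that the ``bad set'' of ratios is all of $\fqk^*$ (so that a single bad value suffices to break Sidon-ness) is the crux. Once the formula for $\xi(x_0)$ is pinned down, the rest is the injectivity-of-a-norm-map argument, which is routine.
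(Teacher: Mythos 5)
There is a genuine gap at the very first step of your plan, and it propagates through everything that follows. You claim that $S_{x_0,f}^\gamma$ is $2$-dimensional over $\fq$ for every $x_0\in\fqk^*$, but by definition $S_{x_0,f}^\gamma=\{\mu\in\mathbb{F}_{q^{2k}} : \mu(x_0+x_0^{q^s}\gamma)\in V_{f,\gamma}\}=(x_0+x_0^{q^s}\gamma)^{-1}V_{f,\gamma}$, which has $\fq$-dimension exactly $k$, since multiplication by a nonzero element is an $\fq$-linear bijection of $\mathbb{F}_{q^{2k}}$. Concretely, writing $\mu=a+b\gamma$ and $\gamma^2=\alpha\gamma+\beta$, your membership condition reduces to the single $\fqk$-equation $a^{q^s}x_0^{q^s}+\beta^{q^s}b^{q^s}x_0^{q^{2s}}=ax_0^{q^s}+bx_0+\alpha bx_0^{q^s}$ in the two $\fqk$-unknowns $(a,b)$, i.e.\ $k$ $\fq$-linear conditions on a $2k$-dimensional space, so the solution space is $k$-dimensional, not $2$-dimensional. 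Consequently Corollary \ref{CorollarioCaratterizzazioneDim2} is not applicable for $k>2$, and the Sidon property is not the statement that the spaces $S_{x_0,f}^\gamma$ are pairwise distinct: one must show that two $k$-dimensional subspaces of $\mathbb{F}_{q^{2k}}$, each containing $\fq$, meet exactly in $\fq$. Your reduction to injectivity of $x_0\fq\mapsto\xi(x_0)$ on the projective line therefore has no foundation, and the norm computation that is supposed to finish the proof is never actually pinned down.

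Moreover, you considered and then discarded the route the paper actually takes for the ``only if'' direction. Your estimate $V_{f,\gamma}^2=\fqk+\fqk\gamma$ is too coarse: expanding $(u+u^{q^s}\gamma)(v+v^{q^s}\gamma)$ with minimal polynomial $x^2+bx+c$ of $\gamma$ (so $c=\gamma^{q^k+1}$ and $\mathrm{N}_{q^k/q}(c)=\mathrm{N}_{q^{2k}/q}(\gamma)$) shows that the $\fqk$-component of every product is $uv-c(uv)^{q^s}$, which lies in the image of $T:y\mapsto y-cy^{q^s}$. When $\mathrm{N}_{q^{2k}/q}(\gamma)=1$ this image has dimension $k-1$, so $V_{f,\gamma}^2\subseteq\mathrm{Im}(T)\oplus\gamma\fqk$ has dimension at most $2k-1<2k$, and Theorem \ref{lowerboundSidon} rules out the Sidon property. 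For the converse the paper simply invokes \cite[Theorem 16]{roth2017construction}, which gives the Sidon property whenever $\mathrm{N}_{q^k/q}(c)\neq 1$; your proposed replacement for this direction rests on the same incorrect dimension count, so neither implication is actually established by the proposal as written.
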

\begin{proof}
Denote by $x^2+bx+c\in\fqk[x]$ the minimal polynomial of $\gamma$ over $\fqk$. Note that $c=\gamma^{q^k+1}=\gamma^{\frac{q^{2k}-1}{q^k-1}}$ and so 
\[ \mathrm{N}_{q^{k}/q}(c)=\mathrm{N}_{q^{2k}/q}(\gamma). \]
Therefore 
\[  \mathrm{N}_{q^{k}/q}(c)=1 \,\, if\,\, and \,\, only\,\, if\,\, \mathrm{N}_{q^{2k}/q}(\gamma)=1. \]
If $\mathrm{N}_{q^{2k}/q}(\gamma)\neq 1$, then $\mathrm{N}_{q^{k}/q}(c)\neq 1$ and  by \cite[Theorem 16]{roth2017construction}, it follows that $V_{x^{q^s},\gamma}$ is a Sidon space.\\
Conversely, suppose that $\mathrm{N}_{q^{2k}/q}(\gamma)=\mathrm{N}_{q^{k}/q}(c)=1$. Then 
\begin{equation}
\label{spanproduct}
\begin{aligned}
  V_{x^{q^s},\gamma}^2&=\langle(u+u^{q^s}\gamma)(v+v^{q^s}\gamma)\colon u,v\in\fqk \rangle_{\fq}\\
  &=\langle (uv-(uv)^{q^s}c)+(uv^{q^s}+u^{q^s}v-b(uv)^{q^s})\gamma\colon u,v\in\fqk \rangle_{\fq}. 
\end{aligned}
\end{equation}
Therefore, by \eqref{spanproduct} it follows that
\[
V_{x^{q^s},\gamma}^2\subseteq \mathrm{Im}(T)\oplus\gamma\fqk
\]
where $T:y\in\fqk \mapsto y-y^{q^s}c\in\fqk$.\\
Since $\mathrm{N}_{q^{k}/q}(c)=1$ then $\dim_{\fq}(\mathrm{Im}(T))=k-1$ and so $V_{x^{q^s},f}^2$ is contained in an $\fq$-subspace of dimension $2k-1$ of $\mathbb{F}_{q^{2k}}$. Therefore, by Theorem \ref{lowerboundSidon} it follows that $V_{x^{q^s},\gamma}^2$ cannot be a Sidon space.
\end{proof}

We resume in Table \ref{scattpoly} the list of the known examples of scattered polynomials belonging to $\mathcal{L}_{k,q}$, different entries of the tables correspond to $\Gamma \mathrm{L}(2,q^k)$-inequivalent subspaces $U_f=\{(x,f(x)) \,:\, x\in \fqk\}$. 

\begin{table}[htp]
\tabcolsep=0.2 mm
\begin{tabular}{|c|c|c|c|c|c|}
\hline
\hspace{0.5cm} & \hspace{0.2cm}$k$\hspace{0.2cm} & $f(x)$ & \mbox{Conditions} & \mbox{References} \\ \hline
i) & & $x^{q^s}$ & $\gcd(s,k)=1$ & \cite{blokhuis2000scattered} \\ \hline
ii) & & $x^{q^s}+\delta x^{q^{s(k-1)}}$ & $\begin{array}{cc} \gcd(s,k)=1,\\ \mathrm{N}_{q^k/q}(\delta)\neq 1 \end{array}$ & \cite{lunardon2001blocking,lavrauw2015solution}\\ \hline
iii) & $2\ell$ & $\begin{array}{cc}x^{q^s}+x^{q^{s(\ell-1)}}+\\ \delta^{q^\ell+1}x^{q^{s(\ell+1)}}+\delta^{1-q^{2\ell-1}}x^{q^{s(2\ell-1)}}\end{array}$ & $\begin{array}{cc} q \hspace{0.1cm} \text{odd}, \\ \mathrm{N}_{q^{2\ell}/q^\ell}(\delta)=-1,\\ \gcd(s,\ell)=1 \end{array}$ & $\begin{array}{cc}\text{\cite{BZZ,longobardi2021large}}\\\text{\cite{longobardizanellascatt,neri2022extending,ZZ}}\end{array}$\\ \hline
iv) & $6$ & $x^q+\delta x^{q^{4}}$  &  $\begin{array}{cc} q>4, \\ \text{certain choices of} \, \delta \end{array}$ & \cite{csajbok2018anewfamily,bartoli2021conjecture,polverino2020number} \\ \hline
v) & $6$ & $x^{q}+x^{q^3}+\delta x^{q^5}$ & $\begin{array}{cccc}q \hspace{0.1cm} \text{odd}, \\ \delta^2+\delta =1 \end{array}$
 & \cite{csajbok2018linearset,MMZ} \\ \hline
vi) & $8$ & $x^{q}+\delta x^{q^5}$ & $\begin{array}{cc} q\,\text{odd},\\ \delta^2=-1\end{array}$ & \cite{csajbok2018anewfamily} \\ \hline
\end{tabular}
\caption{Known examples of scattered polynomials $f$}
\label{scattpoly}
\end{table}

\begin{remark}
From Table \ref{scattpoly} we get (up to equivalence, as we will see later) all the known examples of Sidon spaces contained in the sum of two multiplicative cosets of $\fqk$; for instance \cite[Construction 11]{roth2017construction} belongs to Family i) of Table \ref{scattpoly}.
\end{remark}

\begin{remark}
In the monomial case, being scattered corresponds to being a Sidon space polynomial. Indeed, $f(x)=ax^{q^s} \in \mathcal{L}_{k,q}$ with $\gcd(s,k)=1$ is scattered and hence a Sidon space polynomial.
Suppose that $\gcd(s,k)>1$ and write $s=s' t$, where $t=\gcd(k,s)>1$.
Then $V_{f,\gamma}$ is an $\F_{q^t}$-subspace of $\F_{q^k}(\gamma)$ and hence it cannot be a Sidon space because of Theorem \ref{lem:charSidon}.
\end{remark}

We can give sufficient conditions in order to ensure that the binomial $f(x)=x^{q^i}+\delta x^{q^j}$ is a Sidon space polynomial.

\begin{proposition}
\label{Sidon caso i-j,k}
Let $f(x)=x^{q^i}+\delta x^{q^j}\in\mathcal{L}_{k,q}$, with $\gcd(k,j-i)=1$ and $j>i\geq 1$. Then $f$ is a Sidon space polynomial.
\end{proposition}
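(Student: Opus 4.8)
The goal is to show that $f(x)=x^{q^i}+\delta x^{q^j}$ with $\gcd(k,j-i)=1$ and $j>i\geq 1$ is a Sidon space polynomial, i.e. for any $x_0,x_1\in\fqk$ that are not $\fq$-proportional we have $S_{x_0,f}\cap S_{x_1,f}=\fq$. By Remark \ref{rem:Sx0,fdim}, for a fixed $x_0\ne 0$ the set $S_{x_0,f}$ is the kernel of the linearized polynomial
\[ g_{x_0}(\lambda)=f(\lambda x_0)-\lambda f(x_0)=x_0^{q^i}\lambda^{q^i}+\delta x_0^{q^j}\lambda^{q^j}-(x_0^{q^i}+\delta x_0^{q^j})\lambda, \]
seen as a $q$-polynomial in $\lambda$. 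The plan is to first bound $\dim_{\fq}(S_{x_0,f})$ and then argue that two such kernels, for non-proportional $x_0,x_1$, cannot share a nontrivial element outside $\fq$.

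The first step is to estimate $\dim_{\fq}S_{x_0,f}$. Dividing $g_{x_0}(\lambda)$ by $\lambda$ (which is harmless for counting roots as long as we keep track of the zero root) and relabelling, $g_{x_0}$ has exactly three exponents $1, q^i, q^j$; after composing with $x\mapsto x^{q^{-i}}$ (a bijection) we may bring it to the form $a_0\lambda + a_1\lambda^{q^{j-i}} + a_2\lambda^{q^{k-i}}$ type shape — more precisely, writing $s=j-i$, the relevant three-term $q$-polynomial has $q$-degree pattern governed by $s$ with $\gcd(s,k)=1$, so the bound \eqref{eq:boundnumroots} applies and yields $\dim_{\fq}(S_{x_0,f})\leq 2$. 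This is the key structural input: since $g_{x_0}$ is essentially a trinomial whose exponent gaps are multiples of an $s$ coprime to $k$, its kernel has $\fq$-dimension at most $2$. Hence Corollary \ref{CorollarioCaratterizzazioneDim2} (together with Remark \ref{rk:corwithnogamma}) reduces the problem: it suffices to show that there do \emph{not} exist non-$\fq$-proportional $x_0,x_1$ with $S_{x_0,f}=S_{x_1,f}$ both of dimension $2$.

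The second step handles this remaining case by direct computation. Suppose $S_{x_0,f}=S_{x_1,f}=W$ with $\dim_{\fq}W=2$; pick $\lambda\in W\setminus\fq$. Then $f(\lambda x_0)=\lambda f(x_0)$ and $f(\lambda x_1)=\lambda f(x_1)$ give, after expanding $f(x)=x^{q^i}+\delta x^{q^j}$ and cancelling,
\[ (\lambda^{q^i}-\lambda)x_0^{q^i} = -\delta(\lambda^{q^j}-\lambda)x_0^{q^j}, \qquad (\lambda^{q^i}-\lambda)x_1^{q^i} = -\delta(\lambda^{q^j}-\lambda)x_1^{q^j}. \]
Since $\lambda\notin\fq$ and $W$ has dimension $2$, the kernel of $\mu\mapsto\mu^{q^i}-\mu$ meets $W$ in $\fq$ only, so $\lambda^{q^i}-\lambda\ne 0$; dividing the two equations gives $(x_0/x_1)^{q^i}=(x_0/x_1)^{q^j}$, i.e. $(x_0/x_1)^{q^i(q^{j-i}-1)}=1$, so $x_0/x_1$ lies in $\F_{q^{j-i}}\cap\fqk=\fq$ because $\gcd(j-i,k)=1$. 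This contradicts the assumption that $x_0,x_1$ are not $\fq$-proportional, and we are done.

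The main obstacle I expect is the bookkeeping in the first step: one must carefully verify that $g_{x_0}(\lambda)/\lambda$ genuinely has the three-exponent shape to which the bound \eqref{eq:boundnumroots} applies — in particular that the leading coefficient $\delta x_0^{q^j}$ and the constant term do not vanish (they don't, since $\delta\ne 0$ and $x_0\ne 0$, noting $f$ has $q$-degree $j$ so $\delta\ne 0$ is part of the hypothesis), and that after the normalizing substitution the gap indices are honest multiples of $s=j-i$ with $\gcd(s,k)=1$. Everything after that reduction is the short computation above, which is routine. One should also dispose of the degenerate subcase $x_0=0$ or $x_1=0$ trivially, since then the pair is automatically $\fq$-proportional only when the other is too, and $S_{0,f}=\fqk$ plays no role because $(0,0)$ is excluded.
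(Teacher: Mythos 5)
Your second step is, in substance, the paper's entire proof: take $\lambda\in(S_{x_0,f}\cap S_{x_1,f})\setminus\fq$, derive $(\lambda^{q^i}-\lambda)x_0^{q^i}=\delta(\lambda-\lambda^{q^j})x_0^{q^j}$ together with the analogous identity for $x_1$, divide, and conclude $x_0/x_1\in\fqk\cap\mathbb{F}_{q^{j-i}}=\fq$. That computation is correct and requires no control on $\dim_{\fq}(S_{x_0,f})$ at all, which makes your first step not only superfluous but also genuinely wrong. The claimed bound $\dim_{\fq}(S_{x_0,f})\leq 2$ is false in general: the remark immediately following Proposition \ref{Sidon caso i-j,k} in the paper exhibits, for $f(x)=x^q+\delta x^{q^{d}}$ with $k=(d-1)d+1$ (so $\gcd(k,d-1)=1$ and the hypotheses of the proposition hold), an $x_0$ with $\dim_{\fq}(S_{x_0,f})=d$, and $d$ can be made arbitrarily large. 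Your appeal to \eqref{eq:boundnumroots} fails because that bound applies to $q$-polynomials whose exponents form the arithmetic progression $0,s,2s,\dots$ with $\gcd(s,k)=1$; the trinomial $f(\lambda x_0)-\lambda f(x_0)$ has exponent set $\{0,i,j\}$, which after your normalization becomes $\{0,\,j-i,\,k-i\}$ --- this equals $\{0,s,2s\}$ only when $j=2i$, and writing it in base $s=j-i$ in general forces the top index up to $-i\,s^{-1}\bmod k$, yielding a useless bound. Consequently the reduction via Corollary \ref{CorollarioCaratterizzazioneDim2} to the case of two equal $2$-dimensional stabilizers is not available.

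There is a second, smaller gap inside your step 2: the justification that $\lambda^{q^i}-\lambda\ne 0$ (``the kernel of $\mu\mapsto\mu^{q^i}-\mu$ meets $W$ in $\fq$ only'') is unfounded, since that kernel is $\mathbb{F}_{q^{\gcd(i,k)}}$ and the proposition does not assume $\gcd(i,k)=1$, so nothing prevents a $2$-dimensional $W$ from meeting it in more than $\fq$. The correct argument, which is the one the paper uses, is: if $\lambda^{q^i}=\lambda$, then since $\delta x_0^{q^j}\ne 0$ the first identity forces $\lambda^{q^j}=\lambda$, hence $\lambda^{q^{j-i}}=\lambda$ and $\lambda\in\fqk\cap\mathbb{F}_{q^{j-i}}=\fq$, contradicting $\lambda\notin\fq$. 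With that repair made and step 1 deleted, your argument coincides with the paper's proof.
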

\begin{proof}
Suppose by contradiction that $f$ is not a Sidon space polynomial, i.e. there exist $x_0,x_1\in\fqk$ such that $x_0,x_1$ are not $\fq$-proportional and $\dim_{\fq}(S_{x_0,f}\cap S_{x_1,f})\geq 2$. Let $\lmb\in (S_{x_0,f}\cap S_{x_1,f})\setminus\fq$, then
\[ f(\lambda x_0)=\lambda f(x_0)\,\,\,\text{and}\,\,\,f(\lambda x_1)=\lambda f(x_1), \]
that is
\[(\lmb^{q^i}-\lmb)x_0^{q^i}=\delta x_0^{q^j}(\lmb-\lmb^{q^j})\,\,\,\text{and}\,\,\,(\lmb^{q^i}-\lmb)x_1^{q^i}=\delta x_1^{q^j}(\lmb-\lmb^{q^j}).\]
If $\lmb-\lmb^{q^i}=0$ then $\lmb^{q^j}-\lmb=0$, i.e. $\lmb^{q^{j-i}}=\lmb$ and so $\lmb\in\fqk\cap\mathbb{F}_{q^{j-i}}=\fq$ since $\gcd(k,j-i)=1$ and this is not possible. Therefore $\lmb-\lmb^{q^i}\neq 0$ and so we get
\[x_0^{q^i-q^j}=\delta\frac{\lmb-\lmb^{q^j}}{\lmb^{q^i}-\lmb}
\,\,\,\text{and}\,\,\,
x_1^{q^i-q^j}=\delta\frac{\lmb-\lmb^{q^j}}{\lmb^{q^i}-\lmb}
.\]
Thus we have
\[x_0^{q^i-q^j}=x_1^{q^i-q^j},\]
i.e. $\frac{x_0}{x_1}\in\fqk\cap\mathbb{F}_{q^{j-i}}=\fq$, that is a contradiction.
\end{proof}

Proposition \ref{Sidon caso i-j,k} gives a sufficient condition to determine if a linearized binomial defines a Sidon space polynomial, however this is not also necessary since the examples iv) and vi) in Table \ref{scattpoly} have the property that the difference of the exponents of the $q$-th powers that appear divides $k$ and since they are scattered, by Theorem \ref{scattered implica caratterizzazione}, they are also Sidon space polynomials.
We will now show that in the case in which $(i-j,k)\ne 1$, a binomial as in Proposition \ref{Sidon caso i-j,k} gives a Sidon space polynomial if and only if the binomial is also scattered.
\begin{theorem}\label{th:binsidoniffscattered}
Let $f(x)=x^{q^i}+\delta x^{q^j}\in\mathcal{L}_{k,q}$, with $\gcd(k,j-i)>1$, $\delta \ne 0$ and $j>i\geq 1$. 
Let $j=i+st$, where $s,t \in \mathbb{N}$ are such that $\gcd(s,k)=1$, $t>1$ and $t \mid k$.
Then
\begin{itemize}
    \item[i)] $f$ is a Sidon space polynomial if and only if $f$ is scattered;
    \item[ii)] if $\gcd(i,t)\ne 1$ then $f$ is not a Sidon space polynomial;
    \item[iii)] if $\N_{q^k/q^t}(\delta)=(-1)^{k/t}$ then $f$ is not a Sidon space polynomial. 
\end{itemize}
\end{theorem}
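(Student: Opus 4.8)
The three assertions are closely linked, so I would prove them together, extracting as much mileage as possible from the defining equation of a non-Sidon configuration. As in the proof of Proposition \ref{Sidon caso i-j,k}, suppose $f$ is \emph{not} a Sidon space polynomial: there are $x_0,x_1\in\fqk$, not $\fq$-proportional, and $\lambda\in(S_{x_0,f}\cap S_{x_1,f})\setminus\fq$, so that
\[
(\lambda^{q^i}-\lambda)x_r^{q^i}=\delta\,x_r^{q^j}(\lambda-\lambda^{q^j}),\qquad r=0,1.
\]
The new ingredient compared to Proposition \ref{Sidon caso i-j,k} is that $\gcd(k,j-i)=t>1$, so from $\lambda-\lambda^{q^i}=0$ one can no longer conclude $\lambda\in\fq$; instead one only gets $\lambda^{q^{j-i}}=\lambda$, i.e.\ $\lambda\in\fqk\cap\F_{q^{j-i}}=\F_{q^t}$ (using $t\mid k$). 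So the first case to dispatch is: if some such bad $\lambda$ lies in $\F_{q^t}\setminus\fq$, then in fact $f$ is already not scattered, because $\lambda\in S_{x_r,f}$ with $\lambda\notin\fq$ gives $\dim_{\fq}(S_{x_r,f})\ge 2$, contradicting \eqref{eq:scattcondSa,f} — and conversely if $f$ is not scattered we directly get a bad $\lambda$. This handles one direction of i) quickly: $f$ scattered $\Rightarrow$ $f$ Sidon space polynomial is already Theorem \ref{scattered implica caratterizzazione}; for the reverse, I would show that if $f$ is a Sidon space polynomial then the only possible bad $\lambda$ must satisfy $\lambda-\lambda^{q^i}\ne 0$, and then run the argument below to produce a contradiction.

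So assume $\lambda^{q^i}\ne\lambda$. Dividing the two equations, $x_0^{q^i-q^j}=x_1^{q^i-q^j}$, hence $(x_0/x_1)^{q^i(q^{st}-1)}=1$, so $x_0/x_1\in\F_{q^{\gcd(k,st)}}=\F_{q^t}$ (again $t\mid k$). Write $x_1=\mu x_0$ with $\mu\in\F_{q^t}^*$; non-$\fq$-proportionality forces $\mu\notin\fq$. Feeding $x_1=\mu x_0$ back and using $\mu^{q^i}$: the two relations become compatible only if $\mu^{q^i}=\mu$... more precisely, subtracting appropriately one finds a constraint forcing $\mu\in\F_{q^{\gcd(i,t)}}$. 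This is where ii) drops out: if $\gcd(i,t)=1$ we would get $\mu\in\fq$, a contradiction, so in that regime no bad pair exists unless the $\lambda^{q^i}=\lambda$ branch (equivalently non-scattered) is active — which is exactly statement i); and if $\gcd(i,t)=d>1$, one can reverse-engineer an explicit bad configuration (take $\mu$ generating $\F_{q^d}$ over $\fq$, then solve for $x_0$ and $\lambda$ from the single remaining equation $x_0^{q^i-q^j}=\delta(\lambda-\lambda^{q^j})/(\lambda^{q^i}-\lambda)$, which is solvable because the right-hand side can be made to hit a $(q^i-q^j)$-th power), giving ii).

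For iii), the idea is to use the norm obstruction in the spirit of the scattered-binomial theory (cf.\ entry ii) of Table \ref{scattpoly} and \cite{lunardon2001blocking,lavrauw2015solution}). Since $t\mid k$, view $f$ as related to a binomial over $\F_{q^t}$ after an appropriate substitution $y=x^{q^i}$, rewriting the scatteredness/Sidon condition as: $y^{q^{st}-1}=\delta^{-1}\cdot(\text{something})$ has a solution with $y\notin\F_{q^t}^*$-coset structure. The classical computation shows that $f$ fails to be scattered precisely when a certain norm equals a prescribed value; concretely $\N_{q^k/q^t}(\delta)=(-1)^{k/t}$ is exactly the condition under which the map $z\mapsto z+\delta z^{q^{st}}$ (or its relevant twist) over $\F_{q^k}$, regarded as $\F_{q^t}$-semilinear, has a nontrivial kernel of the wrong size — equivalently there exist $\F_{q^t}$-proportional-but-not-$\fq$-proportional elements in some $S_{x_0,f}$. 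So under the norm hypothesis one exhibits a bad $\lambda\in\F_{q^t}\setminus\fq$, which by the first paragraph both witnesses non-scatteredness and, more to the point, witnesses that $f$ is not a Sidon space polynomial. Thus the ``hard'' step is really the bookkeeping that turns the pair of bilinear relations into a single norm equation over $\F_{q^t}$ and correctly tracks when its solution set is ``too big'': I expect this norm computation — and making sure the $(-1)^{k/t}$ sign comes out right — to be the main obstacle, while the case analysis on $\gcd(i,t)$ in ii) and the scattered $\Leftrightarrow$ Sidon dichotomy in i) are comparatively routine once the first paragraph's reduction ($\lambda\in\F_{q^t}$, $x_0/x_1\in\F_{q^t}$) is in place.
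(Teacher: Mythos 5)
Your overall strategy (reduce everything to the pair of relations $(\lambda^{q^i}-\lambda)x_r^{q^i}=\delta x_r^{q^j}(\lambda-\lambda^{q^j})$ and split on whether $\lambda^{q^i}=\lambda$) points in a reasonable direction, but two of the three parts do not go through as written. For i), the content beyond Theorem \ref{scattered implica caratterizzazione} is the implication ``$f$ Sidon space polynomial $\Rightarrow$ $f$ scattered'', i.e.\ ``not scattered $\Rightarrow$ not Sidon''. You assert that if $f$ is not scattered ``we directly get a bad $\lambda$'', but non-scatteredness only hands you a single $x_0$ with $\dim_{\fq}(S_{x_0,f})\geq 2$; to violate the Sidon space property you must exhibit a \emph{second} point $x_1$, not $\fq$-proportional to $x_0$, with $S_{x_0,f}\cap S_{x_1,f}\supsetneq\fq$. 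The missing (and essential) observation is that for $\rho\in\F_{q^t}\setminus\fq$ (which exists because $t>1$) one has $S_{\rho x_0,f}=S_{x_0,f}$, since $\rho^{q^{i+st}}=\rho^{q^i}$ gives $f(\lambda\rho x_0)-\lambda f(\rho x_0)=\rho^{q^i}\bigl(f(\lambda x_0)-\lambda f(x_0)\bigr)$; taking $x_1=\rho x_0$ then kills the Sidon property. This is exactly how the paper proves i).

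For ii) your mechanism is actually false: once $x_1=\mu x_0$ with $\mu\in\F_{q^t}$, the relation for $r=1$ is identically $\mu^{q^i}$ times the relation for $r=0$ (again because $\mu^{q^j}=\mu^{q^{i+st}}=\mu^{q^i}$), so the two relations are \emph{always} compatible and impose no further constraint such as $\mu\in\F_{q^{\gcd(i,t)}}$; the dichotomy on $\gcd(i,t)$ cannot be extracted from there. The paper's proof of ii) is structural: $\ell=\gcd(i,t)>1$ divides both $i$ and $j=i+st$, so $V_{f,\gamma}$ is an $\F_{q^\ell}$-subspace and hence is not a Sidon space by Theorem \ref{lem:charSidon}. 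For iii) your outline is in the right spirit but leaves the decisive step (which you yourself flag as ``the main obstacle'') undone: in the paper the norm condition $\N_{q^k/q^t}(\delta)=(-1)^{k/t}$ is used to show that $x^{q^i(1-q^{ts})}=-\delta$ is solvable, whence $\ker(f)=x_0\F_{q^t}$ has $\fq$-dimension $t>1$ and cannot be a Sidon space, contradicting Remark \ref{rk:kernelsidon}. As it stands the proposal establishes only the direction of i) already covered by Theorem \ref{scattered implica caratterizzazione}.
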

\begin{proof}
i) We need to prove that if $f$ is a Sidon space polynomial then $f$ is scattered.
By contradiction, assume that $f$ is not scattered, hence there exists $x_0 \in \fqk^*$ such that
\[ \dim_{\fq}(S_{x_0,f})>1. \]
Consider $\rho \in \F_{q^t} \setminus \fq$ and let $x_1=\rho x_0$, then it is easy to check that 
\[ S_{x_0,f}=\{ \lambda \in \fqk \colon -\lambda (x_0^{q^i}+\delta x_0^{i+st})+\lambda^{q^i}x_0^{q^i}+\delta \lambda^{q^{i+st}}x_0^{q^{i+st}}=0 \} \]
and
\[ S_{x_1,f}=\{ \lambda \in \fqk \colon -\lambda (x_1^{q^i}+\delta x_1^{i+st})+\lambda^{q^i}x_1^{q^i}+\delta \lambda^{q^{i+st}}x_1^{q^{i+st}}=0 \}\]
coincide and hence $S_{x_0,f}\cap S_{x_1,f}=S_{x_0,f} \supset \fq$, a contradiction to the fact that $f$ is a Sidon space polynomial.\\
ii) If $\ell=\gcd(i,t)\ne 1$ then $\ell \mid j$ and so $V_{f,\gamma}$ is an $\F_{q^\ell}$-subspace of $\fqn$ and hence by Theorem \ref{lem:charSidon} it is not a Sidon space.\\
iii) If $\N_{q^k/q^t}(\delta)=(-1)^{k/t}$, then the equation $x^{q^i(1-q^{ts})}=-\delta$ admits a solution $x_0$ in $\F_{q^k}$ and the set of its solutions is $\{ x_0 \epsilon \colon \epsilon \in \F_{q^t}^* \}$. This implies that  $\ker(f)=\{x \in \F_{q^k} \colon x^{q^i}=-\delta x^{q^{i+ts}}\}=x_0 \F_{q^t}$ and hence $f$ is not a Sidon space polynomial by Remark \ref{rk:kernelsidon}.
\end{proof}
\begin{remark}
In the previous result, we characterize the property of a binomial of being a Sidon space polynomial in terms of scattered polynomials, when $i-j$ is not coprime with $k$. Therefore, classification results on binomials that are scattered can be given now for Sidon space polynomials. 
A special family considered in the scattered polynomials framework is
\[ f(x)=x^{q^i}+\delta x^{q^{i+t}}, \]
where $\gcd(i,k)=1$ and $k=2t$, originally introduced in \cite{csajbok2018anewfamily}.
In \cite{bartoli2021conjecture} and \cite{polverino2020number}, there is an explicit condition on $\delta$ in order to characterize the scattered $f$ when $t=3$. When $t=4$ in \cite[Theorem 1.1]{timpanella2023family}, $f$ is scattered if and only if $\delta^{q^4+1}=-1$ when $q$ is a large odd prime power. Moreover, in \cite[Theorem 1.1]{polverino2021certain} it has been proved that if $k$ is larger than $4i+2$ then $f$ is not scattered.
\end{remark}

Thanks to the Proposition \ref{Sidon caso i-j,k} we are able to introduce new examples of Sidon spaces via some linearized polynomials which are not scattered.

\begin{corollary}
Let $f(x)=x^{q^s}+\delta x^{q^{2s}} \in \mathcal{L}_{k,q}$ with $\gcd(s,k)=1$, then $f$ is a Sidon space polynomial.
\end{corollary}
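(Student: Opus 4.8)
The plan is to recognize this corollary as an immediate special case of Proposition \ref{Sidon caso i-j,k}. We are given $f(x) = x^{q^s} + \delta x^{q^{2s}}$ with $\gcd(s,k) = 1$, and we want to show $f$ is a Sidon space polynomial. Matching notation with Proposition \ref{Sidon caso i-j,k}, which treats $f(x) = x^{q^i} + \delta x^{q^j}$ with $\gcd(k, j-i) = 1$ and $j > i \geq 1$, we set $i = s$ and $j = 2s$. Then $j - i = s$, and the hypothesis $\gcd(s,k) = 1$ gives exactly $\gcd(k, j-i) = 1$; moreover $i = s \geq 1$ and $j = 2s > s = i$, so both conditions of the proposition are satisfied.

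First I would state explicitly that we apply Proposition \ref{Sidon caso i-j,k} with the substitution $i = s$, $j = 2s$. Then I would verify the two hypotheses: $j > i \geq 1$ holds since $s \geq 1$ (note $\gcd(s,k)=1$ forces $s \geq 1$, and $2s > s$), and $\gcd(k, j - i) = \gcd(k, s) = 1$ holds by assumption. Having checked the hypotheses, the conclusion of the proposition is precisely that $f$ is a Sidon space polynomial, which is what we want. There is essentially no obstacle here; the only thing to be mildly careful about is that the proposition requires $i \geq 1$, so we should note that $s = 0$ would make $f$ not even a genuine two-term situation (and $\gcd(0,k) = k \neq 1$ unless $k = 1$), so the hypothesis $\gcd(s,k)=1$ together with the implicit assumption $s \geq 1$ in writing $x^{q^s}$ covers this.

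\begin{proof}
This is a direct consequence of Proposition \ref{Sidon caso i-j,k}. Write $f(x) = x^{q^i} + \delta x^{q^j}$ with $i = s$ and $j = 2s$. Since $\gcd(s,k) = 1$ we have $s \geq 1$, so $j = 2s > s = i \geq 1$. Moreover $\gcd(k, j - i) = \gcd(k, s) = 1$. Hence the hypotheses of Proposition \ref{Sidon caso i-j,k} are satisfied, and we conclude that $f$ is a Sidon space polynomial.
\end{proof}

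I would expect the author's proof to be exactly this short reduction; the substantive work has already been done in Proposition \ref{Sidon caso i-j,k}, and this corollary merely records the natural specialization $j = 2i$ (with $i = s$) that produces Sidon space polynomials which are generally not scattered, thereby furnishing the ``new examples'' advertised in the sentence preceding the statement.
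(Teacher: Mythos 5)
Your proof is correct and is exactly the paper's intended argument: the corollary is stated immediately after Proposition \ref{Sidon caso i-j,k} as its direct specialization $i=s$, $j=2s$, and the paper offers no further proof because the hypothesis check $\gcd(k,j-i)=\gcd(k,s)=1$ is immediate. Your verification of the hypotheses, including the remark that $s\geq 1$, is all that is needed.
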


\begin{remark}
When $s=1$ and $k\geq 5$, $f(x)=x^{q}+\delta x^{q^{2}}$ with $\delta \ne 0$ is not scattered; see \cite[Remark 3.11]{zanella2019condition} and also \cite{montanucci2022class}.
\end{remark}

\begin{remark}
The polynomials considered in Proposition \ref{Sidon caso i-j,k} can be very far from being scattered, that is it may exist $x_0$ for which the dimension of $S_{x_0,f}$ is large. To this aim recall from \cite[Theorem 1.1]{mcguiretrinomials} (see also \cite[Theorem 1.3]{santonastaso2022linearized}) that if $k=(d-1)d+1$ and $q=p^h$ (for some prime $p$ and $h \in \mathbb{N}$) then $\dim_{\fq}(\ker(ax+bx^q-x^{q^d}))=d$ if and only if 
\[
\left\{
\begin{array}{lll}
\mathrm{N}_{q^k/q}(a)=(-1)^{d-1},\\
b=-a^{qf_1}\,\,\,\text{where}\,\,\, f_1=\sum_{i=0}^{d-1} q^{id},\\
d-1\,\, \text{is a power of }p.
\end{array}
\right.
\]
Therefore, choosing $d=p^\ell+1$, $q=p^h$, $k=(d-1)d+1$, $j=d$ and $i=1$ by the above result we have the existence of a linearized polynomial of the form $mx+ x^q+\delta x^{q^{d}}$ such that 
\[ \dim_{\fq}(\ker(mx+ x^q+\delta x^{q^{d}}))=d, \]
and by Remark \ref{rem:Sx0,fdim} this means that there exists $x_0 \in \fqk^*$ such that 
\[ \dim_{\fq}(S_{f,x_0})=d, \]
where $f(x)=x^q+\delta x^{q^{d}}$ and satisfies the assumptions of Proposition \ref{Sidon caso i-j,k}.
\end{remark}

Let now see another example which regards the Lunardon-Polverino polynomials which, under certain assumptions, are Sidon space polynomials also in the case in which they are not scattered.

\begin{corollary}
Let $f(x)=x^{q^s}+\delta x^{q^{s(k-1)}}\in \mathcal{L}_{k,q}$ with $\gcd(s,k)=1$. Then $f$ is a Sidon space polynomial if and only if either $\mathrm{N}_{q^k/q}(\delta)\neq 1$ or $k$ is odd.  
\end{corollary}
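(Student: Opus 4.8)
The plan is to reduce the statement about $f(x)=x^{q^s}+\delta x^{q^{s(k-1)}}$ to facts already established in the excerpt, treating the two directions separately. Since $\gcd(s,k)=1$, the exponent gap is $s(k-1)-s=s(k-2)$, whose $\gcd$ with $k$ is $\gcd(s(k-2),k)=\gcd(k-2,k)=\gcd(2,k)$. So when $k$ is odd this gap is coprime to $k$, and Proposition~\ref{Sidon caso i-j,k} applies directly (after noting the proposition's hypothesis $j>i\geq 1$ is met here, possibly after rewriting $x^{q^s}+\delta x^{q^{s(k-1)}}$ with the roles of the two terms swapped, i.e. writing it as $\delta(x^{q^{s(k-1)}}+\delta^{-1}x^{q^s})$, which is a scalar multiple and hence has the same $S_{x,f}$ spaces). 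This immediately gives the ``$k$ odd'' half of the ``if'' direction without any reference to $\mathrm{N}_{q^k/q}(\delta)$.

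For the remaining cases $k$ even, I would invoke Theorem~\ref{th:binsidoniffscattered}: here $j-i=s(k-2)$ has $\gcd$ with $k$ equal to $2>1$, so we are in the regime $\gcd(k,j-i)>1$, and part (i) of that theorem says $f$ is a Sidon space polynomial if and only if $f$ is scattered. But the scattered binomials of exactly this shape are precisely entry ii) of Table~\ref{scattpoly}: $x^{q^s}+\delta x^{q^{s(k-1)}}$ is scattered if and only if $\mathrm{N}_{q^k/q}(\delta)\neq 1$ (the Lunardon--Polverino polynomials). Combining, for $k$ even $f$ is a Sidon space polynomial iff $\mathrm{N}_{q^k/q}(\delta)\neq 1$; together with the odd case (where it is always a Sidon space polynomial, and the condition ``$\mathrm{N}_{q^k/q}(\delta)\neq 1$ or $k$ odd'' is automatically satisfied) this yields exactly the claimed equivalence.

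To make the argument airtight I would also want to handle the boundary: the exponent $s(k-1)$ must be interpreted modulo $k$ (it equals $-s \bmod k$), and one should check this does not collapse $f$ to a monomial — it does not, since $\delta\neq 0$ is implicit in the interesting case (if $\delta=0$ then $f=x^{q^s}$ is scattered, $\mathrm{N}_{q^k/q}(0)=0\neq 1$, and $f$ is a Sidon space polynomial, consistent with the statement). I expect the main obstacle to be purely bookkeeping: verifying that the hypothesis $\gcd(i,t)=1$ is not secretly needed here — in Theorem~\ref{th:binsidoniffscattered} part (i) only requires $\gcd(s,k)=1$, $t>1$, $t\mid k$, with $j=i+st$; in our situation $i=s$, $t=k-2$, and $s(k-2)=j-i$ forces $t=k-2$ only if $s=1$, so more care is needed: one should instead write the gap as $s(k-2)=s t'$ with $t'=\gcd(k-2,k)=2$ absorbing the part coprime to $k$, i.e. take $t=2$ and $s'=s(k-2)/2$ as the "$s$" of the theorem (which is coprime to $k$ since $k$ is even means $k/2$ and $(k-2)/2$ differ by $1$, and $\gcd(s(k-2)/2,k)$... ) — this small number-theoretic check that the reparametrised step satisfies $\gcd(s',k)=1$ is the one place where I would slow down and verify carefully before citing Theorem~\ref{th:binsidoniffscattered}.
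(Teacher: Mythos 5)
Your proof is correct and is essentially the paper's own: the case $k$ odd is handled by Proposition \ref{Sidon caso i-j,k} (the exponent gap $s(k-1)-s=s(k-2)$ is coprime to $k$ exactly when $k$ is odd), and the case $k$ even by Theorem \ref{th:binsidoniffscattered}(i) together with the fact that the Lunardon--Polverino binomial is scattered if and only if $\mathrm{N}_{q^k/q}(\delta)\neq 1$ (note that Table \ref{scattpoly} only records the ``if'' direction of that criterion; the reverse implication is the result of Zanella cited in the paper's proof, so you do need that external reference and not just the table). As for the number-theoretic check you left dangling: it genuinely fails as literally stated --- for $k\equiv 2\pmod 4$, e.g.\ $k=6$, $s=1$, the reduced exponents are $1$ and $5$ and the gap $4$ admits no factorization $s't$ with $t\mid 6$, $t>1$ and $\gcd(s',6)=1$ --- but this is harmless, because the proof of part (i) of Theorem \ref{th:binsidoniffscattered} uses only the existence of some $t>1$ with $t\mid k$ and $t\mid j-i$ (so that $\rho\in\F_{q^t}\setminus\fq$ gives $x_1=\rho x_0$ with $S_{x_1,f}=S_{x_0,f}$), and $t=2$ always works when $k$ is even; the hypothesis $\gcd(s',k)=1$ is never invoked in that part. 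So you may apply the theorem exactly as the paper does, at the cost of observing that its part (i) really only requires $\gcd(k,j-i)>1$.
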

\begin{proof}
By \cite[Theorem 3.4]{zanella2019condition}, $f$ is a scattered polynomial if and only if $\mathrm{N}_{q^k/q}(\delta)\neq 1$.
So, if $\mathrm{N}_{q^k/q}(\delta)\neq 1$ then Theorem \ref{scattered implica caratterizzazione} implies that $f$ is a Sidon space polynomial.\\
If $k$ is odd, since $\gcd(k,s-s(k-1))=1$, Proposition \ref{Sidon caso i-j,k} implies that $f$ is a Sidon space polynomial.
If $k$ is even, then Theorem \ref{th:binsidoniffscattered} implies that $f$ is a Sidon space polynomial if and only if $f$ is scattered, and hence if and only if $\mathrm{N}_{q^k/q}(\delta)\neq 1$.
\end{proof}

\section{Direct sum of Sidon spaces}

In \cite[Lemma 3.1]{li2021cyclic}, the authors proved that, under certain circumstances, the sum of two Sidon spaces is a Sidon space as well.
So, \cite[Lemma 3.1]{li2021cyclic} can be stated as follows: if $U$ and $V$ are two  Sidon spaces of $\fqn$ satisfying the following two properties:
\begin{itemize}
    \item $(U+V)^2=U^2\oplus UV\oplus V^2$;
    \item $\dim_{\fq}(U \cap \alpha V) \leq 1$ for each $\alpha \in \F_{q^n}^*$,
\end{itemize}
then $U+V$ is a Sidon space of $\fqn$. The main problem is that it is not easy to find two Sidon spaces satisfying both these assumptions.
In this section we will weaken the assumptions and we will prove a similar result, but the direct sum will be contained in an extension of $\fqn$.



\begin{theorem}\label{thm:directsumSidon}
Let $V_1 \in \mathcal{G}_q(n,k_1)$, $V_2 \in \mathcal{G}_q(n,k_2)$ be two distinct Sidon spaces. Suppose that $V_1$ and $V_2$ satisfy 
\begin{equation}\label{eq:mutuallySidon}
\dim_{\fq}(V_1 \cap \alpha V_2) \leq 1\,\,\, \text{for each}\,\,\, \alpha \in \F_{q^n}^*.
\end{equation} 
Let $m$ be a multiple of $n$ such that $\frac{m}{n}>2$. Let $\delta$ be a root of an irreducible polynomial over $\F_{q^n}$ of degree $\frac{m}{n}$. Then 
\[V=\{v_1+\delta v_2:v_1 \in V_1,v_2 \in V_2\}\] 
is a Sidon space of $\mathcal{G}_q(m,k_1+k_2)$.
\end{theorem}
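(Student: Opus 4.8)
The plan is to use the Sidon space characterization via intersections of dilates (Theorem \ref{lem:charSidon}, or equivalently the original definition), and to exploit the fact that $\delta$ has degree $>2$ over $\F_{q^n}$ so that a product $v_1+\delta v_2$ lives in a "triangular" position: the $\delta^0$-component, the $\delta^1$-component, and the $\delta^2$-component behave independently. Concretely, I would write an element of $V$ as a pair $(v_1,v_2)\in V_1\times V_2$ and, for $a=v_1+\delta v_2$, $c=w_1+\delta w_2$ nonzero, study when $\dim_{\fq}(V\cap \lambda V)\geq 2$ for some $\lambda\in\F_{q^m}\setminus\fq$, aiming to contradict that. Since $V\subseteq V_1\oplus\delta V_2$ and this sum is direct over $\F_{q^n}$, the first thing to pin down is the structure of $\lambda$.

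First I would take $\lambda\in\F_{q^m}$ with $\dim_{\fq}(V\cap\lambda V)\geq 2$ and show $\lambda$ must in fact lie in $\F_{q^n}$. Write $\lambda=\lambda_0+\delta\lambda_1+\delta^2\lambda_2+\cdots$ in the $\F_{q^n}$-basis $\{1,\delta,\delta^2,\dots\}$. If $v_1+\delta v_2\in V$ and $\lambda(v_1+\delta v_2)\in V=V_1\oplus\delta V_2$, then expanding and comparing $\delta^j$-components for $j\geq 2$ (using $\frac{m}{n}>2$ so that $\delta^2$ is genuinely an independent basis vector, modulo the minimal polynomial which only feeds back at degree $\frac mn$) forces strong constraints; in particular the "$\delta^2$ and higher" part of $\lambda$ acting on $v_1$ and $v_2$ must vanish for all the relevant $v_1,v_2$. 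I expect this to force $\lambda\in\F_{q^n}$ (this is exactly the phenomenon already used in Theorem \ref{thm:charwithNOgamma} and Corollary \ref{cor:sidonspacegamma>2}: degree $>2$ kills the $\gamma$-dependence). The routine bookkeeping here is the analogue of System \eqref{eq:systconduu'vv'}.

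Once $\lambda\in\F_{q^n}$, the condition $\lambda(V_1\oplus\delta V_2)\cap(V_1\oplus\delta V_2)$ having $\fq$-dimension $\geq 2$ splits, by comparing the $1$- and $\delta$-components, into: there exist $\fq$-independent pairs giving $\lambda v_1, \lambda w_1 \in V_1$ and $\lambda v_2,\lambda w_2\in V_2$. If $\lambda\notin\fq$, then $\dim_{\fq}(V_1\cap\lambda V_1)\leq 1$ and $\dim_{\fq}(V_2\cap\lambda V_2)\leq 1$ because $V_1,V_2$ are Sidon spaces. Combining the two one-dimensional intersections, the only way to get a $2$-dimensional intersection in $V$ is to have one dimension coming from each of $V_1$ and $V_2$; that is, there are nonzero $v_1\in V_1$, $v_2\in V_2$ with $\lambda v_1\in V_1$ and $\lambda v_2\in V_2$, i.e. $v_1\in V_1\cap\lambda^{-1}V_1$ and $v_2\in V_2\cap\lambda^{-1}V_2$. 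But then $v_1\in V_1$ and $\lambda v_1 = v_1'\in V_1$, and separately $v_2, \lambda v_2\in V_2$; reassembling, $v_1+\delta v_2$ and $\lambda^{-1}(v_1'+\delta\,\lambda v_2\cdot\lambda^{-1})$... more cleanly: the element $v_1 + \delta v_2\in V$ and $\lambda^{-1}v_1' + \delta\lambda^{-1}(\lambda v_2)$ — I would instead directly produce from such data a violation of hypothesis \eqref{eq:mutuallySidon}. Indeed from $\lambda v_1\in V_1$ and $\lambda v_2 \in V_2$ we get $v_2 \in V_2$ and $\lambda v_1 \in V_1$, so $\lambda \in$ (ratio set), and one extracts $\dim_{\fq}(V_1\cap\beta V_2)\geq 2$ for a suitable $\beta\in\F_{q^n}^*$ built from $v_1,v_1',v_2$ — contradicting \eqref{eq:mutuallySidon}.

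**The main obstacle** I anticipate is the first step: cleanly showing $\lambda\in\F_{q^n}$, since one has to handle the feedback of the minimal polynomial of $\delta$ at degree $\frac mn$ and argue that the intersection being $2$-dimensional (rather than just nonzero beyond $\fq$) gives enough equations. The hypothesis $\frac mn>2$ is exactly what makes $\delta^2$ independent of $1,\delta$, mirroring the role of "degree $>2$" in Theorem \ref{thm:charwithNOgamma}; I would model the argument closely on that proof's case analysis. A secondary, more bookkeeping-type difficulty is organizing the final contradiction so that the two separate one-dimensional intersections (in $V_1$ and in $V_2$) are correctly recombined into a single violation of \eqref{eq:mutuallySidon}; care is needed because $V_1$ and $V_2$ may have different dimensions and the "distinct" hypothesis is used to rule out the degenerate case $V_1=V_2$ where $\lambda$ could act as a scalar simultaneously. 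I would also double-check at the outset that $V$ has the right dimension $k_1+k_2$, which is immediate from directness of $V_1\oplus\delta V_2$ over $\F_{q^n}$.
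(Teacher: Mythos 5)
Your overall architecture matches the paper's: reduce to multipliers in $\F_{q^n}$ using that $[\F_{q^m}:\F_{q^n}]>2$, then combine the Sidon property of $V_1$, $V_2$ with hypothesis \eqref{eq:mutuallySidon}. Your second step is essentially correct and is in substance the paper's proof (the paper writes $V=V_{V_1\times V_2,\delta}$, invokes Theorem \ref{thm:charwithNOgamma}, and verifies that $V_1\times V_2$ has the Sidon space property over $\F_{q^n}$ by exactly the kind of recombination you describe: two one-dimensional intersections $V_1\cap\lambda^{-1}V_1$ and $V_2\cap\lambda^{-1}V_2$ yield nonzero $v_1,v_2$ with $v_1,\lambda v_1\in V_1\cap\beta V_2$ for $\beta=v_1/v_2$, contradicting \eqref{eq:mutuallySidon}).

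The genuine gap is in your first step. It is not true that $\dim_{\fq}(V\cap\lambda V)\geq 2$ forces $\lambda\in\F_{q^n}$ by degree considerations alone: for example, with $\lambda=\delta$ one checks directly that $\delta\,(V_1\cap V_2)\subseteq V\cap\delta V$, so if one dropped hypothesis \eqref{eq:mutuallySidon} (whose instance $\alpha=1$ bounds $\dim_{\fq}(V_1\cap V_2)$ by $1$) this intersection could be $2$-dimensional with $\lambda\notin\F_{q^n}$. So the "$\lambda\notin\F_{q^n}$" case cannot be disposed of structurally as a preliminary step; it must itself consume the hypotheses on $V_1,V_2$. This is precisely how Theorem \ref{thm:charwithNOgamma} is organized: its proof does not show that the multiplier $\mu$ lies in the base field, but rather runs a case analysis for $\mu\notin\F_{q^k}$ that repeatedly uses the base-field condition $S_{(u,v)}\cap S_{(u',v')}=\fq$ as a hypothesis. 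The correct fix is therefore not to prove "$\lambda\in\F_{q^n}$" first, but to observe $V=V_{V_1\times V_2,\delta}$ and cite Theorem \ref{thm:charwithNOgamma} as a black box, reducing the whole theorem to the $\F_{q^n}$-level statement that $V_1\times V_2$ has the Sidon space property; your step two then finishes the argument. With that reorganization your proof becomes the paper's proof.
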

\begin{proof}
We start by noting that
\[ V=V_{V_1\times V_2,\delta}. \]
By Theorem \ref{thm:charwithNOgamma}, it is enough to prove that $V_1\times V_2 \subseteq \fqn\times \fqn$ has the Sidon space property, that is for any $(u_1,u_2),(u_1',u_2') \in V_1\times V_2$ which are not $\fq$-proportional we have
\[ S_{(u_1,u_2)}\cap S_{(u_1',u_2')}=\fq. \]
Let fix $(u_1,u_2),(u_1',u_2') \in V_1\times V_2$ which are not $\fq$-proportional and suppose that there exists $\rho \in \F_{q^n}\setminus \fq$ which is in $S_{(u_1,u_2)}\cap S_{(u_1',u_2')}$.
This implies the existence of $(w_1,w_2),(w_1',w_2') \in V_1\times V_2$ such that
\[ \rho(u_1,u_2)=(w_1,w_2)\,\,\,\text{and}\,\,\,\rho(u_1',u_2')=(w_1',w_2'), \]
so
\begin{equation}\label{eq:systcondrhouvu'v'}
\left\{
\begin{array}{llll}
\rho u_1=w_1,\\
\rho u_2=w_2,\\
\rho u_1'=w_1',\\
\rho u_2'=w_2'.
\end{array}
\right.
\end{equation}
and from last equation we get that either $u_2'=w_2=0$ or $u_2',w_2'$ are both non zero.\\ 
\textbf{Case 1:} Suppose that $u_2'$ and $w_2'$ are both nonzero. Then from System \eqref{eq:systcondrhouvu'v'} we get
\begin{equation}\label{eq:systconddivisioneu'v'}
\left\{
\begin{array}{lll}
\frac{u_1}{u_2'}=\frac{w_1}{w_2'}=\lmb,\\
 \frac{u_2}{u_2'}=\frac{w_2}{w_2'}=\mu,\\
 \frac{u_1'}{u_2'}=\frac{w_1'}{w_2'}=\xi.
\end{array}
\right.
\end{equation}
which implies that
\begin{equation}\label{eq:conddimensionisidon}
\begin{aligned}
&u_1,w_1\in V_1\cap \lmb V_2,\\
&u_2,w_2\in V_2\cap\mu V_2\\
&u_1',w_1'\in V_1\cap\xi V_2.  
\end{aligned}
\end{equation}
If $(u_1',w_1')=(u_1,w_1)=(0,0)$ then $(u_2,w_2)\neq(0,0)$. Moreover, if $\mu\in\fq$ then from System \eqref{eq:systconddivisioneu'v'} it follows that $(u_1,u_2)=(0,u_2)=\mu(0,u_2')=\mu(u_1',u_2')$, i.e. $(u_1,u_2)$ and $(u_1',u_2')$ are $\fq$-proportional, a contradiction. If $\mu\notin\fq$, since $V_2$ is a Sidon space, from Equation \eqref{eq:conddimensionisidon} it follows that $u_2$ and $w_2$ are $\fq$-proportional, i.e. $\rho\in\fq$, a contradiction.\\
Therefore $(u_1',w_1')\neq(0,0)$ or $(u_1,w_1)\neq (0,0)$, from System \eqref{eq:systconddivisioneu'v'} this implies that $\xi\neq 0$ or $\lmb\neq 0$. \\
If $\xi\neq 0$ then by Condition \eqref{eq:mutuallySidon} on $V_1$ and $V_2$ we have that $u_1'$ and $w_1'$ are $\fq$-proportional, i.e. $\rho\in\fq$, a contradiction. \\
If $\lmb\neq 0$ then by Condition \eqref{eq:mutuallySidon} on $V_1$ and $V_2$ we have that $u_1$ and $w_1$ are $\fq$-proportional, i.e. $\rho\in\fq$, a contradiction.\\
\textbf{Case 2:} Suppose that $(u_2',w_2')=(0,0)$, then $u_1',w_1'\neq 0$, then from System \eqref{eq:systcondrhouvu'v'} (as done for \eqref{eq:systconddivisioneu'v'}) we get
\begin{equation}\label{eq:syst2conddivisioneu'v'}
\left\{
\begin{array}{ll}
\frac{u_1}{u_1'}=\frac{w_1}{w_1'}=\sigma,\\
 \frac{u_2}{u_1'}=\frac{w_2}{w_1'}=l.
\end{array}
\right.
\end{equation}
from which we derive
\begin{equation}\label{eq:conddimensionisidon2}
\begin{aligned}
&u_1,w_1\in V_1\cap \sigma V_1,\\
&u_2,w_2\in V_2\cap l V_1.
\end{aligned}
\end{equation}
If $l=0$ then $(u_2,w_2)=(0,0)$ and $u_1,w_1 \ne 0$ and from Equation \eqref{eq:conddimensionisidon2}, if $\sigma\in\fq$ then $(u_1,u_2)=(u_1,0)$ and $(u_1',u_2')=(u_1',0)$ are $\fq$-proportional, a contradiction. If $\sigma\notin\fq$, since $V_1$ is a Sidon space, we have $u_1$ and $w_1$ are $\fq$-proportional, implying that $\rho \in \fq$.\\
If $l\neq 0$, we can argue as before and get a contradiction also in this case.
\end{proof}

\begin{remark}
The above result can also be proved by using \cite[Lemma 3.1]{li2021cyclic} by taking as the two subspaces $V_1$ and $\delta V_2$, however this version comes quite naturally from our previous discussions.
\end{remark}

\begin{remark}
Let observe that if $V_1=V_2=V$ then $V+\delta V$ is not a Sidon space. Indeed, let $\alpha\in\fqn\setminus \fq$ such that $T=V\cap \alpha V$ has dimension $1$ (which always exists if we require that $U$ is a strictly $\fq$-subspace). Consider now $T+\delta T$, which has dimension $2$, and is contained in $(V+\delta V)\cap \alpha (V+\delta V)$ and hence $V+\delta V$ cannot be a Sidon space.
\end{remark}

As a consequence of Theorem \ref{thm:directsumSidon}, we can show some examples of Sidon spaces, which are contained in the sum of two multiplicative cosets of $\fqn$ (so belonging to Family 1 of the Introduction), but different from the one we constructed by using linearized polynomials in the previous section.

In order to use Theorem \ref{thm:directsumSidon}, we need to find Sidon spaces which satisfy Condition \eqref{eq:mutuallySidon}. For instance, we can consider the subspaces of \cite[Construction 37]{roth2017construction}.
They prove that the subspaces in \cite[Construction 37]{roth2017construction} satisfy the conditions of Theorem \ref{thm:directsumSidon} when $n=2k$, but it is easy to see that the proof still works when replacing $n/k>2$ and $\gamma_0$ with any element in $\fqn\setminus\fqk$.

\begin{corollary}\label{cor:doublepseudo}
Let $q\geq 3$, $n,k$ be two positive integers with $k\mid n$ and $n/k\geq 2$, let $w$ be a primitive element of $\F_{q^k}$ and let $\gamma \in \fqn\setminus \fqk$ (if $n=2k$ then see \cite[Construction 37]{roth2017construction} for the assumptions on $\gamma$).
Let $m$ be a multiple of $n$ such that $\frac{m}{n}>2$ and let $\delta$ be a root of an irreducible polynomial over $\F_{q^n}$ of degree $\frac{m}{n}$. 
Then 
\[V=\{(x+x^q\gamma)+\delta (y+y^q\gamma w): x,y \in \fqk\}\] 
is a Sidon space in $\F_{q^m}$.
\end{corollary}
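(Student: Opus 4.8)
The plan is to realize $V$ as an instance of the direct-sum construction of Theorem \ref{thm:directsumSidon} applied to the two subspaces
\[
V_1=\{x+x^q\gamma : x\in\fqk\}\quad\text{and}\quad V_2=\{y+y^q\gamma w : y\in\fqk\},
\]
both of which live in $\F_{q^n}$, and then invoke Theorem \ref{thm:directsumSidon} with the field extension $\F_{q^m}/\F_{q^n}$ of degree $m/n>2$ determined by the chosen $\delta$. With this identification, $V=\{v_1+\delta v_2 : v_1\in V_1,\ v_2\in V_2\}$ is exactly the subspace in the statement. So the corollary will follow once we check the hypotheses of Theorem \ref{thm:directsumSidon}: (a) $V_1$ and $V_2$ are Sidon spaces of $\F_{q^n}$; (b) they are distinct; and (c) the mutual condition \eqref{eq:mutuallySidon}, namely $\dim_{\fq}(V_1\cap\alpha V_2)\le 1$ for every $\alpha\in\F_{q^n}^*$.

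For (a), note $V_1=V_{x^q,\gamma}$ and $V_2=V_{x^q w,\gamma}$ (after absorbing $w$, $V_2$ is the graph-type subspace associated with the linearized monomial $y\mapsto w^{?}\dots$; more precisely $V_2=V_{g,\gamma}$ with $g$ obtained from $x^q$ by an $\fqk$-scaling, which is still scattered since scaling the graph of a scattered monomial keeps it scattered). Both $x^q$ and its scalings are scattered polynomials in $\mathcal{L}_{k,q}$ because $\gcd(1,k)=1$ (Table \ref{scattpoly}, Family i); hence by Corollary \ref{fscatteredimplicaSidon} — valid since here the base extension $\F_{q^n}/\fqk$ can be taken of degree $>2$, or alternatively by appeal directly to \cite[Construction 37]{roth2017construction} which already establishes the Sidon property — $V_1$ and $V_2$ are Sidon spaces of $\F_{q^n}$. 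For (b), distinctness is immediate because $w\ne 1$ (it is primitive and $q\ge 3$): the element $1+\gamma\in V_1$ is not of the form $y+y^q\gamma w$ for any $y$ (comparing the $1$- and $\gamma$-components over $\fqk$ would force $y=1$ and $y^q w=1$, i.e. $w=1$). The content of the corollary is really the observation, already implicit in the text preceding it, that the proof in \cite[Construction 37]{roth2017construction} that $V_1$ and $V_2$ satisfy \eqref{eq:mutuallySidon} was only written for $n=2k$ with a special $\gamma_0$, but the argument never uses $n/k=2$ nor the precise choice of $\gamma$ beyond $\gamma\notin\fqk$; so (c) holds verbatim for any $\gamma\in\fqn\setminus\fqk$ when $n/k\ge 2$ (with the stated caveat for $n=2k$).

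Concretely, to verify (c) one takes $\alpha\in\F_{q^n}^*$ and supposes $\dim_{\fq}(V_1\cap\alpha V_2)\ge 2$; writing a generic element of the intersection as $x+x^q\gamma=\alpha(y+y^q\gamma w)$ and using that $\{1,\gamma\}$ is an $\fqk$-basis-extension appropriately (when $n>2k$ one compares more coordinates; when $n=2k$ one uses the minimal polynomial of $\gamma$), one obtains that the map $x\mapsto y$ on the relevant subspace is $\fq$-linear of rank $\ge 2$ and that $x^q/y^q$ is, up to the fixed factor coming from $\alpha$ and $w$, forced to be constant in $\fqk$ — which after taking $q$-th roots forces $x/y\in\fqk$ constant on a $2$-dimensional $\fq$-space, contradicting that $x^q$ is scattered (equivalently, that $V_1$ and $V_2$ are genuinely Sidon and not proportional). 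This is exactly the computation carried out in \cite{roth2017construction} for \cite[Construction 37]{roth2017construction}. The main obstacle — and it is a mild one — is bookkeeping: making sure the coordinate comparison argument for \eqref{eq:mutuallySidon} is insensitive to whether $n/k=2$ or $n/k>2$ and to the choice of $\gamma$, which amounts to checking that \cite{roth2017construction}'s argument used only $\gamma\notin\fqk$. Once that is in hand, Theorem \ref{thm:directsumSidon} applies directly and yields that $V$ is a Sidon space of $\mathcal{G}_q(m,k_1+k_2)$ with $k_1=k_2=k$, i.e. a $2k$-dimensional Sidon space of $\F_{q^m}$, completing the proof.
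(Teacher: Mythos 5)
Your proposal is correct and follows essentially the same route as the paper: the paper also obtains this corollary by applying Theorem \ref{thm:directsumSidon} to the two subspaces $V_1=\{x+x^q\gamma\}$ and $V_2=\{y+y^q\gamma w\}$ of \cite[Construction 37]{roth2017construction}, delegating the verification of the Sidon property and of Condition \eqref{eq:mutuallySidon} to that reference together with the observation that its argument only uses $\gamma\notin\fqk$ and so extends to $n/k\ge 2$. Your additional explicit checks (scatteredness of the scaled monomial, distinctness via $w\ne 1$) are consistent with, and slightly more detailed than, what the paper records.
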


In particular, if $m$ is odd, the examples of the above corollary is contained in $\F_{q^n}+\delta \F_{q^n}$ and has dimension $2k$, but $\F_{q^{2k}}$ is not a subfield of $\F_{q^m}$ and hence it is different from the construction exhibited in the previous section.

\section{Equivalence of Sidon spaces}

As already explained in the Introduction, since Sidon spaces and cyclic subspace codes with a certain minimum distance are equivalent objects, it is quite natural to give a definition of equivalence for Sidon spaces arising from the equivalence of subspace codes; see \cite{zullo2023multi}. 
The study of the equivalence for subspace codes was initiated by Trautmann in \cite{trautmann2013isometry} and the case of cyclic subspace codes has been investigated in \cite{gluesing2021automorphism} by Gluesing-Luerssen and Lehmann.
Therefore, motivated by \cite[Theorem 2.4]{gluesing2021automorphism}, we say that two cyclic subspace codes $\mathrm{Orb}(U)$ and $\mathrm{Orb}(V)$ are \textbf{linearly equivalent} if there exists $i \in \{0,\ldots,n-1\}$ such that 
\[ \mathrm{Orb}(U)=\mathrm{Orb}(V^{q^i}), \]
where $V^{q^i}=\{ v^{q^i} \colon v \in V \}$.
This happens if and only if $U=\alpha V^{q^i}$, for some $\alpha \in \fqn^*$.
We can replace the action of the Frobenius maps $x\in \fqn\mapsto x^{q^i}\in \fqn$ with any automorphism in $\mathrm{Aut}(\fqn)$, since this will still preserve the metric properties of the codes.
Therefore, we say that two cyclic subspace codes $\mathrm{Orb}(U)$ and $\mathrm{Orb}(V)$ are \textbf{semilinearly equivalent} if there exists $\sigma \in \mathrm{Aut}(\fqn)$ such that 
\[ \mathrm{Orb}(U)=\mathrm{Orb}(V^{\sigma}), \]
where $V^{\sigma}=\{ v^\sigma \colon v \in V \}$ and this happens if and only if $U=\alpha V^{\sigma}$, for some $\alpha \in \fqn^*$.
Therefore, we can give the following definition of equivalence among two Sidon spaces.

\begin{definition}
Let $U$ and $V$ be two Sidon spaces in $\fqn$. Then we say that $U$ and $V$ are \textbf{semilinearly equivalent} if the associated codes $\mathrm{Orb}(U)$ and $\mathrm{Orb}(V)$ are semilinearly equivalent, that is there exist $\sigma \in \mathrm{Aut}(\fqn)$ and $\alpha \in \fqn$ such that $U=\alpha V^{\sigma}$.
In this case, we will also say that they are equivalent under the action of $(\alpha, \sigma)$.
\end{definition}

We can characterize the equivalence between two Sidon spaces which are contained in the sum of two multiplicative cosets of a subfield of $\fqn$ as follows.

\begin{theorem}\label{thm:equiv}
Let $k$ and $n$ be two positive integers such that $k \mid n$.
Let $U,W$ be two $m$-dimensional $\fq$-subspaces of $\mathbb{F}_{q^k}^2$. Consider 
\begin{center}
    $V_{U,\gamma}=\lbrace u+u'\gamma: (u,u')\in U\rbrace$\\
    $V_{W,\xi}=\lbrace w+w'\xi: (w,w')\in W\rbrace$
\end{center}
where $\gamma,\xi\in\fqn$ are such that $\lbrace 1,\gamma\rbrace$ and $\lbrace 1,\xi\rbrace$ are $\fqk$-linearly independent and suppose that they are not contained in any multiplicative coset of $\F_{q^k}$. Then $V_{U,\gamma}$ and $V_{W,\xi}$ are semilinearly equivalent under the action of $(\lmb,\sigma)\in\fqn^*\times \mathrm{Aut}(\fqn)$ if and only if there exists $A=\left(\begin{aligned}
    \begin{matrix}
        c & d \\
        a & b 
    \end{matrix}
\end{aligned}\right) \in \mathrm{GL}(2,\fqk)$ such that $\xi=\frac{a+b\gamma^\sigma}{c+d\gamma^\sigma}$, $\lmb=\frac{1}{c+d\gamma^\sigma}$ and $U^{\sigma}=\{w A \colon w \in W\}=W \cdot A$.
\end{theorem}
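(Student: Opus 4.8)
The plan is to unwind the definition of semilinear equivalence into a concrete relation between the coefficient pairs and then recognize that relation as a change of basis by a matrix of $\mathrm{GL}(2,\fqk)$. First I would suppose $V_{U,\gamma}$ and $V_{W,\xi}$ are semilinearly equivalent under $(\lambda,\sigma)$, so that $V_{U,\gamma}=\lambda (V_{W,\xi})^{\sigma}$. Applying $\sigma$ to $V_{W,\xi}$ gives $(V_{W,\xi})^{\sigma}=\{w^{\sigma}+w'^{\sigma}\gamma^{\sigma\to\xi}\}$; more precisely, since $\sigma$ is a field automorphism of $\fqn$ fixing $\fq$, $(w+w'\xi)^{\sigma}=w^{\sigma}+w'^{\sigma}\xi^{\sigma}$, so $(V_{W,\xi})^{\sigma}=V_{W^{\sigma},\xi^{\sigma}}$, where $W^{\sigma}=\{(w^{\sigma},w'^{\sigma})\colon (w,w')\in W\}$ (note $W^{\sigma}\subseteq \fqk^2$ because $\sigma$ restricts to an automorphism of $\fqk$, as $k\mid n$). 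Hence the hypothesis becomes $V_{U,\gamma}=\lambda\, V_{W^{\sigma},\xi^{\sigma}}$, i.e.\ $\{u+u'\gamma\colon (u,u')\in U\}=\{\lambda(w^{\sigma}+w'^{\sigma}\xi^{\sigma})\colon (w,w')\in W\}$ as subsets of $\fqn$.

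Next I would exploit that $\{1,\gamma\}$ is an $\fqk$-basis of a $2$-dimensional $\fqk$-subspace of $\fqn$, and similarly for $\{1,\xi^{\sigma}\}$. Write $\xi^{\sigma}=\alpha+\beta\gamma$ with $\alpha,\beta\in\fqk$; then $\beta\neq 0$, for otherwise $\xi^{\sigma}\in\fqk$, forcing $\{1,\xi\}$ (after applying $\sigma^{-1}$) to be $\fqk$-dependent, contrary to hypothesis. Also $\lambda\in\fqk+\fqk\gamma$: indeed $\lambda\cdot 1$ lies in $V_{U,\gamma}$'s ambient space only after noting that $\lambda(w^{\sigma}+w'^{\sigma}\xi^{\sigma})$ must land in $\fqk+\fqk\gamma$ for all $(w,w')\in W$, and by taking $(w,w')$ with $w'^{\sigma}=0$, $w^{\sigma}\neq 0$ we get $\lambda w^{\sigma}\in\fqk+\fqk\gamma$, hence $\lambda\in\fqk+\fqk\gamma$; write $\lambda=\mu+\nu\gamma$ with $\mu,\nu\in\fqk$. (Here one must check $W$ contains a vector with second coordinate $0$; if not, a symmetric argument using a vector with first coordinate $0$, or the fact that $W$ has a basis, handles it.) Now for each $(w,w')\in W$ expand $\lambda(w^{\sigma}+w'^{\sigma}\xi^{\sigma})=\lambda w^{\sigma}+\lambda w'^{\sigma}(\alpha+\beta\gamma)$ and collect the $1$- and $\gamma$-components using $\lambda=\mu+\nu\gamma$ and $\gamma^2=$ (its minimal polynomial); this expresses the pair $(u,u')\in U$ corresponding to $(w,w')$ as an $\fqk$-linear function of $(w^{\sigma},w'^{\sigma})$, i.e.\ $U=W^{\sigma}\cdot A$ for an explicit $2\times 2$ matrix $A$ over $\fqk$ whose entries are polynomial in $\mu,\nu,\alpha,\beta$ and the coefficients of the minimal polynomial of $\gamma$. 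Since $U$ and $W^{\sigma}$ both have $\fq$-dimension $m$ and $A$ is invertible (it must be, since the map is a bijection $W^{\sigma}\to U$ that is $\fqk$-linear on the ambient $\fqk^2$ — or one inverts the construction directly), we get $A\in\mathrm{GL}(2,\fqk)$. Matching this $A$ with the stated parametrization $\xi=\frac{a+b\gamma^{\sigma}}{c+d\gamma^{\sigma}}$, $\lambda=\frac{1}{c+d\gamma^{\sigma}}$ is then a bookkeeping step: solving $\xi^{\sigma}=\alpha+\beta\gamma$ and $\lambda=\mu+\nu\gamma$ for $a,b,c,d$ and verifying they reproduce the displayed fractional-linear formulas (essentially $c+d\gamma^{\sigma}=\lambda^{-1}$ after applying $\sigma^{-1}$, so one works with $\gamma^{\sigma}$ in place of $\gamma$ throughout — I would phrase the whole argument with $\sigma$ applied to $U$ rather than to $W$ to make $U^{\sigma}=W\cdot A$ come out cleanly, as in the statement).

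For the converse, given $A=\begin{pmatrix} c&d\\ a&b\end{pmatrix}\in\mathrm{GL}(2,\fqk)$ with $U^{\sigma}=W\cdot A$, set $\xi=\frac{a+b\gamma^{\sigma}}{c+d\gamma^{\sigma}}$ and $\lambda=\frac{1}{c+d\gamma^{\sigma}}$ (both well-defined since $c+d\gamma^{\sigma}\neq 0$: if it were zero then $\gamma^{\sigma}\in\fqk$ and $\{1,\gamma\}$ would be $\fqk$-dependent). Then for $(w,w')\in W$ one computes directly that $\lambda\big((wc+w'a)+(wd+w'b)\xi\big)=\lambda\big((wc+w'a)+(wd+w'b)\frac{a+b\gamma^{\sigma}}{c+d\gamma^{\sigma}}\big)$; clearing the denominator with $\lambda=(c+d\gamma^{\sigma})^{-1}$ and simplifying shows this equals $w+w'\gamma^{\sigma}$ (this is the standard identity underlying the action of $\mathrm{PGL}(2,\fqk)$ on $\fqk\cup\{\gamma^{\sigma}\}$-coordinates). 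Hence $\lambda\, V_{W\cdot A,\,\xi}=V_{W,\gamma^{\sigma}}$, and since $W\cdot A=U^{\sigma}$ and $V_{U^{\sigma},\gamma^{\sigma}}=(V_{U,\gamma})^{\sigma^{-1}}$... — more cleanly: applying $\sigma$ gives $V_{U,\gamma}=(\lambda V_{W\cdot A,\xi})^{\sigma^{-1}}$ arranged so that $V_{U,\gamma}=\lambda^{\sigma}\,(V_{W,\xi})^{\sigma}$ up to relabelling, establishing semilinear equivalence under $(\lambda,\sigma)$ (or its appropriate twist). The main obstacle I anticipate is not any single hard idea but getting the $\sigma$ versus $\sigma^{-1}$ and $\gamma$ versus $\gamma^{\sigma}$ bookkeeping exactly consistent with the statement, and justifying that the degenerate configurations ($W$ or $U$ having no vector with a vanishing coordinate, or $c+d\gamma^{\sigma}=0$) cannot occur under the standing hypothesis that $\{1,\gamma\},\{1,\xi\}$ are $\fqk$-independent and the spaces are not inside a single multiplicative coset; the "not contained in a multiplicative coset" hypothesis is exactly what rules out the map collapsing $\xi$ into $\fqk$.
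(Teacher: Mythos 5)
There is a genuine gap in the forward direction. You assert ``Write $\xi^{\sigma}=\alpha+\beta\gamma$ with $\alpha,\beta\in\fqk$'', i.e.\ that $\xi^{\sigma}$ lies in the $\fqk$-space $\langle 1,\gamma\rangle_{\fqk}$. This is false in general: what the equality of the two subspaces actually forces is $\langle 1,\gamma\rangle_{\fqk}=\lambda\langle 1,\xi^{\sigma}\rangle_{\fqk}=\langle \lambda,\lambda\xi^{\sigma}\rangle_{\fqk}$, so it is $\lambda$ and $\lambda\xi^{\sigma}$ that lie in $\langle 1,\gamma\rangle_{\fqk}$, and $\xi^{\sigma}$ is only the \emph{ratio} of two such elements --- which is exactly why the conclusion of the theorem is the fractional linear expression $\xi=\frac{a+b\gamma^{\sigma}}{c+d\gamma^{\sigma}}$ rather than an affine one. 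Your misplacement of $\lambda$ then propagates: expanding $\lambda w'^{\sigma}(\alpha+\beta\gamma)$ with $\lambda=\mu+\nu\gamma$ produces a $\gamma^{2}$ term that you propose to eliminate ``using $\gamma^{2}=$ (its minimal polynomial)''; this reduction to the basis $\{1,\gamma\}$ is only possible when $[\fqk(\gamma):\fqk]=2$, whereas the theorem (and the paper's applications of it) allow arbitrary degree. The correct bookkeeping is to write $\lambda=\mu+\nu\gamma$ and $\lambda\xi^{\sigma}=\alpha+\beta\gamma$, so that $\lambda(w^{\sigma}+w'^{\sigma}\xi^{\sigma})=(\mu w^{\sigma}+\alpha w'^{\sigma})+(\nu w^{\sigma}+\beta w'^{\sigma})\gamma$ and no $\gamma^{2}$ ever appears; this is what the paper does (with $\sigma$ applied on the $U$-side, matching the statement).

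A secondary weakness: you derive $\lambda\in\fqk+\fqk\gamma$ by picking a vector of $W$ with vanishing second coordinate, but such a vector need not exist (e.g.\ $W=\{(x,f(x))\}$ with $\ker f=\{0\}$), and the parenthetical fallback is not an argument. The robust route, which the paper takes, is to compare the two $2$-dimensional $\fqk$-spaces $\lambda\fqk+\lambda\gamma^{\sigma}\fqk$ and $\fqk+\xi\fqk$ that both contain the common subspace, and to use the hypothesis that $V_{W,\xi}$ is not contained in a multiplicative coset of $\fqk$ precisely to exclude a $1$-dimensional intersection, forcing the two spaces to coincide. Your converse direction is essentially the paper's direct verification and is fine once the same $\lambda$-versus-$\lambda\xi^{\sigma}$ correction is made.
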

\begin{proof}
Suppose that $V_{U,\gamma}$ and $V_{W,\xi}$ are semilinearly equivalent, i.e. there exist $\lmb\in\fqn^*$ and $\sigma\in \mathrm{Aut}(\fqn)$ such that $\lmb V_{U,\gamma}^{\sigma}=V_{W,\xi}$. Note that
\[\lmb V_{U,\gamma}^{\sigma}=\lbrace \lmb u+\lmb u'\gamma^{\sigma}: (u,u')\in U\rbrace \subseteq \lmb\fqk+\lmb\gamma^\sigma\fqk\]
and
\[V_{W,\xi}=\lbrace w+w'\xi:(w,w')\in W\rbrace\subseteq\fqk+\xi\fqk.\]
Since $V_{W,\xi}=\lmb V_{U,\gamma}^{\sigma}$, then 
\[V_{W,\xi}=\lmb V_{U,\gamma}^{\sigma}\subseteq(\lmb\fqk+\lmb\gamma^{\sigma}\fqk)\cap(\fqk+\xi\fqk).\]
If $\dim_{\F_{q^k}}((\lmb\fqk+\lmb\gamma^{\sigma}\fqk)\cap(\fqk+\xi\fqk))=1$,  we have $\lmb V_{U,\gamma}^{\sigma}=V_{W,\xi}\subseteq (\lmb\fqk+\lmb\gamma^{\sigma}\fqk)\cap(\fqk+\xi\fqk)=\alpha \F_{q^k}$, for some $\alpha \in \fqn$, this is not possible because $V_{W,\xi}$ cannot be contained in a multiplicative coset of $\F_{q^k}$. Therefore, $\dim_{\F_{q^k}}(\lmb\fqk+\lmb\gamma^{\sigma}\fqk)\cap(\fqk+\xi\fqk)=2$ and $\lmb\fqk+\lmb\gamma^{\sigma}\fqk=\fqk+\xi\fqk$, i.e. 
\[\lmb\langle 1,\gamma^{\sigma}\rangle_{\fqk}=\langle 1,\xi\rangle_{\fqk}.\]
Then 
\[\xi=\lmb(a+b\gamma^{\sigma})\,\,\, \text{with}\,\,\, a,b\in\fqk\]
and
\[1=\lmb(c+d\gamma^{\sigma})\,\,\, \text{with}\,\,\,c,d\in\fqk.\]
Hence $\lmb=\frac{1}{c+d\gamma^{\sigma}}$, $\xi=\lmb(a+b\gamma^{\sigma})=\frac{a+b\gamma^{\sigma}}{c+d\gamma^{\sigma}}$ and $A=\left(\begin{aligned}
\begin{matrix}
    c & d \\
    a & b
\end{matrix}
\end{aligned}\right)$ is invertible since $\xi \notin \F_{q^k}$.
Moreover, since $\lmb V_{U,\gamma}^{\sigma}=V_{W,\xi}$ and $1, \gamma^{\sigma}$ are $\fqk$-linearly independent, it follows that
\begin{equation}
    \begin{aligned}
        \lmb^{-1}V_{W,\xi}&=\lbrace \lmb^{-1}w+\lmb^{-1}w'\xi: (w,w')\in W\rbrace \\
        &=\left\lbrace (c+d\gamma^{\sigma})w+(c+d\gamma^{\sigma})w'\frac{a+b\gamma^{\sigma}}{c+d\gamma^{\sigma}}: (w,w')\in W\right\rbrace \\
        &=\lbrace (c+d\gamma^{\sigma})w+(a+b\gamma^{\sigma})w':(w,w')\in W\rbrace,\\
    \end{aligned}
\end{equation}
since $\lmb V_{U,\gamma}^{\sigma}=V_{W,\xi}$, we get
\[U^{\sigma}=W \cdot A\]
hence we obtain the first part of the assertion.\\
Conversely, suppose that there exists $A=\left(\begin{aligned}
    \begin{matrix}
        c & d \\
    a & b 
    \end{matrix}
\end{aligned}\right) \in \mathrm{GL}(2,\fqk)$ such that $\xi=\frac{a+b\gamma^\sigma}{c+d\gamma^\sigma}$ and $U^{\sigma}=W\cdot A$. Let $\lambda=\frac{1}{c+d\gamma^\sigma}$.
Then it follows that
\begin{equation}
    \begin{aligned}
    V_{W,\xi}&=\lbrace w+w'\xi: (w,w')\in W\rbrace\\
    &=\left\lbrace w+w'\frac{a+b\gamma^{\sigma}}{c+d\gamma^{\sigma}}: (w,w')\in W\right\rbrace\\
    &=\lmb \lbrace (c+d\gamma^{\sigma})w+(a+b\gamma^{\sigma})w':(w,w')\in W\rbrace\\
    &=\lmb \lbrace (cw+aw')+(dw+bw')\gamma^{\sigma}:(w,w')\in W\rbrace \\
    &=\lmb \lbrace u^{\sigma}+u'^{\sigma}\gamma^{\sigma}: (u,u')\in U\rbrace\\
    &=\lmb V_{U,\gamma}^{\sigma}.
    \end{aligned}
\end{equation}
hence $V_{U,\gamma}$ and $V_{W,\xi}$ are semilinearly equivalent under the action of $(\lambda,\sigma)$.
\end{proof}

\begin{remark}
A first consequence of Theorem \ref{thm:equiv} is that all the known examples (\cite[Constructions 11 and 15]{roth2017construction}, \cite[Lemma 3.1]{zhang2023constructions}, \cite[Lemmas 4.1 and 4.2]{li2021cyclic} and \cite[Lemma 2.4]{zhang2023further}) of Sidon spaces contained in the sum of two multiplicative cosets of a field are equivalent to $V_{x^{q^s},\gamma}$.
For instance, consider the example shown in \cite[Lemma 2.4]{zhang2023further}, that is $V_{U,\gamma}\subseteq \fqn$ where
\[U=\{ (x,(x^{q^\ell}+ax)b) \colon x \in \fqk \},\]
for some $a,b \in \fqk$ with $b\ne 0$, $\gamma \in \fqn \setminus \fqk$ and $\gcd(\ell,k)=1$.
Then it is easy to see that
\[  U \begin{pmatrix} 
1 & -a \\0 & b^{-1}
\end{pmatrix} =U_{x^{q^\ell}}.
\]
Therefore, by Theorem \ref{thm:equiv} the subspace $V_{U,\gamma}$ is equivalent to $V_{x^{q^{\ell}},\gamma'}$ where $\gamma' =\frac{b^{-1}\gamma}{1-a\gamma}$. 
\end{remark}

Another important consequence is that every Sidon space of dimension $k$ in an extension of $\mathbb{F}_{q^k}$ which is contained in the sum of two multiplicative cosets of $\mathbb{F}_{q^k}$ is equivalent to a Sidon space defined by a linearized polynomial in $\mathcal{L}_{k,q}$.

\begin{corollary}
Let $k$ and $n$ be two positive integers such that $k \mid n$.
Let $U$ be a $k$-dimensional $\fq$-subspace of $\mathbb{F}_{q^k}^2$. Consider $V=V_{U,\gamma}$, where $\gamma \in\fqn \setminus \F_{q^k}$. Then $V$ is semilinearly equivalent to $V_{f,\xi}$ for some $f\in \mathcal{L}_{k,q}$ and $\xi \in \fqn\setminus \F_{q^k}$.
\end{corollary}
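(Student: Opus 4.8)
The plan is to reduce the statement to Lemma \ref{lem:expaspol} by finding a suitable change of variable, and then to invoke Theorem \ref{thm:equiv} to record that this change of variable is a semilinear equivalence. Concretely, the obstruction preventing us from applying Lemma \ref{lem:expaspol} directly to $U$ is that we might have $U \cap \langle (0,1)\rangle_{\F_{q^k}} \ne \{(0,0)\}$, i.e. $U$ may contain a nonzero vector of the form $(0,t)$. So the first step is to show that we can always move $U$ by an element of $\mathrm{GL}(2,\fqk)$ to a subspace $W$ with $W \cap \langle (0,1)\rangle_{\F_{q^k}} = \{(0,0)\}$.

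First I would argue that such an $A \in \mathrm{GL}(2,\fqk)$ exists. Since $\dim_{\fq}(U) = k \ge 1$, the set of $\fqk$-lines of $\fqk^2$ meeting $U$ nontrivially is nonempty but, as we will see, cannot be all of $\PG(1,q^k)$ unless $k$ is small; in any case there are only finitely many such lines (at most $q^{k-1} + \cdots + 1$, by Remark \ref{rem:weightpoint} and a dimension count, but we do not even need a sharp bound), whereas $\PG(1,q^k)$ has $q^k + 1$ points. Hence there exists an $\fqk$-line $L$ of $\fqk^2$ with $U \cap L = \{(0,0)\}$. Actually, to be safe about the edge cases, I would instead use the transitivity of $\mathrm{GL}(2,\fqk)$ on lines: pick any line $L_0$ disjoint from $U$ — or, if one insists on a fully elementary route, note that since $U \ne \fqk^2$ (as $\dim_{\fq} U = k < 2k$ when $k \ge 1$), one may simply choose coordinates so that $\langle (0,1)\rangle_{\fqk}$ avoids $U$ — and take $A \in \mathrm{GL}(2,\fqk)$ sending $\langle(0,1)\rangle_{\fqk}$ to $L_0$. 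Setting $W = U \cdot A^{-1}$ (with the row-vector convention of Theorem \ref{thm:equiv}, so that $W$ is again $\fq$-linear of dimension $k$), we get $W \cap \langle (0,1)\rangle_{\fqk} = \{(0,0)\}$.

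Next, I would apply Lemma \ref{lem:expaspol} to $W$: since $\dim_{\fq}(W) = k$ and $W \cap \langle (0,1)\rangle_{\fqk} = \{(0,0)\}$, there is $f \in \mathcal{L}_{k,q}$ with $W = U_f = \{(u,f(u)) \colon u \in \fqk\}$, and $V_{W,\xi} = V_{f,\xi}$ for any $\xi$. Now I would invoke Theorem \ref{thm:equiv} with $\sigma = \mathrm{id}$: taking $A$ as above (written $A = \left(\begin{smallmatrix} c & d \\ a & b\end{smallmatrix}\right)$), the relation $U = W \cdot A$ together with the choice $\xi = \frac{a + b\gamma}{c + d\gamma}$ and $\lambda = \frac{1}{c+d\gamma}$ shows that $V_{U,\gamma}$ and $V_{f,\xi}$ are semilinearly (in fact linearly) equivalent under $(\lambda,\mathrm{id})$. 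One small point to verify is that $\xi \notin \fqk$: this holds exactly because $A$ is invertible (equivalently, $(a,b)$ and $(c,d)$ are $\fqk$-independent), which is part of the hypothesis check in Theorem \ref{thm:equiv}; and the hypothesis there that $\{1,\gamma\}$ is not contained in a multiplicative coset of $\fqk$ is automatic here since $1 \in \fqk$ forces the coset to be $\fqk$ itself, which does not contain $\gamma$. This gives $\xi \in \fqn \setminus \fqk$ as required.

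The main obstacle — really the only nontrivial point — is the first step: guaranteeing an $\fqk$-line of $\fqk^2$ disjoint from $U$. I expect this to be genuinely easy here because $\dim_{\fq} U = k$ is small relative to $\dim_{\fq}(\fqk^2) = 2k$; the cleanest phrasing is that the $k$-dimensional $\fq$-subspace $U$, viewed projectively, is an $\F_q$-linear set of rank $k$ in $\PG(1,q^k)$, and a rank-$k$ linear set of $\PG(1,q^k)$ has at most $\frac{q^k-1}{q-1} < q^k + 1$ points, hence cannot cover all of $\PG(1,q^k)$; so some projective point, i.e. some $\fqk$-line of $\fqk^2$, is disjoint from $U$. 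Everything after that is bookkeeping with Lemma \ref{lem:expaspol} and Theorem \ref{thm:equiv}.
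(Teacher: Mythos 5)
Your proof is correct and follows essentially the same route as the paper: find an $\fqk$-line of $\fqk^2$ meeting $U$ trivially, move it to $\langle(0,1)\rangle_{\fqk}$ by an element of $\mathrm{GL}(2,q^k)$, and apply Lemma \ref{lem:expaspol} together with Theorem \ref{thm:equiv}. The only difference is that you justify the existence of the disjoint line by the self-contained counting argument that a rank-$k$ $\fq$-linear set covers at most $\frac{q^k-1}{q-1}<q^k+1$ points of $\mathrm{PG}(1,q^k)$ (your earlier parenthetical ``since $U\ne\fqk^2$'' would not suffice on its own, but this count does), whereas the paper outsources this step to a citation.
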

\begin{proof}
Note that, since $\dim_{\fq}(U)=k$ then \cite[Lemma 4.6]{polverino2023divisible} there exists at least one $(a,b)\in \F_{q^k}\times \F_{q^k}\setminus \{(0,0)\}$ such that
\[ U\cap \langle (a,b)\rangle_{\F_{q^k}}=\{(0,0)\}. \]
Consider any $\phi \in \mathrm{G L}(2,q^k)$ such that $\varphi((a,b))=(0,1)$. In this way,
\[ \phi(U)\cap \langle (0,1)\rangle_{\F_{q^k}}=\{(0,0)\}, \]
and Lemma \ref{lem:expaspol} implies the assertion.
\end{proof}

Also from Theorem \ref{thm:equiv} it follows that if $U,W \subseteq \fqk^2$ are $\fq$-subspaces of dimension $k$ satisfying the Sidon space property which are $\mathrm{\Gamma L}(2,q^k)$-inequivalent, then $V_{U,\gamma}$ and $V_{W,\gamma}$ are semilinearly inequivalent for any $\gamma \in \fqn \setminus \fqk$. Therefore, scattered polynomials in Table \ref{scattpoly} can be used to construct semilinearly inequivalent examples of Sidon spaces (via Theorem \ref{scattered implica caratterizzazione}), since different entries of the table give rise to $\mathrm{\Gamma L}(2,q^k)$-inequivalent subspaces. Moreover, every entry of Table \ref{scattpoly} may contain $\mathrm{\Gamma L}(2,q^k)$-inequivalent subspaces. 
More precisely, denote by $\Lambda (f)$ the number of $\mathrm{\Gamma L}(2,q^k)$-inequivalent subspaces to $U_f$ in the corresponding family in the Table \ref{scattpoly}. 
Family i) in Table \ref{scattpoly} is known as pseudoregulus type and
\[\Lambda(x^{q^s})=\varphi(k)/2,\]
where $\varphi$ is the Euler's totient function.
Polynomials in Family ii) in Table \ref{scattpoly} are known as Lunardon-Polverino polynomials and 
\[\Lambda(x^{q^s}+\delta x^{q^{s(k-1)}})\leq \left\{\begin{array}{cc} 
\varphi(k)\frac{q-2}2 & \text{if}\,\, 2 \nmid k,\\
& \\
\varphi(k)\frac{(q+1)(q-2)}4 & \text{if}\,\, 2 \mid k,
\end{array}\right.\]
see \cite[Section 2.2]{longobardi2021large} and for further bounds see \cite[Theorem 4.3]{tang2023automorphism}.
For the Family iii) we have
\[ \Lambda(x^{q^s}+x^{q^{s(\ell-1)}}+\delta^{q^\ell+1}+\delta^{1-q^{2\ell-1}}x^{q^{s(2\ell-1)}})=
\left\lfloor\dfrac{\varphi(\ell)(q^\ell+1)}{4r\ell(q^2+1)}\right\rfloor 
\]
if $\ell\equiv 2 \pmod{4}$, and
\[ \Lambda(x^{q^s}+x^{q^{s(\ell-1)}}+\delta^{q^\ell+1}+\delta^{1-q^{2\ell-1}}x^{q^{s(2\ell-1)}})=
\left\lfloor\dfrac{\varphi(\ell)(q^\ell+1)}{8r\ell}\right\rfloor
\]
if $\ell \not\equiv 2 \pmod{4}$, where $q=p^r$ and $p$ is prime, see \cite[Corollary 4.3]{longobardi2021large} and \cite[Theorem 4.12]{neri2022extending}.

Also, by Theorem \ref{thm:equiv}, from the same polynomial $f$ we can get inequivalent Sidon spaces $V_{f,\gamma}$ and $V_{f,\xi}$ fixing $\gamma,\xi \in \fqn \setminus \fqk$ such that $(1,\gamma),(1,\xi) \in \fqn\times\fqn$ are $\mathrm{\Gamma L}(2,q^k)$-inequivalent.
Now, we give a lower bound on the number $\Gamma(\gamma)$ of elements $\xi \in \fqn \setminus \fqk$ giving inequivalent Sidon spaces of the form $V_{f,\xi}$, where $f$ is fixed.
The elements $\xi \in \fqn \setminus \fqk$ which gives equivalent Sidon spaces to $V_{f,\gamma}$ are of the form
\[\xi = \frac{a+b\gamma^\sigma}{c+d\gamma^\sigma},\]
for some $(a,b,c,d)\in \fqk$ such that $ad-bc \ne 0$ and $\sigma \in \mathrm{Aut}(\fqn)$.
Therefore this number is greater than or equal to the size of $L$ minus one, where
\[L=\{ \langle ( a+b\gamma^\sigma,c+d\gamma^\sigma ) \rangle_{\fqn} \colon a,b,c,d \in \fqk, (a,b,c,d)\ne (0,0,0,0) \}.\]
Then $L$ is an $\fqk$-linear set of the form of those studied in \cite{jena2021linear} and hence $|L|=q^{3k}+1$ when $n/k>3$; we refer to \cite{lavrauw2015field,polverino2010linear} for surveys on linear sets.
Hence,
\[ \Gamma(\gamma) \geq q^n-q^{3k}|\mathrm{Aut}(\fqn)|. \]

For instance, the number of inequivalent Sidon spaces we can get from Family i) is 
\[ \Lambda(x^{q^s})\Gamma(\gamma)\geq \frac{\varphi(k)}{2} (q^n-q^{3k}|\mathrm{Aut}(\fqn)|). \]

\subsection{Sidon spaces as kernel of subspace polynomials}

In this section we will show that the examples of Sidon spaces we found in this paper are not equivalent to the Sidon spaces described as the kernel of a subspace trinomial, hence they are not the ones described in \cite{otal2017cyclic,santonastaso2022linearized}.

In the next proposition, we show how to read the semilinear equivalence of $\fq$-subspaces on the related subspace polynomial.
We just recall that if $F(x)=\sum_{i=0}^k a_i x^{q^i}\in \mathcal{L}_{n,q}$ and $\sigma \in \mathrm{Aut}(\fqn)$, then $F^{\sigma}(x)=\sum_{i=0}^k a_i^{\sigma} x^{q^i}$.

\begin{proposition}\label{prop:equivsubpol}
Let $V$ be a $k$-dimensional $\fq$-subspace of $\fqn$ and let $F \in \mathcal{L}_{n,q}$ be the associated subspace polynomial, that is $F$ is a monic polynomial of $q$-degree $k$ and $\ker(F)=V$. 
Every $\fq$-subspace semilinearly equivalent to $V$ is the kernel of
\[ \lambda^{q^k}F^\sigma(\lambda^{-1}x) \]
for some $\lambda \in \fqn$ and $\sigma \in \mathrm{Aut}(\fqn)$.
\end{proposition}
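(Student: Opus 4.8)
The plan is to unwind the definition of semilinear equivalence and track how the subspace polynomial transforms under the two operations involved: applying a field automorphism $\sigma$ and multiplying by a scalar $\lambda$. Let $W$ be an $\fq$-subspace semilinearly equivalent to $V$, so $W = \lambda V^{\sigma}$ for some $\lambda \in \fqn^*$ and $\sigma \in \mathrm{Aut}(\fqn)$. Since $W$ has dimension $k$ over $\fq$, it is the kernel of a unique monic subspace polynomial of $q$-degree $k$, and the job is to identify that polynomial as $\lambda^{q^k}F^{\sigma}(\lambda^{-1}x)$.

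First I would verify the two basic compatibilities. For the automorphism: if $\ker(F) = V$, then $\ker(F^{\sigma}) = V^{\sigma}$. This is because $\sigma$ is a ring automorphism commuting with the Frobenius $x \mapsto x^q$ (every automorphism of $\fqn$ is a power of Frobenius, hence commutes with it), so $F^{\sigma}(v^{\sigma}) = (F(v))^{\sigma}$ for all $v$, and $F^{\sigma}$ is again monic of $q$-degree $k$; thus $F^{\sigma}$ is the subspace polynomial of $V^{\sigma}$. For the scalar: if $G$ is a linearized polynomial with $\ker(G) = U$, then $\ker(G(\lambda^{-1}x)) = \lambda U$, since $x \mapsto \lambda^{-1}x$ is an $\fq$-linear bijection sending $\lambda U$ onto $U$; and $x \mapsto G(\lambda^{-1}x)$ is still a linearized polynomial (composition of $\fq$-linear maps). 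Applying this with $G = F^{\sigma}$ and $U = V^{\sigma}$ gives a linearized polynomial $F^{\sigma}(\lambda^{-1}x)$ whose kernel is exactly $W = \lambda V^{\sigma}$.

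The only remaining point is the normalization: the statement asserts the monic representative, so I would compute the leading coefficient of $F^{\sigma}(\lambda^{-1}x)$. Writing $F(x) = \sum_{i=0}^{k} a_i x^{q^i}$ with $a_k = 1$, we get $F^{\sigma}(\lambda^{-1}x) = \sum_{i=0}^{k} a_i^{\sigma} \lambda^{-q^i} x^{q^i}$, whose coefficient of $x^{q^k}$ is $a_k^{\sigma}\lambda^{-q^k} = \lambda^{-q^k}$. Multiplying through by $\lambda^{q^k}$ makes it monic without changing the kernel (scaling a linearized polynomial by a nonzero constant does not alter its kernel). Hence $\lambda^{q^k} F^{\sigma}(\lambda^{-1}x)$ is the monic subspace polynomial of $W$, which is what we wanted. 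Conversely, reading the same chain of equalities backwards shows every such polynomial does arise from an equivalence, so the description is exact.

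I do not expect a serious obstacle here; the content is entirely bookkeeping. The one place to be slightly careful is making sure that $F^{\sigma}(\lambda^{-1}x)$, after the substitution $x \mapsto \lambda^{-1}x$, is still genuinely a \emph{linearized} polynomial of $q$-degree $k$ (it is, because each monomial $x^{q^i}$ maps to $\lambda^{-q^i}x^{q^i}$, another $q$-monomial), and that no cancellation drops the $q$-degree — which cannot happen since the top coefficient $\lambda^{-q^k}$ is nonzero. So the proof is just the assembly of the two lemmas above plus the normalization step.
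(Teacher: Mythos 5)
Your proposal is correct and follows essentially the same route as the paper's proof, which simply observes that $\lambda^{q^k}F^\sigma(\lambda^{-1}x)$ is monic of $q$-degree $k$ and vanishes on $\lambda V^\sigma$, then invokes uniqueness of the monic subspace polynomial. You have merely spelled out the bookkeeping (compatibility with $\sigma$, the scalar substitution, and the leading-coefficient normalization) that the paper leaves as ``easy to see.''
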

\begin{proof}
Any $\fq$-subspace semilinearly equivalent to $V$ is of the form $\lambda V^{\sigma}$, for some $\lambda \in \fqn^*$ and $\sigma \in \mathrm{Aut}(\fqn)$.
Then it is easy to see that $\lambda^{q^k}F^\sigma(\lambda^{-1}x)$ is monic, it has $q$-degree $k$ and $\lambda^{q^k}F^\sigma(\lambda^{-1}u)=0$, for every $u \in \lambda V^{\sigma}$.
\end{proof}

We now prove that the examples found in \cite{roth2017construction} are not semilinearly equivalent to the examples found in \cite{otal2017cyclic,santonastaso2022linearized}.

\begin{theorem}
    \label{casomonomio}
Let $\gamma\in\fqn$ be a root of an irreducible polynomial of degree $\frac{n}{k}\geqslant 2$ over $\fqk$.
Let $\delta=1-(-1)^{k-2}\gamma^{q^{k-1}+\dots+1}$,
\[ a_0=-\delta^q,\,\,\, a_1=-(\gamma^{q^k}-\gamma),\,\,\, a_k=\delta \]
and
\[a_i=(-1)^i(\gamma^{q^{i-1}+\dots+q})(\gamma^{q^k}-\gamma),\]
for $i\in\lbrace 2,\dots,k-1\rbrace$.\\
Let $F(x)=\displaystyle\sum_{i=0}^k a_ix^{q^i}\in\mathcal{L}_{n,q}$.
Then $\ker(F)=V_{x^q,\gamma}=\lbrace u+u^q\gamma:u\in\fqk\rbrace$.
\end{theorem}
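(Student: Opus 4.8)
The goal is to show that a specific degree-$k$ (in $q$-power sense) monic linearized polynomial $F(x)=\sum_{i=0}^k a_i x^{q^i}$ has kernel exactly $V_{x^q,\gamma}=\{u+u^q\gamma : u\in\fqk\}$. The natural strategy is to verify directly that $F(u+u^q\gamma)=0$ for every $u\in\fqk$, and then invoke a dimension count: since $F$ has $q$-degree $k$, its kernel has $\fq$-dimension at most $k$ (by \eqref{eq:boundnumroots}, applied with $s=1$), and $V_{x^q,\gamma}$ has dimension exactly $k$ because $x\mapsto x^q$ is a bijection of $\fqk$; hence equality of kernel and $V_{x^q,\gamma}$ follows once the inclusion $V_{x^q,\gamma}\subseteq\ker(F)$ is established. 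So the whole content is the identity $F(u+u^q\gamma)=0$.

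To organize the computation I would not expand $F(u+u^q\gamma)$ directly, but rather \emph{reverse-engineer} the coefficients. The key observation is that $\gamma$ satisfies a degree-$\tfrac nk$ minimal polynomial over $\fqk$, but more usefully, the element $u+u^q\gamma$ can be understood via the $q$-linear operator behaviour of conjugation. Write $w=u+u^q\gamma$; then $w^q = u^q + u^{q^2}\gamma^q$, and more generally $w^{q^i}=u^{q^i}+u^{q^{i+1}}\gamma^{q^i}$. The plan is to find scalars $a_i\in\fqn$ so that $\sum_{i=0}^k a_i(u^{q^i}+u^{q^{i+1}}\gamma^{q^i})=0$ identically in $u\in\fqk$. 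Since $u\mapsto u^{q^k}=u$, the terms $u^{q^0},u^{q^1},\dots,u^{q^{k-1}}$ form a "basis" of the possible monomials (with $u^{q^k}$ wrapping around to $u$), so one collects the coefficient of each $u^{q^j}$ and sets it to zero. This yields a linear recurrence for the $a_i$: roughly, the coefficient of $u^{q^j}$ gets a contribution $a_j$ (from the $u^{q^i}$ part with $i=j$) and $a_{j-1}\gamma^{q^{j-1}}$ (from the $u^{q^{i+1}}\gamma^{q^i}$ part with $i=j-1$), plus wrap-around contributions from $i=k$ and $i=k-1$ landing back at $j=0$. Setting these to zero forces $a_j = -a_{j-1}\gamma^{q^{j-1}}$ for the intermediate indices, which telescopes to the stated formula $a_i=(-1)^i\gamma^{q+q^2+\dots+q^{i-1}}\cdot(\text{something})$; the wrap-around equations at $j=0$ fix the proportionality constant, forcing the factor $(\gamma^{q^k}-\gamma)$ to appear and giving the special values of $a_0,a_1,a_k$ together with the definition of $\delta$.

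Concretely, the steps I would carry out are: (1) compute $w^{q^i}=u^{q^i}+u^{q^{i+1}}\gamma^{q^i}$ for $0\le i\le k$, using $\gamma\in\fqn$ and $u\in\fqk$ so that $u^{q^k}=u$; (2) substitute into $F(w)=\sum a_i w^{q^i}$ and regroup as $\sum_{j=0}^{k-1}(\text{coeff}_j)\,u^{q^j}$; (3) read off that $\text{coeff}_j=0$ for $1\le j\le k-1$ gives $a_j=-a_{j-1}\gamma^{q^{j-1}}$ and check this is consistent with the claimed $a_i$ for $2\le i\le k-1$ and with $a_1$ in terms of $a_0$; (4) handle the wrap-around coefficient $\text{coeff}_0$, which receives $a_0$, the term $a_k\gamma^{q^k}$ from $i=k$ (since $u^{q^{k+1}}=u^q$... wait, one must be careful here: $u^{q^{k+1}}=u^{q}$, so the $i=k$ term contributes to $\text{coeff}_1$ as well), and verify that vanishing forces the stated $\delta=1-(-1)^{k-2}\gamma^{1+q+\dots+q^{k-1}}$; (5) conclude via the dimension argument above. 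The main obstacle, and the only genuinely delicate point, is the bookkeeping of the wrap-around terms in step (4): the indices $i=k-1$ and $i=k$ produce $u^{q^k}=u=u^{q^0}$ and $u^{q^{k+1}}=u^q=u^{q^1}$ contributions that must be merged correctly with the "generic" contributions, and it is precisely this merging that pins down $a_0$, $a_1$, $a_k$ and the value of $\delta$; everything else is a clean telescoping. One should also double-check $F$ is monic (i.e.\ $a_k=\delta$ is the leading coefficient and is nonzero, equivalently $\delta\ne 0$, which holds since $(1,\gamma)$ are $\fqk$-independent and in fact $\delta\neq 0$ is exactly what makes $F$ have $q$-degree $k$).
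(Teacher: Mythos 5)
Your proposal is correct and follows essentially the same route as the paper: prove $V_{x^q,\gamma}\subseteq\ker(F)$ by expanding $F(u+u^q\gamma)=F(u)+F(u^q\gamma)$ with $u^{q^k}=u$, observe that the choice $a_j=-a_{j-1}\gamma^{q^{j-1}}$ makes all intermediate terms telescope so that only the coefficients of $u$ and $u^q$ survive, check these vanish by the definition of $\delta$, and conclude by comparing $\dim_{\fq}(V_{x^q,\gamma})=k$ with the bound $\dim_{\fq}(\ker(F))\leq k$. The wrap-around bookkeeping you flag as the delicate point is exactly where the paper's computation concentrates, so nothing further is needed.
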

\begin{proof}
Let's start by observing that since $\dim_{\fq}(\ker(F))\leqslant k$ and moreover $\dim_{\fq}(V_{x^q,\gamma})=k$, it is sufficient to show that $V_{x^q,\gamma}\subseteq \ker(F)$.
Consider $u+u^q\gamma\in V_{x^q,\gamma}$, then
\begin{equation}\label{eq:1equationFu}
    \begin{aligned}
        F(u)= &-\delta^qu-(\gamma^{q^k}-\gamma)u^q+\gamma^q(\gamma^{q^k}-\gamma)u^{q^2}+\\
        &-\gamma^{q^2+q}(\gamma^{q^k}-\gamma)u^{q^3}+\gamma^{q^3+q^2+q}(\gamma^{q^k}-\gamma)u^{q^4}-\\
        &\vdots\\
        &+(-1)^{k-2}\gamma^{q^{k-3}+\dots+q}(\gamma^{q^k}-\gamma)u^{q^{k-2}}+\\
        &+(-1)^{k-1}\gamma^{q^{k-2}+\dots+q}(\gamma^{q^k}-\gamma)u^{q^{k-1}}+\\
        &+\delta u
    \end{aligned}
\end{equation}
and
\begin{equation}\label{eq:1equationFuq}
    \begin{aligned}
        F(u^q\gamma)= &-\delta^q\gamma u^q-(\gamma^{q^k}-\gamma)\gamma^qu^{q^2}+\\
        &+\gamma^q(\gamma^{q^k}-\gamma)\gamma^{q^2}u^{q^3}-\gamma^{q^2+q}(\gamma^{q^k}-\gamma)\gamma^{q^3}u^{q^4}+\\
        &\vdots\\
        &+(-1)^{k-3}\gamma^{q^{k-4}+\dots+q}(\gamma^{q^k}-\gamma)\gamma^{q^{k-3}}u^{q^{k-2}}\\
        &+(-1)^{k-2}\gamma^{q^{k-3}+\dots+q}(\gamma^{q^k}-\gamma)\gamma^{q^{k-2}}u^{q^{k-1}}+\\
        &+(-1)^{k-1}\gamma^{q^{k-2}+\dots+q}(\gamma^{q^k}-\gamma)\gamma^{q^{k-1}}u+\\
        &+\delta\gamma^{q^k}u^q,
    \end{aligned}
\end{equation}
since $u \in \mathbb{F}_{q^k}$.
Since $F(u+u^q\gamma)=F(u)+F(u^q\gamma)$, \eqref{eq:1equationFu} and \eqref{eq:1equationFuq} imply
\[ F(u+u^q\gamma)=(-\delta^q+(-1)^{k-1}\gamma^{q^{k-1}+q^{k-2}+\dots+q}(\gamma^{q^k}-\gamma)+\delta)u+\]
\[(-(\gamma^{q^k}-\gamma)-\delta^q\gamma+\delta\gamma^{q^k})u^q, \]
and by using that $\delta=1-(-1)^{k-2}\gamma^{q^{k-1}+\dots+1}$ we get $F(u+u^q\gamma)=0$.

\end{proof}

\begin{remark}\label{rk:xqnotrin}
Since $\gamma \notin \F_{q^k}$, then all the $q$-th powers of $x$ up to $q$-degree $k$ appear in $F$, hence $F$ is far from being a trinomial when $n>6$.
\end{remark}

\begin{corollary}\label{cor:x^qnotrin}
Let $\gamma\in\fqn$ be a root of an irreducible polynomial of degree $\frac{n}{k}\geqslant 2$ over $\fqk$.
Then $V_{x^q,\gamma}=\lbrace u+u^q\gamma:u\in\fqk\rbrace$ is not semilinearly equivalent to the examples in \cite{otal2017cyclic,santonastaso2022linearized}.
\end{corollary}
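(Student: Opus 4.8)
The plan is to combine Theorem~\ref{casomonomio} with Proposition~\ref{prop:equivsubpol} and Remark~\ref{rk:xqnotrin}. By Theorem~\ref{casomonomio}, the Sidon space $V_{x^q,\gamma}$ is the kernel of the subspace polynomial $F(x)=\sum_{i=0}^k a_i x^{q^i}\in\mathcal{L}_{n,q}$, and since $\gamma\notin\F_{q^k}$, \emph{all} coefficients $a_0,a_1,\dots,a_k$ are nonzero (this is the content of Remark~\ref{rk:xqnotrin}: $a_1=-(\gamma^{q^k}-\gamma)\ne 0$ since $\gamma\notin\fqk$, $a_k=\delta\ne 0$, $a_0=-\delta^q\ne 0$, and each intermediate $a_i$ is a product of $\gamma^{q^k}-\gamma$ with a power of $\gamma$, hence nonzero). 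Thus $F$ has exactly $k+1$ nonzero terms, which for $n>6$ (equivalently $k\ge 3$, so $k+1\ge 4$) is strictly more than three.

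Next I would invoke Proposition~\ref{prop:equivsubpol}: any $\fq$-subspace semilinearly equivalent to $V_{x^q,\gamma}$ is the kernel of a polynomial of the form $G(x)=\lambda^{q^k}F^\sigma(\lambda^{-1}x)$ for some $\lambda\in\fqn^*$ and $\sigma\in\mathrm{Aut}(\fqn)$. Writing this out, $G(x)=\sum_{i=0}^k \lambda^{q^k} a_i^\sigma \lambda^{-q^i} x^{q^i}$, so the coefficient of $x^{q^i}$ in $G$ is $\lambda^{q^k-q^i}a_i^\sigma$. Since $\lambda\ne 0$ and $\sigma$ is an automorphism, $a_i^\sigma\ne 0$ precisely when $a_i\ne 0$; hence $G$ has exactly the same support as $F$, namely all of $\{0,1,\dots,k\}$, and in particular $G$ has $k+1\ge 4$ nonzero terms. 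Therefore no subspace semilinearly equivalent to $V_{x^q,\gamma}$ can be the kernel of a subspace trinomial when $n>6$. Since the examples in \cite{otal2017cyclic,santonastaso2022linearized} are kernels of subspace trinomials, the claim follows. For the small remaining cases one would need to check by hand, but the statement of the corollary should be read in the regime where this obstruction applies (or one notes that for $k\le 2$, i.e. $n\le 6$, $V_{x^q,\gamma}$ is not $k$-dimensional with $k\ge 3$ and the trinomial constructions of \cite{santonastaso2022linearized} concern larger $q$-degree anyway).

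The only genuine subtlety is making the support argument airtight: one must confirm that passing from $F$ to $\lambda^{q^k}F^\sigma(\lambda^{-1}x)$ never \emph{cancels} a term, which is immediate since each new coefficient is a nonzero scalar multiple (by $\lambda^{q^k-q^i}\ne 0$ and $\sigma$ bijective) of the corresponding old coefficient — there is no addition of distinct terms, so no cancellation is possible. I expect the main (minor) obstacle to be simply bookkeeping the exact range of validity in $n$ and reconciling it with how the trinomial examples of \cite{otal2017cyclic,santonastaso2022linearized} are parametrized, i.e.\ ensuring that ``trinomial'' there really does mean ``three nonzero $q$-monomials'' so that the support count is a legitimate invariant distinguishing the two families under semilinear equivalence.
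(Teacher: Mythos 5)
Your proposal is correct and follows essentially the same route as the paper: the paper's proof is a one-line appeal to Theorem~\ref{casomonomio} together with Proposition~\ref{prop:equivsubpol} (and implicitly Remark~\ref{rk:xqnotrin}), and you have simply spelled out the support-preservation argument that the paper leaves implicit. Your caveat about small $k$ (where $F$ itself can be a trinomial, e.g.\ $k=2$) is a real edge case that the paper also glosses over via the ``$n>6$'' proviso of Remark~\ref{rk:xqnotrin}.
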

\begin{proof}
By Proposition \ref{prop:equivsubpol} and Theorem \ref{casomonomio}, we have that every subspace semilinearly equivalent to $V_{x^q,\gamma}$ cannot be the kernel of a subspace trinomial.
\end{proof}

In the next result we show that the class of examples found in Proposition \ref{Sidon caso i-j,k}, as for the monomial, cannot be the kernel of a linearized trinomial of $q$-degree $k$ and having kernel the Sidon spaces of Proposition \ref{Sidon caso i-j,k}.

\begin{proposition}
\label{nontrinomio}
Let $\gamma\in\fqn$ be a root of an irreducible polynomial of degree $\frac{n}{k}\geqslant 2$ over $\fqk$.
Let $V_{g,\gamma}=\lbrace u+g(u)\gamma:u\in\fqk\rbrace$ be an $\fq$-subspace of $\fqn$ where $g(x)= x^{q^i} + \delta x^{q^j} \in \mathcal{L}_{k,q}$ with $0<i<j<k$. If $F\in\mathcal{L}_{n,q}$ is a subspace polynomial with $\ker(F)=V_{g,\gamma}$ then $F$ is not a trinomial. 
\end{proposition}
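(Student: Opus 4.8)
The plan is to compute coefficients of $F$ directly from the condition $\ker F=V_{g,\gamma}$, in the spirit of Theorem~\ref{casomonomio} but without a closed formula for $F$. Since $\dim_{\fq}(V_{g,\gamma})=k$, any subspace polynomial with kernel $V_{g,\gamma}$ is a scalar multiple of its monic subspace polynomial of $q$-degree $k$, so (rescaling not changing the number of terms, and assuming throughout $\delta\neq0$ so that $g$ is genuinely a binomial) we may write $F(x)=\sum_{r=0}^{k}a_rx^{q^r}$ with $a_k=1$. A dimension count shows $\ker F=V_{g,\gamma}$ is equivalent to $F\big(u+\gamma g(u)\big)=0$ for all $u\in\fqk$. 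First I would expand $F(u+\gamma g(u))=F(u)+\sum_{r=0}^{k}a_r\gamma^{q^r}g(u)^{q^r}$, use that for $u\in\fqk$ each power $u^{q^r}$ reduces modulo $q^k$ and that $g(u)^{q^r}=u^{q^{i+r}}+\delta^{q^r}u^{q^{j+r}}$, and collect the coefficient of $u^{q^t}$ for $t=0,\dots,k-1$. Since a nonzero linearized polynomial of $q$-degree at most $k-1$ has fewer than $q^k$ roots, each such coefficient vanishes, giving for every $t\in\{0,\dots,k-1\}$ (subscripts read modulo $k$ in $\{0,\dots,k-1\}$, and $[\,\cdot\,]$ the Iverson bracket)
\[
a_t+[t=0]+a_{t-i}\,\gamma^{q^{t-i}}+[t=i]\,\gamma^{q^k}+a_{t-j}\,\delta^{q^{t-j}}\gamma^{q^{t-j}}+[t=j]\,\delta\,\gamma^{q^k}=0,
\]
which we abbreviate $(\ast_t)$; in particular each $a_t$ is an explicit function of $a_{t-i}$, $a_{t-j}$ and of whether $t\in\{0,i,j\}$.

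Now $F$ is a trinomial precisely when $S:=\{t\in\{0,\dots,k-1\}:a_t\neq0\}$ has exactly two elements, so I would assume $|S|=2$ and force a contradiction. The inhomogeneous contributions $1,\gamma^{q^k},\delta\gamma^{q^k}$ occurring in $(\ast_0),(\ast_i),(\ast_j)$ are all nonzero: indeed $\gamma^{q^m}\notin\fqk$ for every $m\geq0$, since $\gamma^{q^m}\in\fqk$ would force $\gamma^{q^k}=\gamma$, i.e.\ $\gamma\in\fqk$, against $[\fqk(\gamma):\fqk]=\frac{n}{k}\geq2$. Hence these three terms must be cancelled inside the corresponding relations, which forces each of $0,i,j$ to lie in $S\cup(S+i)\cup(S+j)$ — a strong restriction on a two-element set. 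I would then propagate nonzeroness along the recursion: if $p\in S$, then $(\ast_{p+i})$ contains the nonzero term $a_p\gamma^{q^p}$, so $a_{p+i}$ can vanish only by cancellation against an inhomogeneous term (requiring $p+i\in\{0,i,j\}$) or against $a_{p+i-j}\,\delta^{q^{p+i-j}}\gamma^{q^{p+i-j}}$ (requiring $p+i-j\in S$, hence equal to the other element of $S$); likewise with $j$ in place of $i$. Each such cancellation is an equality between explicit elements of $\fqn$ which, unwound, imposes a rigid numerical relation among $i,j,k$; running this through the finitely many two-element configurations of $S$ produces in every case a third index carrying a nonzero coefficient, contradicting $|S|=2$.

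The first step is routine bookkeeping. The real work — and the main obstacle — is the last case analysis: one must sort the two-element supports by the positions of their elements relative to $0,i,j,k-i,k-j$, and in each case rule out the ``resonances'' (such as $j\equiv2i$, $i+j\equiv k$, $2i\equiv k$) that a small support might exploit, the exclusion resting each time on $\gamma^{q^m}\notin\fqk$ and on $\fqk$-linear independence of the relevant powers of $\gamma$. I expect the cleanest organization is to start from the relations $(\ast_t)$ with $j<t<k$, which contain no inhomogeneous term and read $a_t=-a_{t-i}\gamma^{q^{t-i}}-a_{t-j}\,\delta^{q^{t-j}}\gamma^{q^{t-j}}$, in combination with $(\ast_0),(\ast_i),(\ast_j)$; these already pin down enough coefficients, the extremal case $j=k-1$ (where that range is empty) being handled by applying the same argument to $(\ast_{k-1})$ directly.
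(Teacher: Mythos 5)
Your setup is sound and is essentially the paper's: you expand $F(u+\gamma g(u))$, reduce exponents modulo $q^k$, and equate to zero the coefficient of each $u^{q^t}$; your relations $(\ast_t)$ are correct, and your observation that the inhomogeneous terms $1,\gamma^{q^k},\delta\gamma^{q^k}$ force $0,i,j\in S\cup(S+i)\cup(S+j)$ is the right starting point. But the proposal stops exactly where the proof begins. The entire content of the proposition is the case analysis you defer to ``running this through the finitely many two-element configurations,'' and that analysis is not a formality: in the \emph{resonance} configurations the support-counting and propagation arguments you describe produce no contradiction at all, because every coefficient relation closes up consistently at the combinatorial level. Concretely, in the paper's normalization $F(x)=ax+bx^{q^\ell}+x^{q^k}$ the two index sets $\mathcal{I}=\{0,i,j\}$ and $\mathcal{J}=\{\ell,i+\ell,j+\ell\}$ (mod $k$) can coincide (forcing $\ell=j$, $i+\ell\equiv 0$, $j+\ell\equiv i$, or the symmetric case $\ell=i$, $i+\ell\equiv j$, $j+\ell\equiv 0$), and there one is left with a genuine $3\times 3$ system such as $a+1+b\gamma^{q^\ell}=0$, $a\gamma+\gamma^{q^k}+b\gamma^{q^\ell}\delta^{q^\ell}=0$, $a\gamma\delta+\delta\gamma^{q^k}+b=0$, which must be eliminated by hand to reach $\gamma^{q^\ell}\delta^{2q^\ell+1}=\delta^{q^\ell}$, i.e.\ $\gamma^{q^\ell}\in\F_{q^k}$, a contradiction. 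Nothing in your outline performs (or even identifies the shape of) this elimination, and asserting that ``in every case a third index carries a nonzero coefficient'' is precisely the claim to be proved.

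A secondary point: by working with an arbitrary two-element support $S\subseteq\{0,\dots,k-1\}$ you make the case analysis considerably larger than necessary. A trinomial subspace polynomial of $q$-degree $k$ with $k$-dimensional kernel must have nonzero coefficient of $x^{q^0}$ (otherwise $F=L^{q^{\ell_1}}$ for some $L$ of $q$-degree $<k$, so $\dim_{\F_q}\ker F<k$), so one may assume $S=\{0,\ell\}$ from the outset; moreover $b\neq 0$ because $\ker(ax+x^{q^k})$ is contained in a single multiplicative coset of $\F_{q^k}$ while $V_{g,\gamma}$ is not. With that reduction the whole combinatorial part collapses to: either some element of $\mathcal{J}$ lies outside $\mathcal{I}$, whence $b=0$ immediately, or $\mathcal{I}=\mathcal{J}$, which leaves only the two resonance systems above. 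I would encourage you to adopt this normalization and then actually carry out the two eliminations; as it stands, the proposal is a correct plan with the decisive steps missing.
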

\begin{proof}
Suppose that $F(x)=ax+bx^{q^\ell}+x^{q^k} \in \mathcal{L}_{n,q}$, with $0<\ell<k$, is a subspace polynomial with $\ker(F)=V_{g,\gamma}$, that is
\begin{equation}\label{eq:fu+gammagu=0} 
F(u+\gamma g(u))=0, \end{equation}
for any $u \in \fqk$. 
Note also that $b$ cannot be zero as $V_{g,\gamma}$ is not contained in any multiplicative coset of $\fqk$.
\eqref{eq:fu+gammagu=0} implies that
\begin{equation}\label{eq:trinzeropol} 
\begin{array}{ll}
G(u)=(a+1)u+(a\gamma +\gamma^{q^k})u^{q^i}+(a\gamma\delta +\delta \gamma^{q^k})u^{q^j}+b u^{q^\ell}+\\
b\gamma^{q^\ell}u^{q^{i+\ell}}+b\gamma^{q^\ell}\delta^{q^\ell}u^{q^{\ell+j}}=0, 
\end{array}
\end{equation}
for every $u \in \fqk$.
Hence, the left hand-side $G(u)$ of \eqref{eq:trinzeropol} can be seen as a polynomial in $u$ and \eqref{eq:trinzeropol} implies that this polynomial is the zero polynomial when is reduced modulo $u^{q^k}-u$.
To this aim, denote by 
\[\mathcal{I}=\{0,i,j\}\,\,\,\text{and}\,\,\,\mathcal{J}=\{\ell,i+\ell,j+\ell\},\]
which are seen both as subsets of $\mathbb{Z}/k\mathbb{Z}$.
We divide the proof in cases:\\
\textbf{Case 1}: $|\mathcal{I}\cap\mathcal{J}|\leq 2$.\\
In this case, we have that at least one among $\ell,i+\ell$ and $j+\ell$ is not in $\mathcal{I}$ and hence the relative coefficient in $G(u)$ is zero, that is $b=0$, a contradiction.\\
\textbf{Case 2}: $\mathcal{I}=\mathcal{J}$.\\
Note that, since $0<\ell<k$ then $i\ne \ell+i$ and $j\ne \ell+j$. Therefore, we only have two cases: 
either
\[i+\ell=0, j+\ell=i \,\,\,\text{and}\,\,\,\ell=j,\]
or 
\[i+\ell=j, j+\ell=0 \,\,\,\text{and}\,\,\, \ell=i.\]
\textbf{Case 2.1}: $i+\ell=0$, $j+\ell=i$ and $\ell=j$.\\
From \eqref{eq:trinzeropol} the coefficients of $G$ are
\begin{equation}\label{eq:systi,jcase21}
\left\{
\begin{array}{lll}
a+1+b \gamma^{q^\ell}=0,\\
a\gamma +\gamma^{q^k}+b\gamma^{q^\ell}\delta^{q^\ell}=0,\\
a\gamma\delta+\delta\gamma^{q^k}+b=0.
\end{array}
\right.
\end{equation}
Note that $a+1 \ne 0$, otherwise the first equation of the above system would imply $b=0$.
By multiplying the first equation of System \eqref{eq:systi,jcase21} by $\delta^{q^{\ell}}$ and the third one by $\gamma^{q^\ell}\delta^{q^\ell}$ we get
\[
\left\{
\begin{array}{lll}
(a+1)\delta^{q^\ell}+b \gamma^{q^\ell} \delta^{q^\ell}=0,\\
a\gamma +\gamma^{q^k}+b\gamma^{q^\ell}\delta^{q^\ell}=0,\\
a\gamma^{1+q^\ell}\delta^{1+q^\ell}+\delta^{1+q^\ell}\gamma^{q^k+q^\ell}+b\gamma^{q^\ell} \delta^{q^\ell}=0,
\end{array}
\right.
\]
which can be rewritten as follows
\[
\left\{
\begin{array}{lll}
b \gamma^{q^\ell} \delta^{q^\ell}=-(a+1)\delta^{q^\ell},\\
a\gamma +\gamma^{q^k}-(a+1)\delta^{q^\ell}=0,\\
a\gamma^{1+q^\ell}\delta^{1+q^\ell}+\delta^{1+q^\ell}\gamma^{q^k+q^\ell}-(a+1)\delta^{q^\ell}=0,
\end{array}
\right.
\]
and from the difference between the second equation multiplied by $\gamma^{q^\ell}\delta^{q^\ell+1}$ and the third equation we obtain
\[ a \gamma^{q^\ell}\delta^{2q^\ell+1}+\gamma^{q^\ell}\delta^{2q^\ell+1}=a \delta^{q^\ell}+\delta^{q^\ell}, \]
and hence
\[ \gamma^{q^\ell}\delta^{2q^\ell+1}=\delta^{q^\ell}, \]
a contradiction as $\gamma \notin \fqk$.\\
\textbf{Case 2.2}: $i+\ell=j, j+\ell=0 \,\,\,\text{and}\,\,\, \ell=i$.\\
Similar arguments to those of Case 2.1 can be performed to get again the contradiction.
\end{proof}

Arguing as in Corollary \ref{cor:x^qnotrin}, we obtain the following.

\begin{corollary}\label{cor:x^qnotrin}
Let $\gamma\in\fqn$ be a root of an irreducible polynomial of degree $\frac{n}{k}\geqslant 2$ over $\fqk$.
Let $V_{g,\gamma}=\lbrace u+g(u)\gamma:u\in\fqk\rbrace$ be an $\fq$-subspace of $\fqn$ where $g(u)= u^{q^l} + B u^{q^m}$ and $\gcd(k,m-l)=1$ with $0<l<m<k$. Then $V_{g,\gamma}$ is not semilinearly equivalent to the examples in \cite{otal2017cyclic,santonastaso2022linearized}.
\end{corollary}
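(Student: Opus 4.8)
The plan is to mimic, \emph{mutatis mutandis}, the proof of the corresponding statement for the monomial $V_{x^q,\gamma}$, using Proposition \ref{nontrinomio} in place of Theorem \ref{casomonomio}. We assume $B\neq 0$, the case $B=0$ being either trivial (if $\gcd(l,k)>1$ then $V_{g,\gamma}$ is $\F_{q^{\gcd(l,k)}}$-linear, hence not a Sidon space) or entirely analogous to the monomial $V_{x^q,\gamma}$. We take for granted two facts: each example of \cite{otal2017cyclic,santonastaso2022linearized} is, by construction, the kernel of a trinomial subspace polynomial of $q$-degree equal to its dimension; and $V_{g,\gamma}$ is genuinely $k$-dimensional over $\fq$, since $1$ and $\gamma$ are $\fqk$-linearly independent, so it admits a unique monic subspace polynomial $F\in\mathcal{L}_{n,q}$ of $q$-degree $k$.

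The core observation is that the number of monomials of the canonical subspace polynomial of a $k$-dimensional $\fq$-subspace is invariant under semilinear equivalence. Indeed, suppose toward a contradiction that $V_{g,\gamma}$ is semilinearly equivalent to some example $W$ of \cite{otal2017cyclic,santonastaso2022linearized}, and let $T(x)=\sum_{h\in S}t_h x^{q^h}$ be the monic subspace polynomial of $W$, so that $|S|=3$ and $t_h\neq 0$ for all $h\in S$. By Proposition \ref{prop:equivsubpol}, $V_{g,\gamma}$ is the kernel of $\lambda^{q^k}T^{\sigma}(\lambda^{-1}x)$ for some $\lambda\in\fqn^{*}$ and $\sigma\in\mathrm{Aut}(\fqn)$, and this polynomial is monic of $q$-degree $k$; by uniqueness it equals $F$. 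Hence $F(x)=\lambda^{q^k}T^{\sigma}(\lambda^{-1}x)=\sum_{h\in S}\lambda^{q^k}t_h^{\sigma}\lambda^{-q^h}x^{q^h}$, and since $\sigma$ is injective and $\lambda\neq 0$, every coefficient $\lambda^{q^k}t_h^{\sigma}\lambda^{-q^h}$ is nonzero. Thus $F$ has exactly three monomials, i.e. $F$ is a trinomial.

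To conclude, apply Proposition \ref{nontrinomio} with $i=l$, $j=m$ and $\delta=B$: it states precisely that the subspace polynomial of $V_{g,\gamma}$ is \emph{not} a trinomial, contradicting the previous paragraph. Therefore $V_{g,\gamma}$ is not semilinearly equivalent to any of the Sidon spaces of \cite{otal2017cyclic,santonastaso2022linearized}.

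The one delicate point I would write out carefully is the invariance step: one must check that $\lambda^{q^k}T^{\sigma}(\lambda^{-1}x)$ is monic of $q$-degree $k$ with kernel $V_{g,\gamma}$ (so that it equals $F$ by uniqueness) and that the cited trinomials can be put in the normalized shape used in the proof of Proposition \ref{nontrinomio} (monic, $q$-degree $k$, with a nonzero linear term); since rescaling a subspace polynomial to be monic does not alter its number of terms, this causes no difficulty. I do not foresee any genuine obstacle beyond this bookkeeping. Observe, finally, that $\gcd(k,m-l)=1$ is not used in the argument itself; it serves only to ensure, via Proposition \ref{Sidon caso i-j,k}, that $V_{g,\gamma}$ really is a Sidon space, so that the statement effectively separates the first family of Sidon spaces from the third.
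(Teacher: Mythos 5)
Your proposal is correct and follows essentially the same route as the paper, which simply combines Proposition \ref{prop:equivsubpol} with Proposition \ref{nontrinomio} (arguing as in the monomial case): the map $F\mapsto \lambda^{q^k}F^{\sigma}(\lambda^{-1}x)$ preserves the number of nonzero terms of the subspace polynomial, so no subspace semilinearly equivalent to $V_{g,\gamma}$ can be the kernel of a subspace trinomial. Your extra bookkeeping (uniqueness of the monic subspace polynomial, the nonzero linear term forced by strict $\F_q$-linearity) is exactly the implicit content of the paper's one-line argument.
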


\section{Conclusions and open problems}

In this paper we continued the study of Sidon spaces which are contained in the sum of two multiplicative cosets of a subfield $\fqk$ of the ambient space $\fqn$.
We give characterization results, which allowed us to check the Sidon space property in the smaller field $\fqk$ instead of the entire field $\fqn$, under the assumption that $[\fqn:\fqk]>2$.
As a consequence, we showed examples arising from scattered spaces/polynomials and others with a wide variety of algebraic behaviour. Then we conclude the paper by analyzing the natural equivalence that can be given on Sidon spaces via the connection with cyclic subspace codes. 

We point our some questions/open problems that we think might be of interest:

\begin{itemize}
    \item In Definition \ref{def:sidonpol}, we defined linearized polynomials satisfying the Sidon space property. An important task would be to investigate the exceptionality of this property,  that is to find families of linearized polynomials satisfying the Sidon space property for infinitely many extensions; see \cite{Bartoli:2020aa4} for a nice overview.
    As done for scattered polynomials, see e.g. \cite{bartoli2018exceptional,bartoli2021classification,ferraguti2021exceptional,bartoli2022towards}, a possible strategy is to translate the Sidon space property in terms of algebraic curves/function fields and then try to use heavy algebraic geometry machinery.
    \item In Corollary \ref{cor:doublepseudo}, we find an example of Sidon space with the aid of the direct sum. With the notation of Corollary \ref{cor:doublepseudo}, it is clear that if $2k \nmid m$ this example cannot be the examples described in Section \ref{sec:constructions}. When $2k \mid m$, it is not hard to show that it cannot be a Sidon space defined by a scattered polynomial, but we could not exclude that it can be the example defined by the binomial in Proposition \ref{Sidon caso i-j,k}. A possible way to show this is the following: up to equivalence and with the notation of Corollary \ref{cor:doublepseudo}, we may write the example of Corollary \ref{cor:doublepseudo} as $V_{g,\delta'}$ for some $\delta' \in \F_{q^m}$ and $g$ as in \cite[Theorem 4.20]{napolitano2022linear} and then a direct check of the equivalence with the examples in Proposition \ref{Sidon caso i-j,k} can be done (with non trivial computations).
    \item Another interesting step towards the study of Sidon spaces is to investigate those subspaces that are not contained in the sum of two multiplicative cosets of a field. Some results can be readapted but they need more discussions and we are currently working on it.
\end{itemize}

\bibliographystyle{abbrv}
\bibliography{biblio}

\noindent Chiara Castello, Olga Polverino, Paolo Santonastaso and Ferdinando Zullo,\\
Dipartimento di Matematica e Fisica,\\ 
Universit\`a degli Studi della Campania ``Luigi Vanvitelli'',\\ 
Viale Lincoln, 5,\\ 
I--\,81100 Caserta, Italy\\
E-mail: \{chiara.castello,olga.polverino, \\ paolo.santonastaso,ferdinando.zullo\}@unicampania.it\\
\\
Data sharing not applicable to this article as no datasets were generated or analysed during the current study.

\end{document}